\documentclass[anon,12pt]{colt2026} 

\usepackage{float}
\usepackage{tikz}
\usepackage{algorithm}
\usepackage{algorithmic}
\DeclareMathOperator*{\argmax}{argmax}
\DeclareMathOperator*{\argmin}{argmin}
\newtheorem{assump}{Assumption}
\newtheorem{fact}{Fact}

\newcommand{\real}{\mathbb{R}}
\newcommand{\cA}{\mathcal{A}}

\newcommand{\cC}{\mathcal{C}}

\newcommand{\cS}{\mathcal{S}}
\newcommand{\cF}{\mathcal{F}}
\newcommand{\cM}{\mathcal{M}}

\newcommand{\cR}{\mathcal{R}}
\newcommand{\cP}{P}

\newcommand{\cT}{\mathcal{T}}
\newcommand{\cX}{\mathcal{X}}

\newcommand\blfootnote[1]{%
  \begingroup
  \renewcommand\thefootnote{}\footnote{#1}%
  \addtocounter{footnote}{-1}%
  \endgroup
}

\newcommand*{\infn}[1]{\left\|{#1}\right\|_{\infty}}

\newcommand{\expec}{\mathbb{E}}

\DeclareMathOperator{\dom}{dom}
\DeclareMathOperator{\rint}{rint}

\title[Policy Gradient Algorithms in Average-Reward Multichain MDPs]{Policy Gradient Algorithms in Average-Reward Multichain MDPs}
\usepackage{times}

\coltauthor{%
 \Name{Jongmin Lee} \Email{dlwhd2000@snu.ac.kr}\\
 \addr Seoul National University
 \AND
 \Name{Ernest K. Ryu} \Email{eryu@math.ucla.edu}\\
  \addr UCLA
}

\begin{document}

\maketitle

\begin{abstract}%
While there is an extensive body of research analyzing policy gradient methods for discounted cumulative-reward MDPs, prior work on policy gradient methods for average-reward MDPs has been limited, with most existing results restricted to ergodic or unichain settings. In this work, we first establish a policy gradient theorem for average-reward multichain MDPs based on the invariance of the classification of recurrent and transient states. Building on this foundation, we develop refined analyses and obtain a collection of convergence and sample-complexity results that advance the understanding of this setting. In particular, we show that the proposed $\alpha$-clipped policy mirror ascent algorithm attains an $\epsilon$-optimal policy with respect to positive policies.
\end{abstract}

\begin{keywords}%
 average-reward multichain MDPs, visitation measure, recurrent and transient states, policy gradient theorem, policy mirror ascent, convergence and sample complexity analysis%
\end{keywords}

\section{Introduction}

Average-reward Markov decision processes (MDPs) provide a fundamental framework for modeling sequential decision-making problems in which the objective is to maximize long-term, steady-state performance. In the dynamic programming and reinforcement learning literature, both value-based and policy-based methods have been extensively studied in the average-reward setting. Surprisingly, however, since the seminal Policy Gradient Theorem of \citet{sutton1999}, a policy gradient theorem for general multichain MDPs has not been established. Existing analyses of policy gradient methods under the average-reward criterion are restricted to ergodic or unichain MDPs, leaving the convergence of policy gradient methods in multichain MDPs open.




\paragraph{Contribution.}
In this work, we study the convergence of policy gradient methods for average-reward multichain MDPs. Using the notion of recurrent-transient classification of states, we first establish a policy gradient theorem for multichain MDPs by introducing new visitation measures, which we term the \emph{recurrent visitation measure} and the \emph{transient visitation measure}. Building on this framework, we analyze the convergence of $\alpha$-clipped policy mirror ascent in the tabular setting and further extend the analysis to the generative model setting. Notably, we establish $\epsilon$-optimality guarantees with respect to positive policies for the general multichain MDPs, in both the tabular and generative model settings. 

\subsection{Prior works}
\paragraph{Average-reward MDPs.}
The setup of average-reward MDPs was first introduced in the dynamic programming literature by \cite{howard1960dynamic}, and \cite{blackwell1962discrete} established a theoretical framework for their analysis. In reinforcement learning (RL), average-reward MDPs were mainly considered in the sample-based setup where the transition matrix and reward are unknown \citep{mahadevan1996average, dewanto2020average}. For this setup, various methods were proposed: model-based methods \citep{jin2021towards, tuynman2024finding, zurek2024span}, value-based methods \citep{ wan2024convergence, bravo2024stochastic, chen2025non}, and policy gradient methods \citep{bai2024regret,murthy2023convergence, kumar2024global}. Specifically, sample complexity to obtain $\epsilon$-optimal under a generative model \citep{wang2017primal, zhang2023sharper, li2025stochastic, jin2024feasible, lee2025near} and  regret minimization framework \citep{burnetas1997optimal, Jaksch2010, zhang2019regret,boone2024achieving} have been actively studied on the theoretical side. 

\paragraph{Policy gradient methods.}
Policy gradient methods \citep{williams1992simple,sutton1999,konda1999actor,kakade2001natural} are foundational reinforcement learning algorithms, commonly implemented with deep neural networks for policy parameterization \citep{schulman2015trust,schulman2017proximal}. In line with their practical success, convergence and sample complexity of policy gradient variants have been extensively studied across settings \citep{shani2020adaptive, mei2020global, agarwal2021theory, cen2022fast, xiao2022convergence, bhandari2024global}.  

For the average-reward MDP, \cite{sutton1999, marbach2001simulation, baxter2000direct} establish policy gradient theorem in unichain MDP. More recently, \cite{murthy2023convergence} establishes global convergence of natural policy gradient under an irreducibility assumption on MDP, and \cite{kumar2024global} provides a  global convergence analysis of projected policy gradient in the tabular ergodic MDP. Regret guarantees for general parameterized policy gradient methods were studied by \cite{bai2024regret} and subsequently, \cite{ganesh2024order} improved sample complexity via variance reduction technique.
 In the unichain MDP, \cite{ganesh2025regret} analyzes a batched natural actor--critic algorithm and \cite{li2025stochastic} establishes sample-complexity guarantees for both generative and Markovian sampling models assuming mixing time.


In multichain and weakly communicating MDPs, however, there are no existing prior results on policy gradient methods. To the best of our knowledge, policy gradient theorem has not been established, and only model-based and value-based method exist \citep{ wei2020model, zurek2024span, lee2025optimal}.

\subsection{Preliminaries and notations}

\paragraph{Average-reward MDP.}
Let $\cM(\cX)$ be the space of probability distributions over a set $\cX$. Write $(\cS, \cA, P, r)$ to denote the infinite-horizon undiscounted MDP with finite state space $\cS$, finite action space $\cA$, transition matrix $P\colon \cS \times \cA \rightarrow \cM(\cS)$, and bounded reward $r\colon  \cS \times \cA \rightarrow [-R,R]$. Denote $\pi\colon \cS \rightarrow \cM(\cA)$ for a policy, 
\begin{align*}
    &J^{\pi}(s)=\liminf_{H\rightarrow \infty} \frac{1}{H}\expec_{\pi}\bigg[\sum^{H-1}_{h=0}  r(s_h, a_h) \,\Big|\, s_0=s\bigg],\\ &K^{\pi}(s,a)=\liminf_{H\rightarrow \infty} \frac{1}{H}\expec_{\pi}\bigg[\sum^{H-1}_{h=0}  r(s_h, a_h) \,\Big|\, s_0=s, a_0=a\bigg]
\end{align*}
for average-rewards of a given policy, and \begin{align*}
&V^\pi(s)=\lim _{H\rightarrow \infty} \frac{1}{H}\sum_{h=1}^{H} \expec_\pi\bigg[\sum^{h-1}_{i=0}\big(r^\pi(s_i)-J^{\pi}(s_i)\big) \,\Big|\, s_0=s\bigg]
\\&Q^\pi(s,a)=\lim _{H\rightarrow \infty} \frac{1}{H}\sum_{h=1}^{H} \expec_\pi \bigg[\sum^{h-1}_{i=0}\big(r(s_i,a_i)-K^{\pi}(s_i,a_i)\big)\,\Big|\, s_0=s, a_0=a\bigg]    
\end{align*}
for state and state-action relative value (under an aperiodicity assumption, averaging with respect to $H$ can be omitted in the definition \cite[Section 8.2]{10.5555/528623}), where $\expec_{\pi}$ denotes the expected value over all trajectories $(s_0, a_0, s_1, a_1, \dots, s_{H-1}, a_{H-1})$ induced by $P$ and $\pi$ and $r^{\pi}(s)=\mathbb{E}_{a \sim \pi(\cdot\,|\,s) }\left[r(s,a)\right]$ as the reward induced by policy $\pi$. Given $\mu \in \cM(\cS)$,  define $J_\mu^\pi= \expec_{s_0 \sim \mu} [J^\pi_{s_0}]$.  We say $J^{\star}$ is optimal average reward if $J^{\star}=\max_{\pi}J^{\pi}$ and optimal average reward always exists in multichain MDP \citep[Section 9.1]{10.5555/528623}. Denote  $
    \cP^{\pi}(s , s')=
\mathrm{Prob}(s\rightarrow s'\,|\,
a \sim \pi(\cdot\,|\,s), s'\sim P(\cdot\,|\,s,a))$
as the transition matrix induced by policy $\pi$.
We denote 
\[
P_{\star}= 
\lim_{H\rightarrow \infty}\frac{1}{H} \sum^{H-1}_{i=0}P^i
\]
for the Ces\`aro limit of a stochastic matrix $P$. (Ces\`aro limit of stochastic matrix always exists \cite[Theorem A.6]{10.5555/528623}). Then, by definition, we can write  \cite[Section 8.2]{10.5555/528623}
\[J^\pi=P^\pi_{\star}r^\pi, \qquad V^\pi=(I-P^\pi+P^{\pi}_\star)^{-1}(I-P_\star^\pi)r^\pi,
\qquad
K^\pi=P J^\pi,
\qquad
Q^\pi=r+P V^\pi-K^\pi.
\]
Lastly, for every policy $\pi$, it is known that $V^\pi$ and $ J^\pi$ satisfy the following \emph{Bellman equations} \citep[Theorem~8.2.6] {10.5555/528623}:    
\begin{align*}
  &P^\pi J^\pi=J^\pi , \qquad r^\pi+ P^\pi V^\pi = J^\pi+V^\pi.
\end{align*} 

    
    
\begin{figure}
    \vspace{-0.1in}
    \centering
\begin{tikzpicture}[scale=0.9]
       \draw[thick, rounded corners] (-3.7, 2.4) rectangle (3.7, -1.3);
    \node at (0, 1.9) {\fontsize{10.5}{5}\selectfont multichain $=$ general };
    
    \draw[thick, rounded corners] (-2.9, 1.5) rectangle (2.9, -1.2);
    \node at (0, 1.05) {\fontsize{10.5}{5}\selectfont weakly communicating };
    
    \draw[thick, rounded corners] (-2.1, 0.6) rectangle (2.1, -1.1);
    \node at (0, 0.2) {\fontsize{10.5}{5}\selectfont unichain};
    
    \draw[thick, rounded corners] (-1.4, -0.2) rectangle (1.4, -1.0);
    \node at (0, -0.6) {\fontsize{10.5}{5}\selectfont ergodic};
\end{tikzpicture}
    \caption{The classification of MDPs.
    }
    \label{fig:class_MDP}
\end{figure}
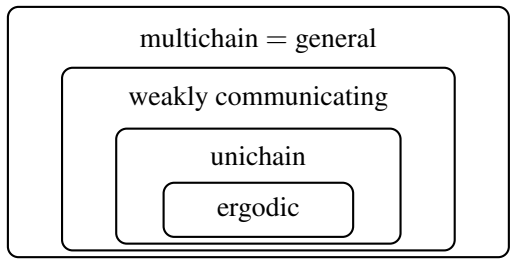
\paragraph{Classification of MDPs.}
  MDPs are classified as follows by the structure of transition matrices. (For definitions of basic concepts of MDPs such as irreducible class, communicating class, accessibility, etc., please refer to \citet[Appendix A.2]{10.5555/528623} and \citet[Section 3]{bremaud2013markov}.)

An MDP is \emph{ergodic} if the transition matrices induced by every policy $\pi$ have a single recurrent class and are aperiodic.     MDP is \emph{unichain} if the transition matrix corresponding to every deterministic policy consists of a single irreducible recurrent class plus a possibly empty set of transient states. MDP is \emph{weakly communicating} if there exists a closed set of states where each state in that set is accessible from every other state in that set under some deterministic policy, plus a possibly empty set of states which is transient under every policy. MDP is \emph{multichain} if the transition matrix corresponding to any deterministic policy contains one or more irreducible recurrent classes. 

 MDP is unichain if MDP is ergodic, MDP is weakly communicating if MDP is unichain, and MDP is multichain if MDP is weakly communicating. Since every MDP is multichain, we use the expressions \emph{multichain} and \emph{general} interchangeably. 

 If MDP is unichain,  for every policy $\pi$, there exists a unique stationary distribtution $g^\pi \in \cM(\cS)$ satisfying $(g^\pi)^\top P^\pi=g^\pi$  \cite[Theorem~8.3.2]{10.5555/528623}.  If MDP is ergodic, for every policy $\pi$, $g^\pi$ is strictly positive  \cite[Proposition 1.19]{levin2017markov}. If MDP is unichain, for every policy $\pi$, $J^\pi$ is a uniform constant vector, i.e.,
$J^\pi = c\mathbf{1}$ for some $c\in \mathbb{R}$ where $\mathbf{1}\in \mathbb{R}^n$ is the vector with entries all $1$.   

\vspace{0.1in}
 In the next section, we discuss the continuity of the mapping $\pi\mapsto J^\pi$ in multichain MDP. Since $|\cS|$ and $|\cA|$ are finite, we can identify $\pi$ and $J^\pi$ as finite-dimensional vectors, namely, as $\pi\in \mathbb{R}^{|\cS|\times |\cA|}$ and $J^\pi\in \mathbb{R}^{|\cS|}$.
Therefore, continuity of $\pi\mapsto J^\pi$ can be interpreted as continuity of the mapping from $\mathbb{R}^{|\cS|\times |\cA|}$ to $\mathbb{R}^{|\cS|}$ under the usual metric. We define the class of policies
\begin{alignat*}{3}
\Pi = \text{set of all policies}=\cM(\cA)^\cS, \qquad \Pi_{+} = \{ \pi\in \Pi \,|\, \pi (a \,|\, s) >0 \text{ for all } s,a\},
\end{alignat*}
so that $\Pi_+$ is the (relative) interior of $\Pi$. Also define $\cM_+(\cX)$ as the space of probability distributions with full support over a set $\cX$.

\section{Recurrent-transient theory of policy gradients}\label{sec:recurrent_transeint}
In this section, we apply the recurrent-transient theory of Markov chains to the average-reward multichain MDP setting and introduce the notions of the \emph{recurrent visitation measure} and the \emph{transient visitation measure}. Building on these concepts, we establish a policy gradient theorem for average-reward multichain MDPs.

\subsection{Recurrent-transient classification of states}


\begin{definition}
Given a policy $\pi\in \Pi$, 
a state $s\in \cS$ is recurrent if its return time starting from $s$ is finite with probability $1$. Otherwise, $s$ is transient.
\end{definition}
Equivalently, if $n_s$ is the random variable representing the number of visits to state $s$ starting from $s$, then $s$ is recurrent if and only if $\expec_\pi[n_s]=\sum_{k=0}^{\infty} (P^\pi)^{k}(s, s)=\infty,
$ and otherwise it is transient \cite[Theorem 3.1.3]{bremaud2013markov}. 

Let $\pi \in \Pi$. For a given $P^\pi$, the states can be classified into recurrent and transient states, and the Markov chain can be canonically represented as follows \cite[Appendix A.2]{10.5555/528623}:

$$
P^{\pi} =
\begin{bmatrix}
R^{\pi}_1 & 0 & 0 & \cdot & \cdot & 0 \\
0 & R^{\pi}_2  &0 & \cdot & \cdot & 0 \\
 &   & & \cdot & \cdot & 0 \\
0 & 0 & 0  & \cdot & R^{\pi}_m & 0 \\
S^{\pi}_1 & S^{\pi}_2 & \cdot  & \cdot & S^{\pi}_m &T^{\pi}
\end{bmatrix}, \qquad P^\pi_\star =
\begin{bmatrix}
R^{\pi}_{1,\star} & 0 & 0 & \cdot & \cdot & 0 \\
0 & R^{\pi}_{2,\star}  &0 & \cdot & \cdot & 0 \\
0 &   & & \cdot & \cdot & 0 \\
0 & 0 & 0  & \cdot &  R^{\pi}_{m,\star} & 0 \\
S^{\pi}_{1,\star} & S^{\pi}_{2,\star} & \cdot  & \cdot & S^{\pi}_{m,\star} & 0
\end{bmatrix},
$$
where $R_1^\pi,\dots,R_m^\pi,T^\pi,S_1^\pi,\dots,S_m^\pi$ represent transition probabilities among the recurrent states, among the transient states, and from transient to recurrent states, respectively. (Note that a multichain MDP can have $m\ge 1$ recurrent classes whereas a unichain MDP must have a single recurrent class). $P^\pi_\star$ is the Ces\`aro limit of $P^\pi$, and it is known that $R^{\pi}_{i,\star} = \mathbf{1} (g^\pi_i)^\top$ where $g^\pi_i$ is the unique stationary distribution of the probability matrix $R^{\pi}_{i}$ and $S^\pi_{i,\star} = (I-T^\pi)^{-1}S^\pi_i R^{\pi}_{i,\star}$ for $i=1,\dots,m$ \cite[Appendix A.4]{10.5555/528623}. 

Such canonical representations exist for any $\pi \in \Pi$, but the recurrent-transient classification of states and the structure of the corresponding canonical decomposition of $P^\pi$ may vary as a function of $\pi\in\Pi$. However, the recurrent-transient classification remains invariant for all $\pi \in \Pi_+$. 
\begin{fact}\protect{\cite[Proposition~1]{lee2025policy}}\label{fact:classification}
The recurrent-transient classification of the states does not depend on the choice of $\pi\in \Pi_+$. More specifically, the state set $\mathcal{S}$ decomposes into the transient states $\mathcal{T}$ and $m$ recurrent classes $\mathcal{R}_1,\dots,\mathcal{R}_m$ for some $m\in \mathbb{N}$, and this decomposition is the same across all $\pi\in \Pi_+$. 
\end{fact}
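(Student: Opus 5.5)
The plan is to show that for a finite Markov chain the recurrent–transient classification, and the partition into recurrent classes, depends only on the \emph{support} (the zero pattern) of the transition matrix $P^\pi$. Then it suffices to check that this support is the same for every $\pi\in\Pi_+$. For the second part, note that
\[
P^\pi(s,s')=\sum_{a\in\cA}\pi(a\,|\,s)P(s'\,|\,s,a).
\]
Since all terms are nonnegative and $\pi(a\,|\,s)>0$ for every $a$, we get $P^\pi(s,s')>0$ if and only if $P(s'\,|\,s,a)>0$ for some $a\in\cA$. This condition does not involve $\pi$, so all $\pi\in\Pi_+$ induce the same directed graph $G$ on $\cS$, with an edge $s\to s'$ exactly when $P^\pi(s,s')>0$.

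For the first part, I will use the standard graph characterization of recurrence for finite chains (e.g.\ \citet[Section 3]{bremaud2013markov}, \citet[Appendix A.2]{10.5555/528623}). Write $s\leadsto s'$ if there is a directed path from $s$ to $s'$ in $G$; this is equivalent to $(P^\pi)^k(s,s')>0$ for some $k\ge 0$. Communicating classes are the equivalence classes of mutual accessibility, so they are determined by $G$ alone. In a finite chain, a state $s$ is recurrent if and only if its communicating class is closed, meaning every $s'$ with $s\leadsto s'$ also satisfies $s'\leadsto s$.

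The key step is to justify this characterization from the definition given in the paper, namely $\sum_k (P^\pi)^k(s,s)=\infty$. There are two directions.
\begin{itemize}
\item If the class $C$ of $s$ is closed, restrict the chain to $C$. Finiteness forces some state in $C$ to be visited infinitely often with positive probability, hence to be recurrent. Recurrence is a class property, so $s$ is recurrent.
\item If $C$ is not closed, there is an $s'$ with $s\leadsto s'$ but $s'\not\leadsto s$. Then with positive probability the chain leaves $s$ and never returns, so the return time is infinite with positive probability and $s$ is transient.
\end{itemize}
Both arguments use only which entries of $P^\pi$ are positive. So the set $\cT$ of transient states and the recurrent classes $\cR_1,\dots,\cR_m$ (the closed communicating classes of $G$) are identical for all $\pi\in\Pi_+$, and $m$ is the number of such classes. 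It is nonzero because any finite chain has at least one closed class.

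I expect the main obstacle to be stating the equivalence "recurrent $\iff$ closed communicating class" cleanly enough that it visibly uses only the zero pattern of $P^\pi$. I would cite it from \citet{bremaud2013markov} instead of reproving it, and include the short escape-probability argument for the transient direction.
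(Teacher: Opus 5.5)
Your proof is correct. Every $\pi\in\Pi_+$ puts positive mass on every action, so the support of $P^\pi(s,\cdot)$ is $\{s' : P(s'\,|\,s,a)>0 \text{ for some } a\}$ for all such $\pi$; in a finite chain the transient set and the recurrent classes (the closed communicating classes) depend only on this support, and your two directions of that characterization are the standard ones. The paper does not prove this fact itself but cites \citet[Proposition~1]{lee2025policy}; your support-based argument is the standard reasoning behind that result, and it is the same accessibility argument the paper invokes in its weakly communicating appendix.
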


We further clarify. For any $\pi\in \Pi_+$, the recurrent-transient classification is determined by the transition kernel $P$, not on the particular choice of $\pi\in \Pi_+$. While a policy $\pi\in \Pi\setminus\Pi_+$ (a policy that assigns zero probability to some actions) may induce a different classification, the algorithms we consider (as well as deep RL algorithms employing a softmax output layer) search over $\Pi_+$ rather than the full set $\Pi$. 


By Fact~\ref{fact:classification}, we therefore fix, for any $\pi \in \Pi_+$, the collection of recurrent classes $\bigcup_{i=1}^m \mathcal{R}_i \subset \mathcal{S}$ and the transient set $\mathcal{T}\subset \mathcal{S}$, where $m$ denotes the number of recurrent classes. We also write
\[
\mathcal{R} := \bigcup_{i=1}^m \mathcal{R}_i
\]
for the set of all recurrent states.

\subsection{Continuity of $J^\pi$ on $\Pi_+$}
For multichain MDPs, the average-reward $J^\pi(s)$ can be a discontinuous function of the policy $\pi\in \Pi$ as the counterexample in \cite[Section 8]{schweitzer1968perturbation} demonstrates. However, we do have continuity on $\pi\in \Pi_+$.



\begin{lemma}\label{lem:continuity}
In a multichain MDP, the mappings $\pi \mapsto J^\pi$ and $\pi \mapsto J^\pi_\mu$
(where $J^\pi_\mu = \mathbb{E}_{s\sim\mu}[J^\pi(s)]$)
are continuous on $\Pi_+$ for any fixed $\mu \in \mathcal{M}(\mathcal{S})$.
\end{lemma}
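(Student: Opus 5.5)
The plan is to write $J^\pi = P^\pi_\star r^\pi$ through the canonical decomposition, which by Fact~\ref{fact:classification} has the same block structure for every $\pi\in\Pi_+$, and then show that each block of $P^\pi_\star$ depends continuously on $\pi$. Since $r^\pi(s)=\sum_a \pi(a\,|\,s) r(s,a)$ and $P^\pi(s,s')=\sum_a \pi(a\,|\,s)P(s'\,|\,s,a)$ are linear, hence continuous, in $\pi$, it suffices to prove that $\pi\mapsto P^\pi_\star$ is continuous on $\Pi_+$. Continuity of $J^\pi_\mu=\mu^\top J^\pi$ then follows immediately, because it is a fixed linear functional of $J^\pi$.

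Fix the common decomposition $\mathcal{S}=\mathcal{R}_1\cup\dots\cup\mathcal{R}_m\cup\mathcal{T}$. For $\pi\in\Pi_+$ the blocks $R^\pi_i$, $S^\pi_i$ and $T^\pi$ are submatrices of $P^\pi$ indexed by fixed index sets, so they are continuous in $\pi$. The Ces\`aro limit has the form $R^\pi_{i,\star}=\mathbf{1}(g^\pi_i)^\top$ and $S^\pi_{i,\star}=(I-T^\pi)^{-1}S^\pi_iR^\pi_{i,\star}$. We therefore need two facts.

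\begin{enumerate}
\item \textbf{Continuity of the stationary distributions.} Since $R^\pi_i$ is irreducible for every $\pi\in\Pi_+$, $g^\pi_i$ is the unique solution of $(g_i)^\top(I-R^\pi_i)=0$, $(g_i)^\top\mathbf{1}=1$. Equivalently, $(g_i^\pi)^\top = e_1^\top M(\pi)^{-1}$, where $M(\pi)$ is obtained from $I-R^\pi_i$ by replacing one column with $\mathbf{1}$. Uniqueness of the solution means $M(\pi)$ is invertible. Matrix inversion is continuous on invertible matrices (Cramer's rule), so $g^\pi_i$ is continuous. Alternatively, one can argue by compactness: any limit point of $g^{\pi_k}_i$ is a stationary distribution of $R^\pi_i$, hence equals $g^\pi_i$.
\item \textbf{Continuity of $(I-T^\pi)^{-1}$.} Because every state of $\mathcal{T}$ is transient under $\pi$, the spectral radius of $T^\pi$ is less than one, so $I-T^\pi$ is invertible for every $\pi\in\Pi_+$. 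Again by continuity of inversion, $(I-T^\pi)^{-1}$ is continuous on $\Pi_+$.
\end{enumerate}

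Combining these, every block of $P^\pi_\star$ is a product of continuous matrix-valued functions of $\pi$, so $P^\pi_\star$, and hence $J^\pi=P^\pi_\star r^\pi$, is continuous on $\Pi_+$.

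The main obstacle, and the only place where the restriction to $\Pi_+$ matters, is making sure the invertibility claims hold at every point of $\Pi_+$ rather than just at a given policy. This is exactly what Fact~\ref{fact:classification} provides: the recurrent classes stay irreducible and the transient set stays transient throughout $\Pi_+$. At boundary policies this can fail. For example, $T^\pi$ may acquire a recurrent class, so $I-T^\pi$ becomes singular, which is the source of the discontinuity in the example of \cite{schweitzer1968perturbation}. To justify that $\rho(T^\pi)<1$, I would note that $(T^\pi)^k(s,s')$ equals the probability of staying in $\mathcal{T}$ for $k$ steps. This probability tends to zero for transient states of a finite chain, so $\sum_k (T^\pi)^k$ converges and equals $(I-T^\pi)^{-1}$.
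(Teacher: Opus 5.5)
Your proof is correct and follows essentially the paper's route: in the proof of Theorem~\ref{thm::policy_gd}, from which the paper says Lemma~\ref{lem:continuity} follows, it uses the $\Pi_+$-invariant canonical decomposition to reduce to the blocks $R^\pi_{i,\star}=\mathbf{1}(g_i^\pi)^\top$ and $S^\pi_{i,\star}=(I-T^\pi)^{-1}S^\pi_iR^\pi_{i,\star}$, and cites differentiability of the stationary distributions and of $(I-T^\pi)^{-1}$. You instead prove the needed continuity directly, via the invertible bordered system for $g_i^\pi$ and continuity of matrix inversion, which makes your argument self-contained and avoids the aperiodicity caveat the paper must address when invoking \cite{marbach2001simulation}.
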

In other words, we have continuity on the (relative) interior of $\Pi$ and the discontinuity described in \cite[Section 8]{schweitzer1968perturbation} can arise only on the (relative) boundary of $\Pi$. In our policy gradient methods, we restrict the search to policies $\pi \in \Pi_+$.



Next, define
\[
J_{+,\mu}^\star=
\sup_{\pi \in \Pi_+} \mathbb{E}_{s\sim\mu} [J^\pi(s)],\qquad
 J_{\mu}^\star=
 \max_{\pi \in \Pi}\mathbb{E}_{s\sim\mu} [J^\pi(s)],
\]
where $\mu$ is an initial state distribution.
By definition, $J_{\mu}^\star \ge J_{+,\mu}^\star$. Since our policy gradient methods search over $\Pi_+$, they should be thought of as optimizing for $J_{+,\mu}^\star$. Thus, given $\mu \in \cM(\cS)$, we say a policy $\pi$ is an $\epsilon$-optimal policy with respect to positive policies if $J_{+,\mu}^{\star}-J_\mu^{\pi} \le \epsilon$.




\subsection{Visitation measures}
In prior works, the state visitation measure for average-reward unichain MDPs under a policy $\pi$ is defined as the stationary distribution, which is independent of the initial state $s_0$. In contrast, for multichain MDPs, the stationary distribution generally depends on the starting state, and it might not be unique. To address this, we introduce new visitation measures for the multichain setting.

First, define the \emph{recurrent visitation measure} with respect to policy $\pi$ and starting state $s_0$ as
\[d_{s_0}^\pi (s) = \lim_{H\rightarrow \infty} \frac{1}{H}\sum_{h=0}^{H-1}\mathrm{Prob}(s_h = s \mid s_0;\, P^\pi)=e_{s_0}^{\intercal}P_\star^\pi e_{s},
\]
where $e_s$ is the $s$-th unit vector.
Likewise, define the \emph{recurrent visitation measure} with respect to policy $\pi$ and starting state distribution $\mu$ as
\[
d^\pi_{\mu}= \expec_{s_0 \sim \mu} [d^\pi_{s_0}].
\]
Note that, for all transient states $s$, we have $d^\pi_{s_0}(s)=d^\pi_{\mu}(s)=0$.



Next, following \citep{lee2025policy}, define the \emph{transient matrix}:
\[
\bar{T}^\pi=\begin{bmatrix}
    0&0\\0&T^\pi
\end{bmatrix}, 
\qquad
\text{i.e.,}
\qquad
\bar{T}^\pi(s_1,s_2)=
\left\{\begin{array}{ll}
P^\pi(s_1,s_2)&\text{if $s_1,s_2$ are both transient}\\
0&\text{otherwise.}
\end{array}\right.
\]
\begin{fact}\cite[Lemma 8.3.20]{berman1994nonnegative}\label{fact:trans_spec}
Spectral radius of $\bar{T}^\pi$ is strictly less than $1$.
\end{fact}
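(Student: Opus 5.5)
The statement is the classical fact that the transient block $\bar{T}^\pi$ has spectral radius strictly less than $1$. I would prove it by showing $(\bar{T}^\pi)^k\to 0$ entrywise as $k\to\infty$, since for a finite nonnegative matrix this forces $\rho(\bar{T}^\pi)<1$.

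The rows and columns indexed by recurrent states vanish, so $\rho(\bar{T}^\pi)=\rho(T^\pi)$ and it suffices to work with $T^\pi$, the restriction of $P^\pi$ to the transient set $\mathcal{T}$ (for $\pi\in\Pi_+$ this set is fixed by Fact~\ref{fact:classification}). For $s,s'\in\mathcal{T}$, $(T^\pi)^k(s,s')$ is the probability that the chain started at $s$ stays in $\mathcal{T}$ for $k$ steps and ends at $s'$. Hence
\[
(T^\pi)^k(s,s')\le (P^\pi)^k(s,s').
\]

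Next I use transience. For a transient $s'$, the expected number of visits to $s'$ is finite from any starting state $s$: it is at most $\sum_k (P^\pi)^k(s',s')<\infty$, by the strong Markov property at the first hitting time of $s'$. So $\sum_k (P^\pi)^k(s,s')<\infty$. The same bound then gives
\[
\sum_k (T^\pi)^k(s,s')<\infty,
\]
so the Neumann series $\sum_k (T^\pi)^k$ converges entrywise. In particular $(T^\pi)^k\to0$.

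To finish, apply Perron--Frobenius to the nonnegative matrix $T^\pi$: there is a nonnegative eigenvector $v\neq0$ with $T^\pi v=\rho v$, where $\rho=\rho(T^\pi)$. Then $(T^\pi)^k v=\rho^k v\to0$, which forces $\rho<1$. Alternatively, Gelfand's formula gives $\rho\le\|(T^\pi)^k\|^{1/k}$, and this is less than $1$ once $\|(T^\pi)^k\|<1$.

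The main obstacle is the finiteness step from an arbitrary starting state $s$ rather than from $s'$ itself. This is a standard strong-Markov argument, and with it everything else is routine. A possible alternative route is to note that from each transient state some recurrent state is reachable, so a uniform escape probability $\delta>0$ within $|\mathcal{T}|$ steps gives $\|(T^\pi)^{|\mathcal{T}|}\|_\infty\le 1-\delta$. I would only use it if it is cleaner to present.
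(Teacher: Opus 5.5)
Your proof is correct. Each step holds: the bound $(T^\pi)^k(s,s')\le (P^\pi)^k(s,s')$, the strong-Markov estimate $\sum_k (P^\pi)^k(s,s')\le \sum_k (P^\pi)^k(s',s')<\infty$ for transient $s'$, and the passage from $(T^\pi)^k\to 0$ to $\rho(T^\pi)<1$.

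The paper gives no proof of this fact; it simply imports it from Berman--Plemmons. So your argument is a self-contained substitute, not a variant of an in-paper proof.

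A common alternative is purely matrix-theoretic. Put $T^\pi$ in Frobenius normal form. Each diagonal block is either zero or irreducible and substochastic with a strictly deficient row sum, since no transient class is closed. Such a block has spectral radius $<1$ by irreducible Perron--Frobenius, and block triangularity finishes the proof. That route needs no probability but does need irreducible Perron--Frobenius theory.

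Your route works directly from the return-time characterization of transience that the paper quotes from Br\'emaud. It also proves what the paper uses immediately afterwards:
\begin{enumerate}
\item The series $\sum_i(\bar{T}^\pi)^i$ converges, and its $(s,s')$ entry is the expected number of visits to $s'$ from $s$. This is exactly the interpretation behind $\delta^\pi_{s_0}$.
\item $(\bar{T}^\pi)^k\to 0$, which is what makes the transient half-life $t_{\frac{1}{2},\pi}$ finite.
\end{enumerate}
Your inequality between $(T^\pi)^k$ and $(P^\pi)^k$ is in fact an equality on $\mathcal{T}\times\mathcal{T}$, because the canonical form is block lower triangular. Your escape-probability variant would additionally give the quantitative bound $\|(T^\pi)^{|\mathcal{T}|}\|_\infty\le 1-\delta$.

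One simplification: Perron--Frobenius is not needed in your last step. For any eigenpair $(\lambda,v)$ with $v\neq 0$, you have $\lambda^k v=(T^\pi)^k v\to 0$, which already forces $|\lambda|<1$.
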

By Fact~\ref{fact:trans_spec} and the classical Neumann series argument, we have $(I-\bar{T}^\pi)^{-1} = \sum_{i=0}^\infty (\bar{T}^\pi)^i$. 
Now, we define the \emph{transient visitation measure} with respect to policy $\pi$ and starting state $s_0$ as
\[\delta^\pi_{s_0}(s)
= \sum_{h=0}^{\infty}\mathrm{Prob}(s_h = s \mid s_0;\, \bar{T}^\pi)
= e_{s_0}^{\intercal}(I-\bar{T}^\pi)^{-1}e_{s},
\]
and the \emph{transient visitation measure} with respect to policy $\pi$ and starting state distribution $\mu$ as
\[
\delta^\pi_{\mu}= \expec_{s_0 \sim \mu} [\delta^\pi_{s_0}].
\]
We point out that this transient visitation measure is not a probability measure.
Also, importantly, $\max_{s, s_0 \in \cS} \delta^\pi_{s_0}(s) < \infty$ by Fact \ref{fact:trans_spec}.



With these recurrent and transient visitation measures, we can obtain the following performance difference lemma in the average-reward multichain MDP setup, which will be crucially used in the analysis of policy gradient algorithms.


\begin{lemma}[Performance difference lemma]\label{lem:pdl} Consider a multichain MDP.   For $\pi, \pi' \in \Pi_+$ and $\mu\in\mathcal{M}_+(\mathcal{S})$,  
\begin{align*}
\!\!\!\!\!\!
J_\mu^{\pi} - J_\mu^{\pi'}
&=    \sum_{\substack{s \in \cR\\a \in \cA}}
d^{\pi}_{\mu}(s) \big(\pi(a \mid s)-\pi'(a \mid s)\big)Q^{\pi'}(s,a) +
\sum_{\substack{s \in \cT\\a \in \cA}}\delta^{\pi}_{\mu}(s)\big(\pi(a \mid s)-\pi'(a \mid s)\big) K^{\pi'}(s,a).
\end{align*}
\end{lemma}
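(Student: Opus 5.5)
The plan is to take $J^\pi_\mu-J^{\pi'}_\mu=\mu^\top(J^\pi-J^{\pi'})$ and insert the intermediate vector $P^\pi_\star J^{\pi'}$:
\[
J^\pi-J^{\pi'}=\big(J^\pi-P^\pi_\star J^{\pi'}\big)+\big(P^\pi_\star J^{\pi'}-J^{\pi'}\big).
\]
I will show that the first bracket produces the recurrent sum and the second the transient sum.

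The main structural input is Fact~\ref{fact:classification}. Since $\pi,\pi'\in\Pi_+$, the chains $P^\pi$ and $P^{\pi'}$ share the same transient set $\cT$ and the same closed recurrent classes $\cR_1,\dots,\cR_m$. Hence $P^\pi$ never leaves $\cR_i$ once inside it. Moreover, $J^{\pi'}$ is constant on each $\cR_i$, equal to $(g^{\pi'}_i)^\top r^{\pi'}$ on that class, as read off from the block form of $P^{\pi'}_\star$. Two consequences follow.
\begin{enumerate}
\item For $s\in\cR_i$ we get $(P^\pi J^{\pi'})(s)=J^{\pi'}(s)$.
\item Likewise $(P^\pi_\star J^{\pi'})(s)=\sum_t g^\pi_i(t)J^{\pi'}(t)=J^{\pi'}(s)$.
\end{enumerate}

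For the first bracket, I use $Q^{\pi'}=r+PV^{\pi'}-PJ^{\pi'}$ and the Bellman equations $r^{\pi'}+P^{\pi'}V^{\pi'}=J^{\pi'}+V^{\pi'}$ and $P^{\pi'}J^{\pi'}=J^{\pi'}$. Together these give
\[
\sum_a\big(\pi(a\mid s)-\pi'(a\mid s)\big)Q^{\pi'}(s,a)=\big(r^\pi+P^\pi V^{\pi'}-P^\pi J^{\pi'}-V^{\pi'}\big)(s).
\]
Applying $P^\pi_\star$ and using $P^\pi_\star P^\pi=P^\pi_\star$, the $V^{\pi'}$ terms cancel. The result is $P^\pi_\star r^\pi-P^\pi_\star J^{\pi'}=J^\pi-P^\pi_\star J^{\pi'}$. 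Since the columns of $P^\pi_\star$ indexed by transient states vanish, $\mu^\top P^\pi_\star$ is supported on $\cR$ and equals $d^\pi_\mu$. This yields exactly the first sum in the lemma.

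For the second bracket, let $w=P^\pi_\star J^{\pi'}-J^{\pi'}$; by consequence 2 above, $w$ vanishes on $\cR$. Define $z(s)=\sum_a(\pi(a\mid s)-\pi'(a\mid s))K^{\pi'}(s,a)=(P^\pi J^{\pi'}-J^{\pi'})(s)$, which vanishes on $\cR$ by consequence 1. I then use $P^\pi P^\pi_\star=P^\pi_\star$ to get $P^\pi w=P^\pi_\star J^{\pi'}-P^\pi J^{\pi'}$. Because $w_\cR=0$, the transient rows of $P^\pi w$ equal $T^\pi w_\cT$. Subtracting gives $w_\cT-T^\pi w_\cT=z_\cT$, so $(I-\bar T^\pi)w=z$ as full vectors, both sides being zero on $\cR$. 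By Fact~\ref{fact:trans_spec} the matrix $I-\bar T^\pi$ is invertible, so $w=(I-\bar T^\pi)^{-1}z$. Then $\mu^\top w=\sum_{s\in\cT}\delta^\pi_\mu(s)z(s)$, which is the second sum.

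I expect the main obstacle to be the second bracket, specifically justifying that $J^{\pi'}$ is constant on each recurrent class of $\pi$. This is where equality of the classifications under $\pi$ and $\pi'$ is essential, and the identity fails for policies outside $\Pi_+$. The full-support assumption on $\mu$ does not seem to be needed by this argument.
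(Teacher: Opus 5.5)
Your proof is correct and follows the paper's argument: the same split $J^\pi-J^{\pi'}=P^\pi_\star(J^\pi-J^{\pi'})+(P^\pi_\star-I)J^{\pi'}$, with Fact~\ref{fact:classification} used, as in the paper, to make $J^{\pi'}$ constant on the recurrent classes of $\pi$. The differences are local. In the recurrent term you cancel the $K^{\pi'}$ contribution using $P^\pi_\star P^\pi=P^\pi_\star$ and $P^{\pi'}J^{\pi'}=J^{\pi'}$, where the paper uses Lemma~\ref{lem:con}. In the transient term you derive $(I-\bar T^\pi)w=z$ from $P^\pi P^\pi_\star=P^\pi_\star$, where the paper uses the block formula $S^\pi_{i,\star}=(I-T^\pi)^{-1}S^\pi_iR^\pi_{i,\star}$; and you are right that full support of $\mu$ is not needed.
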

We note that Lemma~\ref{lem:pdl} can be viewed as a generalization of the following performance difference lemma for the unichain setup. 
\begin{corollary}{\cite[Lemma~4.1]{even2009online}}\label{cor:pdl}
Consider a unichain MDP.    For $\pi, \pi' \in \Pi_+$ and $\mu\in \mathcal{M}_+(\mathcal{S})$, 
\begin{align*}
J_\mu^{\pi} - J_\mu^{\pi'}
&=    \sum_{s \in \cR}
\sum_{a \in \cA}
d^{\pi}_{\mu}(s) \big(\pi(a \mid s)-\pi'(a \mid s)\big)Q^{\pi'}(s,a). 
\end{align*}
\end{corollary}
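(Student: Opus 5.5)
The plan is to obtain Corollary~\ref{cor:pdl} as a direct specialization of Lemma~\ref{lem:pdl}. In that lemma, the second sum involves $K^{\pi'}(s,a)$ for transient $s$. So the task is to show that this sum vanishes in the unichain case.

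First, I will use the fact recalled in the preliminaries: in a unichain MDP, for every policy $\pi'$, $J^{\pi'} = c\mathbf{1}$ for some constant $c\in\real$. Since $K^{\pi'} = PJ^{\pi'}$ and each row $P(\cdot\mid s,a)$ is a probability distribution, this gives $K^{\pi'}(s,a) = c$ for all $(s,a)$.

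Next, fix a transient state $s\in\cT$. The inner sum over actions in the second term of Lemma~\ref{lem:pdl} becomes
\[
\delta^{\pi}_{\mu}(s)\, c \sum_{a\in\cA}\big(\pi(a\mid s)-\pi'(a\mid s)\big).
\]
This is zero because both $\pi(\cdot\mid s)$ and $\pi'(\cdot\mid s)$ sum to one. The finiteness of $\delta^\pi_\mu(s)$, guaranteed by Fact~\ref{fact:trans_spec}, ensures the product is a genuine zero and not an indeterminate form. Summing over $s\in\cT$ kills the whole second term, leaving exactly the first term, which is the claimed identity.

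Finally, I must check the hypotheses. Lemma~\ref{lem:pdl} is stated for multichain MDPs, and every unichain MDP is multichain, so it applies with the same $\pi,\pi'\in\Pi_+$ and $\mu\in\cM_+(\cS)$. The recurrent set $\cR$ in the corollary is the single recurrent class given by Fact~\ref{fact:classification}, consistent with the unichain assumption. I expect no real obstacle: the only substantive input is the constancy of $J^{\pi'}$ in the unichain case, which the paper already records. The cancellation itself is just the normalization of the two policies.
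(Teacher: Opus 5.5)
Your proposal is correct and matches the paper's proof: the paper also specializes Lemma~\ref{lem:pdl}, using $K^{\pi'}=PJ^{\pi'}=c^{\pi'}\mathbf{1}$ in the unichain case to conclude that the transient term vanishes. You additionally make explicit the cancellation $\sum_{a\in\cA}\big(\pi(a\mid s)-\pi'(a\mid s)\big)=0$, which the paper leaves implicit.
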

\begin{proof}
For any $\pi' \in \Pi_+$, the unichain assumption implies $K^{\pi'}=PJ^{\pi'}=c^{\pi'}\mathbf{1}$ for some $c^{\pi'} \in \mathbb{R}$ \cite[Theorem~8.3.2]{10.5555/528623}. Therefore, the second term in  Lemma~\ref{lem:pdl} vanishes.
\end{proof}

\subsection{Policy gradient theorem}

We are now ready to present the policy gradient theorem in the average-reward setup. Consider the optimization problem
\[\max_{\theta \in \Theta }  J^{\pi_{\theta}}_{\mu},
\]
where $\{\pi_{\theta} \mid \theta \in \Theta\subset \real^d\}$ is a set of differentiable parametric policies with respect to $\theta$. Based on previous machinery, we establish the following policy gradient theorem.   
\begin{theorem}[Recurrent and transient policy gradient]\label{thm::policy_gd}
Consider a multichain MDP. For $\pi_{\theta} \in \Pi_{+}$  and $\mu\in \mathcal{M}_+(\mathcal{S})$,
\[\nabla_{\theta}  J_\mu^{\pi_\theta}
=    \sum_{s \in \cR}
\sum_{a \in \cA}
d^{\pi_\theta}_{\mu}(s) \nabla_{\theta} \pi_{\theta} (a \mid s) Q^{\pi_\theta}(s,a) 
 +\sum_{s \in \cT}
\sum_{a \in \cA}  \delta^{\pi_\theta}_{\mu}(s)\nabla_{\theta} \pi_{\theta} (a \mid s) K^{\pi_\theta}(s,a).
\]
\end{theorem}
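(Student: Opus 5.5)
We derive the theorem from the performance difference lemma (Lemma~\ref{lem:pdl}) by a first-order expansion in $\theta$. Fix $\theta$ with $\pi_\theta\in\Pi_+$ and a direction $v\in\real^d$. For small $t$, $\pi_{\theta+tv}\in\Pi_+$, since $\Pi_+$ is relatively open and $\theta\mapsto\pi_\theta$ is differentiable, hence continuous. Apply Lemma~\ref{lem:pdl} with $\pi=\pi_{\theta+tv}$ and $\pi'=\pi_\theta$:
\[
J_\mu^{\pi_{\theta+tv}}-J_\mu^{\pi_\theta}
=\sum_{s\in\cR,a\in\cA} d^{\pi_{\theta+tv}}_\mu(s)\,\Delta_t(a\mid s)\,Q^{\pi_\theta}(s,a)
+\sum_{s\in\cT,a\in\cA}\delta^{\pi_{\theta+tv}}_\mu(s)\,\Delta_t(a\mid s)\,K^{\pi_\theta}(s,a),
\]
where $\Delta_t=\pi_{\theta+tv}-\pi_\theta$. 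The values $Q^{\pi_\theta}$ and $K^{\pi_\theta}$ are fixed, and only the visitation measures move with $t$. This is the key simplification.

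We divide by $t$ and let $t\to0$. By differentiability, $\Delta_t/t\to \nabla_\theta\pi_\theta(a\mid s)^\top v$. The directional derivative then exists and equals the claimed expression contracted with $v$, provided
\[
d^{\pi_{\theta+tv}}_\mu\to d^{\pi_\theta}_\mu
\qquad\text{and}\qquad
\delta^{\pi_{\theta+tv}}_\mu\to\delta^{\pi_\theta}_\mu .
\]
The claimed expression is linear in $v$, so it is a linear functional of the direction. To upgrade from directional derivatives to a genuine gradient, we run the same argument with an arbitrary perturbation $\theta'\to\theta$ in place of $\theta+tv$. Using $\pi_{\theta'}-\pi_\theta=\nabla\pi_\theta(\theta'-\theta)+o(\|\theta'-\theta\|)$ and the boundedness of the visitation measures near $\theta$, the remainder is $o(\|\theta'-\theta\|)$. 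This gives Fréchet differentiability.

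The main step, and what I expect to be the obstacle, is the continuity of the two visitation measures on $\Pi_+$. We handle it with Fact~\ref{fact:classification}: the sets $\cR_1,\dots,\cR_m$ and $\cT$ are the same for all policies in $\Pi_+$.

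\textbf{Transient measure.} $\bar T^\pi$ is the fixed-block restriction of $P^\pi$ to $\cT$, so it depends continuously (indeed linearly) on $\pi$. By Fact~\ref{fact:trans_spec}, $I-\bar T^\pi$ is invertible, so $(I-\bar T^\pi)^{-1}$ is continuous on $\Pi_+$ because matrix inversion is continuous on invertible matrices.

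\textbf{Recurrent measure.} We use the block form $P^\pi_\star$. Each $R^\pi_i$ is an irreducible stochastic matrix on the fixed class $\cR_i$, depending continuously on $\pi$. Its stationary distribution $g_i^\pi$ is the unique solution of the nonsingular linear system $g^\top(I-R_i^\pi+\mathbf 1 e_1^\top)=e_1^\top$; nonsingularity follows from uniqueness of the stationary distribution under irreducibility. Hence $g_i^\pi$ is continuous. Then $S^\pi_{i,\star}=(I-T^\pi)^{-1}S_i^\pi\mathbf 1(g_i^\pi)^\top$ is continuous by the same inversion argument.

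Therefore $P^\pi_\star$ is continuous on $\Pi_+$, and so are $d^\pi_\mu$ and $\delta^\pi_\mu$. This is essentially the mechanism behind Lemma~\ref{lem:continuity}, which we could cite if its proof already establishes continuity of $P^\pi_\star$. The argument breaks down on $\partial\Pi$ precisely because the classification, and hence the block sizes, can change there. This is why the assumption $\pi_\theta\in\Pi_+$ is essential.
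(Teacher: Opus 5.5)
Your proof is correct, and its central step matches the paper's. You apply Lemma~\ref{lem:pdl} to the perturbed and base policies, note that $Q^{\pi_\theta}$ and $K^{\pi_\theta}$ stay fixed while only $d_\mu$ and $\delta_\mu$ move, and pass to the limit using continuity of the visitation measures on $\Pi_+$. The difference lies in how the supporting regularity is obtained. The paper first asserts that $P^\pi_\star$, and hence $J^\pi_\mu$, is differentiable on $\Pi_+$. It imports differentiability of $R^\pi_{i,\star}$ from \citet{marbach2001simulation} and of $(I-T^\pi)^{-1}$ from \citet{lee2025policy}. It then uses Lemma~\ref{lem:pdl} only to identify the derivative along tangent directions of $\Pi_+$ and concludes by the chain rule, which is valid precisely because differentiability is known beforehand. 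You instead read the identity $J^{\pi'}_\mu-J^{\pi}_\mu=\sum_{s,a}\rho^{\pi'}_\mu(s)\,(\pi'-\pi)(a\mid s)\,G^{\pi}(s,a)$ as an exact slope formula. Mere continuity of $\rho^\pi_\mu$ then gives Fr\'echet differentiability directly in $\theta$. You establish that continuity by elementary linear algebra (the nonsingular system for $g_i^\pi$ and continuity of inversion on the transient block) rather than by citation. Your route is more self-contained and needs strictly weaker input, while the paper's yields differentiability of $P^\pi_\star$ as a by-product. There are two small points. First, in the Fr\'echet step, the cross term $\sum_{s,a}(\rho^{\pi_{\theta'}}_\mu-\rho^{\pi_\theta}_\mu)(s)\,[\nabla_\theta\pi_\theta(a\mid s)^\top(\theta'-\theta)]\,G^{\pi_\theta}(s,a)$ is $o(\|\theta'-\theta\|)$ by continuity of the visitation measures, not merely by their boundedness, so say so explicitly. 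Second, do not fall back on Lemma~\ref{lem:continuity}: the paper derives it from this very theorem, so citing it would be circular. It also concerns $J^\pi$ rather than $d^\pi_\mu$ and $\delta^\pi_\mu$, so your direct continuity argument is the one you actually need.
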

    In Theorem~\ref{thm::policy_gd}, the first term involves a sum over recurrent states, while the second term sums over transient states. Viewed in this way, Theorem~\ref{thm::policy_gd} naturally generalizes the classical policy gradient theorem for unichain MDPs.
    
\begin{corollary}[Recurrent policy gradient, \cite{sutton1999}]\label{cor::policy_gd}
 Consider a unichain MDP. 
 For $\pi_{\theta} \in \Pi_{+}$  and $\mu\in \mathcal{M}_+(\mathcal{S})$,
\[\nabla_{\theta}  J_\mu^{\pi_\theta}
=    \sum_{s \in \cR}
\sum_{a \in \cA}
d^{\pi_\theta}_{\mu}(s) \nabla_{\theta} \pi_{\theta} (a \mid s) Q^{\pi_\theta}(s,a). \]
\end{corollary}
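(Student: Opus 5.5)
The statement to prove is Corollary~\ref{cor::policy_gd}. The plan is to specialize Theorem~\ref{thm::policy_gd} to the unichain case, exactly as Corollary~\ref{cor:pdl} specializes Lemma~\ref{lem:pdl}. Fix $\pi_\theta\in\Pi_+$ and $\mu\in\cM_+(\cS)$. Theorem~\ref{thm::policy_gd} applies because every unichain MDP is multichain. It gives $\nabla_\theta J^{\pi_\theta}_\mu$ as the recurrent sum plus the transient term
\[
\sum_{s\in\cT}\delta^{\pi_\theta}_\mu(s)\sum_{a\in\cA}\nabla_\theta\pi_\theta(a\mid s)\,K^{\pi_\theta}(s,a).
\]
It therefore suffices to show that this transient term vanishes.

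First, in a unichain MDP, $J^{\pi_\theta}=c\mathbf{1}$ for some $c\in\real$ \cite[Theorem~8.3.2]{10.5555/528623}. Since $P$ is a stochastic kernel, $K^{\pi_\theta}=PJ^{\pi_\theta}=c\mathbf{1}$, so $K^{\pi_\theta}(s,a)=c$ for every $(s,a)$. Second, $\sum_{a}\pi_\theta(a\mid s)=1$ for every $\theta$, and differentiating gives $\sum_a\nabla_\theta\pi_\theta(a\mid s)=0$. Hence for each $s\in\cT$,
\[
\sum_a\nabla_\theta\pi_\theta(a\mid s)K^{\pi_\theta}(s,a)=c\sum_a\nabla_\theta\pi_\theta(a\mid s)=0.
\]
The weights $\delta^{\pi_\theta}_\mu(s)$ are finite by Fact~\ref{fact:trans_spec}, so the whole transient term is zero and the recurrent formula remains.

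There is no real obstacle here. The only points to watch are that $c$ multiplies a zero gradient sum, so we never need to differentiate $c$ itself, and that the recurrent set $\cR$ in the unichain case is the single recurrent class fixed by Fact~\ref{fact:classification}.
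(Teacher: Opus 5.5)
Your proof is correct and follows the paper's route: apply Theorem~\ref{thm::policy_gd} and kill the transient term using $K^{\pi_\theta}=PJ^{\pi_\theta}=c\mathbf{1}$. The one difference is how the inner sum is shown to vanish. The paper uses the product rule, $\sum_a \nabla_\theta\pi_\theta(a\mid s)K^{\pi_\theta}(s,a)=\nabla_\theta c^{\pi_\theta}-\nabla_\theta c^{\pi_\theta}=0$, which relies on differentiability of $\theta\mapsto c^{\pi_\theta}$; your identity $\sum_a\nabla_\theta\pi_\theta(a\mid s)=0$ never differentiates $c$ and is slightly more elementary.
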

\begin{proof}
 For any $\pi \in \Pi_+$, the unichain assumption implies $K^{\pi}=PJ^{\pi}=c^{\pi}\mathbf{1}$ for some $c^{\pi'} \in \mathbb{R}$ \cite[Theorem~8.3.2]{10.5555/528623}. By the product rule, $
\sum_{a \in \cA}\nabla_{\theta} \pi_{\theta} (a \mid s) K^{\pi_\theta}(s,a) =\nabla_{\theta} (\sum_{a \in \cA} \pi_{\theta} (a \mid s) K^{\pi_\theta}(s,a))- \sum_{a \in \cA} \pi_{\theta} (a \mid s) \nabla_{\theta} K^{\pi_\theta}(s,a) =\nabla_{\theta} (c^{\pi_{\theta}})-\nabla_{\theta} (c^{\pi_{\theta}})=0$. Therefore, the second term in Theorem~\ref{thm::policy_gd} vanishes.
\end{proof}
We also note that for unichain MDPs, $d^{\pi}_{\mu}=g^\pi$ where $g^\pi$ is the stationary distribution of $P^\pi$.





\section{Convergence of policy mirror ascent}\label{sec:3}

In this section, we study the convergence of policy mirror ascent on a compact subset of $\Pi_+$.

\subsection{$\alpha$-clipped policy mirror ascent}
We consider the direct parameterization
\[
\pi_{\theta} (a \,|\, s) = \theta_{s,a},
\]
where $\theta \in \real^{|\cS|\times |\cA|}$ satisfies $\sum_{a \in \cA} \theta_{s,a}=1$ and $\theta_{s,a}\ge 0$ for all $s\in \cS$ and $a\in \cA$. With this direct parameterization, we do not distinguish between the policy $\pi_\theta$ and the parameter $\theta$, and we use $\pi_k$ to denote the iterates of the algorithm.

Let \(h:\cM(\cA)\to\mathbb{R}\) be a strictly convex function and continuously
differentiable on the relative interior of \(\cM(\cA)\), denoted as
\(\rint\cM(\cA)\).
Define the Bregman divergence generated by \(h\) as
\[
D(p,p') \;=\; h(p)-h(p')-\langle \nabla h(p'),\, p-p' \rangle,
\qquad\forall\,p\in\cM(\cA),\,p'\in\rint\,\cM(\cA).
\]
In particular, if $h(p) =\frac{1}{2}\|p\|_2^2$,  then $D (p,p')= \frac{1}{2}\|p-p'\|^2_2$. If $h(p)=\sum_{a\in \cA}p(a) \log p(a)  $, then $D(p,p')= \sum_{a\in \cA}p(a) \log \frac{p(a)}{p'(a)}$. For $\rho \in \real_+^{|\cS|}$, define the weighted divergence
\[
D_\rho(\pi,\pi') \;=\; \sum_{s\in\cS}\rho(s)\,D(\pi(\cdot\,|\,s),\pi'(\cdot\,|\,s)).
\]
For policy $\pi$, define
\[
\rho_\mu^\pi(s)=
d^{\pi}_{\mu}(s)\mathbf{1}_{\mathcal{R}}(s)+\delta^{\pi}_{\mu}(s) \mathbf{1}_{\mathcal{T}}(s),
\]
where $\mathbf{1}_{\mathcal{R}}$ and $\mathbf{1}_{\mathcal{T}}$ are indicator functions.
Following the derivations of \citet{shani2020adaptive} and \citet{xiao2022convergence}, we consider
policy mirror ascent methods with weighted Bregman divergences:
\[
\pi_{k+1}
=\argmax_{\pi\in \mathcal{C}}
\left\{
\eta_k  \sum_{s \in \cS}
\sum_{a \in \cA} \nabla J_\mu^{\pi_{k}}(s,a)\pi(a \,|\, s)
-\,D_{\rho_\mu^{\pi_k}}\!\big(\pi,\pi_{k}\big)
\right\},
\]
where \(\eta_k\) is the step size, \(\mu\in\cM_+(\cS)\) is an initial state distribution with full support, and $\cC$ is compact subset of $\Pi_+$. 
For evaluating $\nabla J_\mu^{\pi_k}$, 
Theorem~\ref{thm::policy_gd} implicitly implies
\[\nabla J_\mu^{\pi}(s,a)
=
d^{\pi}_{\mu}(s)  Q^{\pi}(s,a)\mathbf{1}_{\mathcal{R}}(s)+\delta^{\pi}_{\mu}(s) K^{\pi}(s,a)\mathbf{1}_{\mathcal{T}}(s).
\]
Let $G^\pi(s,\cdot)= Q^{\pi}(s,\cdot) \mathbf{1}_{ \cR}(s) + K^\pi(s,\cdot)\mathbf{1}_{\cT} (s)$ for all $s\in \cS$. Then, for $\mu$ with full support on $\mathcal{S}$, since $d^{\pi}_{\mu}(s)>0$, $\delta^{\pi}_{\mu}(s')>0$ for any $s\in \cR$ and $s'\in \cT$, the update for $\pi_{k+1}$ splits across states: 
\[
\pi_{k+1}(\cdot \, |\, s)
=
\argmax_{p\in \mathcal{C}'}
\Big\{
\ \sum_{a\in \cA}\eta_kG^{\pi_k}(s,a)p(a)
- D\big(p(\cdot),\,\pi_{k}(\cdot \,|\, s)\big)
\Big\}, \qquad \forall s \in \cS.
\]

The choice $\mathcal{C}=\Pi$ may seem natural, but in our setup, we further restrict the policy set to guarantee finiteness of certain coefficients appearing in the convergence analysis. Define
\[
\Pi_{\alpha} = \{ \pi \,|\, \pi (a \,|\, s) \ge \alpha\text{ for all }s\in \cS,\,a\in \cA\}, \qquad \cM_\alpha(\cA) =\{p \,|\, p(a) \ge \alpha \text{ for all }\,a\in \cA\}
\]
with  $\alpha \in (0,1/|\cA|)$. Indeed, on $\Pi_\alpha$, we can ensure finiteness of the following coefficients.  
\begin{lemma}\label{lem:bound}
If $\mu\in \mathcal{M}_+(\mathcal{S})$ and $\alpha \in (0,1/|\cA|)$, then
    \[\max_{\pi \in {\Pi_\alpha}}\|\rho^\pi_{\mu}\|_1:=B_\alpha< \infty, \qquad \max_{s \in \cS, \pi, \pi' \in \Pi_\alpha} \infn{\frac{\rho^{\pi}_\mu(s)}{\rho_\mu^{\pi'}(s)}}:=C_{\alpha}<\infty.\]  
\end{lemma}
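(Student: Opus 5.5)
The plan is a compactness-plus-continuity argument on $\Pi_\alpha$. Since $\alpha>0$, $\Pi_\alpha\subset\Pi_+$, and $\Pi_\alpha$ is a closed, bounded subset of $\real^{|\cS|\times|\cA|}$ (a product of truncated simplices), hence compact. It therefore suffices to show three things: the map $\pi\mapsto\rho^\pi_\mu$ is continuous on $\Pi_+$; $\rho^\pi_\mu(s)>0$ for every $s\in\cS$ and $\pi\in\Pi_+$; and the extreme value theorem then applies. Indeed, $\|\rho^\pi_\mu\|_1$ is continuous and attains a finite maximum $B_\alpha$. The ratio $\rho^\pi_\mu(s)/\rho^{\pi'}_\mu(s)$ is continuous on the compact set $\Pi_\alpha\times\Pi_\alpha$ because its denominator never vanishes, so its maximum over $(\pi,\pi')$ and the finitely many $s$ is finite. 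Equivalently, I can bound $C_\alpha\le \max_{s,\pi}\rho^\pi_\mu(s)/\min_{s,\pi}\rho^{\pi}_\mu(s)$, where the minimum is attained and is positive.

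\textbf{Continuity.} By Fact~\ref{fact:classification}, the sets $\cT,\cR_1,\dots,\cR_m$ are the same for all $\pi\in\Pi_+$, so the block structure of $P^\pi$ is fixed and each block $R_i^\pi$, $S_i^\pi$, $T^\pi$ is an affine (hence continuous) function of $\pi$.

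\emph{Transient part.} $\bar T^\pi$ depends continuously on $\pi$, and by Fact~\ref{fact:trans_spec} the matrix $I-\bar T^\pi$ is invertible for each $\pi$. Matrix inversion is continuous on invertible matrices, so $\delta^\pi_\mu=\mu^\top(I-\bar T^\pi)^{-1}$ is continuous.

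\emph{Recurrent part.} Use $P^\pi_\star$ in block form. The matrix $R_i^\pi$ is irreducible on the fixed class $\cR_i$, so its stationary distribution $g_i^\pi$ is unique. Hence $g_i^\pi$ is the unique solution of the linear system $g^\top(I-R_i^\pi)=0$, $g^\top\mathbf 1=1$. This system has full column rank for every $\pi\in\Pi_+$, so its solution depends continuously on $\pi$ (for instance by Cramer's rule). It follows that $R^\pi_{i,\star}=\mathbf 1(g_i^\pi)^\top$ and $S^\pi_{i,\star}=(I-T^\pi)^{-1}S_i^\pi R^\pi_{i,\star}$ are continuous, and so is $d^\pi_\mu=\mu^\top P^\pi_\star$. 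I expect this step to be the main obstacle; it is exactly where fixing the classification via Fact~\ref{fact:classification} is essential, since on the boundary of $\Pi$ this continuity fails. Alternatively, I could reuse the argument behind Lemma~\ref{lem:continuity}.

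\textbf{Positivity.} Take $s\in\cT$. Then $\delta^\pi_\mu(s)\ge\mu(s)\,e_s^\top(I-\bar T^\pi)^{-1}e_s\ge\mu(s)>0$, because the Neumann series has nonnegative terms and its $i=0$ term is the identity.

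Now take $s\in\cR_i$. Then $d^\pi_\mu(s)\ge\sum_{s_0\in\cR_i}\mu(s_0)g_i^\pi(s)$. For an irreducible finite chain the stationary distribution has full support on its class, so $g_i^\pi(s)>0$. Moreover, $\mu(\cR_i)>0$ because $\mu$ has full support. Hence $\rho^\pi_\mu(s)>0$ for every $s$, which completes the argument via compactness.
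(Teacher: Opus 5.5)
Your proof is correct and takes essentially the paper's approach: continuity of $d^\pi_\mu$ and $\delta^\pi_\mu$ on $\Pi_+$ under the fixed recurrent--transient classification, compactness of $\Pi_\alpha$, strict positivity of each $g_i^\pi$ on $\cR_i$, and the bound $\delta^\pi_\mu(s)\ge\mu(s)$ on transient states. The only difference is that you prove continuity directly, via continuity of matrix inversion and of the unique solution of the stationary-distribution system, whereas the paper appeals to the differentiability argument in the proof of Theorem~\ref{thm::policy_gd}; you also state explicitly the compactness step and the fact $\mu(\cR_i)>0$, which the paper leaves implicit.
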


Now, we are ready to present the algorithm $\alpha$-clipped policy mirror ascent.
 
 \begin{algorithm}[H]
\caption{$\alpha$-clipped policy mirror ascent}
\label{alg:spmd}
\begin{algorithmic}
\STATE \textbf{Input:} 
$ \alpha \in (0,1/|\mathcal{A}|), K, \pi_0\in \Pi_\alpha, \{\eta_k\}^{K-1}_{k=0} \subset (0,\infty), D(\cdot, \cdot)$
\FOR{$k=0,1,\ldots,K-1$}
\FOR{$s \in \cS$}
        \STATE $\pi_{k+1}(\cdot \,|\, s)=
 \argmax_{p\in \cM_\alpha (\cA)}
 \left\{
 \eta_k \sum_{a \in \cA}G^{\pi_k}(s,a)p(a)
 - D\big(p(\cdot),\pi_{k}(\cdot \,|\, s)\big)\right\}$
\ENDFOR
\ENDFOR
\STATE \textbf{Output:} $\pi_K$ 
\end{algorithmic}
\end{algorithm}

 If $h$ is a Legendre function (please refer to Appendix~\ref{app:pre} for definition), such as the Euclidean norm or negative entropy, $\pi_{k+1}$ always exists \citep[Lemma~6]{xiao2022convergence}. 
 Specifically, if the Bregman divergence is Euclidean distance, $\alpha$-clipped policy mirror ascent is reduced to
  \[    \pi_{k+1}(\cdot \,|\, s) = \argmin_{p\in \cM_\alpha}\left\|p(\cdot)-(\pi_{k}(\cdot \,|\, s)+\eta_k G^{\pi_k}(\cdot \,|\, s))  \right\|^2_2, \qquad \forall\, s \in \cS\]
and if Bregman divergence is KL-divergence, it is reduced to
    \[     \pi_{k+1}(\cdot \,|\, s) = \argmin_{p\in \cM_\alpha} \text{KL}\big(p(\cdot) \,|\,\pi_{k}(\cdot \,|\, s) \exp(\eta_k G^{\pi_k}( s, \cdot))/Z_s \big), \qquad \forall \,s \in \cS,
    \]
where  $Z_s=\sum_{a\in \cA}\pi_{k}(a \,|\, s) \exp(\eta_k (G^{\pi_k}( s, a))$. If $\alpha=0$, these methods are known as the projected $Q$-descent and multiplicative weights updates, respectively \citep{xiao2022convergence, freund1997decision}. We note that both optimization subproblems can be solved through Euclidean and KL projection algorithms with $\widetilde{\mathcal{O}}(|\cS||\cA|) $ time complexity \citep{wang2013projection, herbster2001tracking}. We present both projection algorithms in Appendix~\ref{appen:proj}.

In the following subsections, we establish convergence of $\alpha$-clipped policy mirror ascent. For a given $\mu$, define \[\pi_\alpha\in \argmax_{\pi \in \Pi_{\alpha} } J_\mu^\pi.\] 
By Lemma~\ref{lem:continuity}, $\pi_\alpha$ exists, and we will show convergence of $J^{\pi_k}$ with respect to $J^{\pi_{\alpha}}$ with $\pi_k\in \Pi_\alpha$.

\subsection{Sublinear convergence with constant step size}

We first establish the sublinear convergence of the policy gradient algorithm with a constant step size. As a first step in our analysis, we state the following lemma, which ensures that the policies generated by the $\alpha$-clipped policy mirror ascent improve monotonically.

\begin{lemma}\label{lem:ngd_descent}
Consider a multichain MDP. For $\pi_0 \in \Pi_+$ and $\mu\in \mathcal{M}_+(\cS)$, the $\alpha$-clipped policy mirror ascent  generates a sequence of policies $\{\pi_{k}\}^\infty_{k=1}$ satisfying
\[
J_\mu^{\pi_k} \le J_\mu^{\pi_{k+1}}.
\]
\end{lemma}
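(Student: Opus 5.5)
The plan is to apply the performance difference lemma (Lemma~\ref{lem:pdl}) with $\pi=\pi_{k+1}$ and $\pi'=\pi_k$, both of which lie in $\Pi_\alpha\subset\Pi_+$. Because $G^{\pi_k}(s,\cdot)$ equals $Q^{\pi_k}(s,\cdot)$ on $\cR$ and $K^{\pi_k}(s,\cdot)$ on $\cT$, and $\rho^{\pi_{k+1}}_\mu$ combines $d^{\pi_{k+1}}_\mu$ on $\cR$ with $\delta^{\pi_{k+1}}_\mu$ on $\cT$, the lemma can be written compactly as
\[
J_\mu^{\pi_{k+1}}-J_\mu^{\pi_k}=\sum_{s\in\cS}\rho^{\pi_{k+1}}_\mu(s)\sum_{a\in\cA}\big(\pi_{k+1}(a\,|\,s)-\pi_k(a\,|\,s)\big)G^{\pi_k}(s,a).
\]
The weights $\rho^{\pi_{k+1}}_\mu(s)$ are nonnegative: $d^{\pi_{k+1}}_\mu$ is a probability measure, and $\delta^{\pi_{k+1}}_\mu(s)=\sum_{s_0}\mu(s_0)\sum_i(\bar T^{\pi_{k+1}})^i(s_0,s)\ge 0$. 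It therefore suffices to show that for every $s$,
\[
\sum_a\big(\pi_{k+1}(a\,|\,s)-\pi_k(a\,|\,s)\big)G^{\pi_k}(s,a)\ge 0.
\]

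This per-state inequality follows from the optimality of the mirror step, with $\pi_k(\cdot\,|\,s)$ serving as a comparator. For $k\ge1$, $\pi_k(\cdot\,|\,s)\in\cM_\alpha(\cA)$ is feasible in the subproblem defining $\pi_{k+1}(\cdot\,|\,s)$. Since $\pi_{k+1}(\cdot\,|\,s)$ is the maximizer, comparing objective values gives
\[
\eta_k\langle G^{\pi_k}(s,\cdot),\pi_{k+1}(\cdot\,|\,s)\rangle-D\big(\pi_{k+1}(\cdot\,|\,s),\pi_k(\cdot\,|\,s)\big)\ge \eta_k\langle G^{\pi_k}(s,\cdot),\pi_k(\cdot\,|\,s)\rangle-0.
\]
Because $D\ge0$ and $\eta_k>0$, we get $\langle G^{\pi_k}(s,\cdot),\pi_{k+1}-\pi_k\rangle\ge D/\eta_k\ge0$. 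The sharper three-point lemma is not needed here, only feasibility of $\pi_k$. Summing with the weights $\rho^{\pi_{k+1}}_\mu(s)\ge0$ gives $J^{\pi_{k+1}}_\mu\ge J^{\pi_k}_\mu$.

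The main subtlety is the base case. The algorithm takes $\pi_0\in\Pi_\alpha$, so $\pi_0(\cdot\,|\,s)$ is feasible and the same argument applies at $k=0$. If instead only $\pi_0\in\Pi_+$ is assumed, as the statement literally reads, then $\pi_0$ may fail to be feasible and the comparison breaks at the first step. This is consistent with the stated sequence starting at $\pi_1$, where monotonicity is claimed for $k\ge1$. One also needs $\pi_k\in\Pi_+$ for every $k$ so that Lemma~\ref{lem:pdl} applies; this holds because $\pi_{k+1}\in\Pi_\alpha\subset\Pi_+$. The remaining check is that the decomposition in Lemma~\ref{lem:pdl} really uses the new policy's measures $d^{\pi_{k+1}}_\mu,\delta^{\pi_{k+1}}_\mu$ together with the old policy's $Q^{\pi_k}$ and $K^{\pi_k}$, which matches the orientation above.
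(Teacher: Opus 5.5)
Your proof is correct and follows the same route as the paper (proved there as Lemma~\ref{lem:7'}). You derive a per-state inequality $\sum_{a}G^{\pi_k}(s,a)\big(\pi_{k+1}(a\,|\,s)-\pi_k(a\,|\,s)\big)\ge 0$ using $\pi_k(\cdot\,|\,s)\in\cM_\alpha(\cA)$ as comparator, then sum it against the nonnegative weights $\rho^{\pi_{k+1}}_\mu$ via Lemma~\ref{lem:pdl}; your remark that the step $k=0$ needs $\pi_0\in\Pi_\alpha$ applies equally to the paper's argument. The only difference is that the paper gets the per-state inequality from the three-point lemma (Fact~\ref{fact:bregman}), which gives the stronger bound with $D(\pi_{k+1}(\cdot\,|\,s),\pi_k(\cdot\,|\,s))+D(\pi_k(\cdot\,|\,s),\pi_{k+1}(\cdot\,|\,s))$ and the inequality $(*)$ reused with $p=\pi_\alpha(\cdot\,|\,s)$ in Theorems~\ref{thm:ngd} and~\ref{thm:ngd_linear}, whereas your direct comparison of objective values needs only existence of the maximizer and $D\ge 0$, which is all this lemma requires.
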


Through Lemma~\ref{lem:ngd_descent}, we obtain the following convergence result of the $\alpha$-clipped policy mirror ascent. The proof presented in Appendix~\ref{appen:miss_3} closely follows \cite{xiao2022convergence} which considers the discounted reward setup and uses the discounted state-visitation distribution.

\begin{theorem}\label{thm:ngd}
Consider a multichain MDP. For $\pi_0 \in \Pi_\alpha$ and $\mu\in\cM_+(\cS)$, the $\alpha$-clipped policy mirror ascent with constant step size $\eta>0$ generates a sequence of policies $\{\pi_{k}\}^\infty_{k=1}$ satisfying
\[
J_\mu^{\pi_\alpha}-J_\mu^{\pi_k} 
\le \frac{1}{k+1}\bigg(\frac{D_{\rho_\mu^{\pi_\alpha}}(\pi_\alpha,\pi_0)}{\eta}
+C_\alpha (J_\mu^{\pi_\alpha}-J_{\mu}^{\pi_{0}}) \bigg).
\]
\end{theorem}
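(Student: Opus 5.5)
We follow the template of \citet{xiao2022convergence} for sublinear convergence of policy mirror ascent, using Lemma~\ref{lem:pdl} in place of the discounted performance difference lemma and $\rho_\mu^{\pi}$ in place of the discounted visitation distribution. The key observation is that Lemma~\ref{lem:pdl} can be written compactly as
\[
J_\mu^{\pi}-J_\mu^{\pi'}=\sum_{s\in\cS}\rho_\mu^{\pi}(s)\,\big\langle \pi(\cdot\,|\,s)-\pi'(\cdot\,|\,s),\,G^{\pi'}(s,\cdot)\big\rangle .
\]
This holds because $d^\pi_\mu$ vanishes on $\cT$ and the transient term only involves $s\in\cT$.

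\textbf{Step 1 (three-point inequality).} Since the update at each $s$ is a Bregman proximal step over the convex set $\cM_\alpha(\cA)$, the standard three-point lemma \citep[Lemma~6]{xiao2022convergence} gives, for every $p\in\cM_\alpha(\cA)$,
\[
\eta\langle p-\pi_{k+1}(\cdot|s),G^{\pi_k}(s,\cdot)\rangle \le D(p,\pi_k(\cdot|s))-D(p,\pi_{k+1}(\cdot|s))-D(\pi_{k+1}(\cdot|s),\pi_k(\cdot|s)).
\]
Taking $p=\pi_k(\cdot|s)$ shows $\langle \pi_{k+1}-\pi_k,G^{\pi_k}\rangle(s)\ge 0$ for every $s$. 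This is the per-state monotonicity underlying Lemma~\ref{lem:ngd_descent}, which we may use directly.

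\textbf{Step 2 (one-step bound).} Take $p=\pi_\alpha(\cdot|s)$, which is admissible since $\pi_\alpha\in\Pi_\alpha$, and weight by $\rho_\mu^{\pi_\alpha}(s)$. Split the inner product as
\[
\langle \pi_\alpha-\pi_k,G^{\pi_k}\rangle(s)=\langle \pi_\alpha-\pi_{k+1},G^{\pi_k}\rangle(s)+\langle \pi_{k+1}-\pi_k,G^{\pi_k}\rangle(s).
\]
Summing with weights $\rho_\mu^{\pi_\alpha}$, the left side equals $J_\mu^{\pi_\alpha}-J_\mu^{\pi_k}$ by Lemma~\ref{lem:pdl}. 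The first piece is bounded by $\frac1\eta\big(D_{\rho_\mu^{\pi_\alpha}}(\pi_\alpha,\pi_k)-D_{\rho_\mu^{\pi_\alpha}}(\pi_\alpha,\pi_{k+1})\big)$.

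For the second piece, which is nonnegative at each state, we change measure using Lemma~\ref{lem:bound}: $\rho_\mu^{\pi_\alpha}(s)\le C_\alpha\rho_\mu^{\pi_{k+1}}(s)$, since both policies lie in $\Pi_\alpha$. Applying Lemma~\ref{lem:pdl} again with $\pi=\pi_{k+1}$ and $\pi'=\pi_k$ gives
\[
\sum_s\rho_\mu^{\pi_\alpha}(s)\langle \pi_{k+1}-\pi_k,G^{\pi_k}\rangle(s)\le C_\alpha\big(J_\mu^{\pi_{k+1}}-J_\mu^{\pi_k}\big).
\]
Combining the two pieces yields
\[
J_\mu^{\pi_\alpha}-J_\mu^{\pi_k}\le \tfrac1\eta\big(D_{\rho_\mu^{\pi_\alpha}}(\pi_\alpha,\pi_k)-D_{\rho_\mu^{\pi_\alpha}}(\pi_\alpha,\pi_{k+1})\big)+C_\alpha\big(J_\mu^{\pi_{k+1}}-J_\mu^{\pi_k}\big).
\]

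\textbf{Step 3 (telescoping).} Sum the one-step bound over $i=0,\dots,k$. The divergence terms telescope to at most $D_{\rho_\mu^{\pi_\alpha}}(\pi_\alpha,\pi_0)/\eta$. The $C_\alpha$ terms telescope to
\[
C_\alpha\big(J_\mu^{\pi_{k+1}}-J_\mu^{\pi_0}\big)\le C_\alpha\big(J_\mu^{\pi_\alpha}-J_\mu^{\pi_0}\big),
\]
where the inequality uses optimality of $\pi_\alpha$ over $\Pi_\alpha$. By Lemma~\ref{lem:ngd_descent}, each summand on the left satisfies $J_\mu^{\pi_\alpha}-J_\mu^{\pi_i}\ge J_\mu^{\pi_\alpha}-J_\mu^{\pi_k}$, so the left side is at least $(k+1)\big(J_\mu^{\pi_\alpha}-J_\mu^{\pi_k}\big)$. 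Dividing by $k+1$ gives the claim.

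The main obstacle is the change of measure in Step 2. It needs the per-state nonnegativity of $\langle\pi_{k+1}-\pi_k,G^{\pi_k}\rangle(s)$ together with the ratio bound $C_\alpha$; this is exactly where the clipping to $\Pi_\alpha$ and Lemma~\ref{lem:bound} are needed. A second point to check is that the compact form of Lemma~\ref{lem:pdl} genuinely uses $\rho^{\pi}_\mu$ of the \emph{first} policy. This is what makes the measure $\rho^{\pi_\alpha}_\mu$ appear in Step 2 and $\rho^{\pi_{k+1}}_\mu$ appear in the change of measure.
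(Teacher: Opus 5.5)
Your proposal is correct and follows the paper's proof step for step. It applies the three-point inequality with $p=\pi_k$ to get per-state monotonicity and with $p=\pi_\alpha$ for the main bound, changes measure from $\rho_\mu^{\pi_\alpha}$ to $\rho_\mu^{\pi_{k+1}}$ via $C_\alpha$ using that per-state sign, and finishes by telescoping with Lemma~\ref{lem:ngd_descent}. You also state explicitly that $J_\mu^{\pi_{k+1}}\le J_\mu^{\pi_\alpha}$ because $\pi_{k+1}\in\Pi_\alpha$, a step the paper uses only implicitly when bounding $C_\alpha(J_\mu^{\pi_{k+1}}-J_\mu^{\pi_0})$.
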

Theorem~\ref{thm:ngd} shows that $J_\mu^{\pi_k}\rightarrow J_\mu^{\pi_\alpha}$ with a sublinear rate, and since $J^{\pi_{\alpha}}_\mu \rightarrow J^\star_{+,\mu}$  as $\alpha\rightarrow 0$, if we choose a sufficiently small $\alpha$ such that $J_{+,\mu}^\star-J_{\mu}^{\pi_\alpha}< \frac{\epsilon}{2}$, we can obtain policy $\pi$ satisfying $J_{+,\mu}^\star-J_\mu^\pi <\epsilon$ through Theorem~\ref{thm:ngd}. 

\subsection{linear convergence with adaptive step size}

Next, we present the \emph{linear} convergence rate of the $\alpha$-clipped policy mirror ascent with adaptive step sizes. 
\begin{theorem}\label{thm:ngd_linear}
Consider a multichain MDP. For $\pi_0 \in \Pi_\alpha$ and $\mu\in \cM_+(\cS)$, the $\alpha$-clipped policy mirror ascent with step sizes $(C_\alpha-1)\eta_{k+1}\ge C_\alpha\eta_k>0$ generates a sequence of policies $\{\pi_{k}\}^\infty_{k=1}$ satisfying   
\[
J_\mu^{\pi_\alpha}-J_\mu^{\pi_{k}} 
\;\le\;
\left(1-\frac{1}{C_\alpha}\right)^{\!k}
\bigg( \frac{D_{\rho_\mu^{\pi_\alpha}}(\pi_\alpha, \pi_{0})}{\eta_0(C_\alpha-1)}
+
J_\mu^{\pi_\alpha}-J_\mu^{\pi_0}\bigg).
\]
\end{theorem}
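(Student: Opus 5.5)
We follow the linear-convergence argument of \citet{xiao2022convergence} for discounted MDPs. The discounted visitation distribution is replaced by $\rho_\mu^{\pi}$, the distribution-mismatch coefficient by $C_\alpha$ from Lemma~\ref{lem:bound}, and the discounted performance difference lemma by Lemma~\ref{lem:pdl}. Write $\Delta_k = J_\mu^{\pi_\alpha}-J_\mu^{\pi_k}\ge 0$ and $D_k = D_{\rho_\mu^{\pi_\alpha}}(\pi_\alpha,\pi_k)$.

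\textbf{Step 1 (three-point lemma).} Fix a state $s$. The update is a Bregman proximal step over the convex set $\cM_\alpha(\cA)$, and $\pi_\alpha(\cdot\,|\,s)\in\cM_\alpha(\cA)$ is a feasible comparator. The three-point (Bregman proximal) inequality gives
\[
\eta_k\langle G^{\pi_k}(s,\cdot),\, p-\pi_{k+1}(\cdot|s)\rangle \le D(p,\pi_k(\cdot|s)) - D(p,\pi_{k+1}(\cdot|s)) - D(\pi_{k+1}(\cdot|s),\pi_k(\cdot|s))
\]
for every $p\in\cM_\alpha(\cA)$.

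Taking $p=\pi_k(\cdot|s)$ shows $\langle G^{\pi_k}(s,\cdot),\pi_{k+1}(\cdot|s)-\pi_k(\cdot|s)\rangle\ge 0$ for every $s$. This is exactly what drives Lemma~\ref{lem:ngd_descent}.

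\textbf{Step 2 (link to the performance difference lemma).} Apply Lemma~\ref{lem:pdl} with $(\pi,\pi')=(\pi_{k+1},\pi_k)$. This gives
\[
J_\mu^{\pi_{k+1}}-J_\mu^{\pi_k}=\sum_s \rho_\mu^{\pi_{k+1}}(s)\langle G^{\pi_k}(s,\cdot),\pi_{k+1}-\pi_k\rangle .
\]
We use that $\langle \pi_{k+1}-\pi_k, Q^{\pi_k}(s,\cdot)\rangle$ is the recurrent term and the analogous expression with $K$ is the transient term. Every summand is nonnegative, and $\rho_\mu^{\pi_{k+1}}(s)\ge \rho_\mu^{\pi_\alpha}(s)/C_\alpha$ by Lemma~\ref{lem:bound}. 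Hence
\[
\sum_s \rho_\mu^{\pi_\alpha}(s)\langle G^{\pi_k},\pi_{k+1}-\pi_k\rangle \le C_\alpha (\Delta_k-\Delta_{k+1}).
\]

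Next apply Lemma~\ref{lem:pdl} with $(\pi,\pi')=(\pi_\alpha,\pi_k)$:
\[
\Delta_k=\sum_s\rho_\mu^{\pi_\alpha}(s)\langle G^{\pi_k},\pi_\alpha-\pi_k\rangle .
\]

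\textbf{Step 3 (combine into a recursion).} Take $p=\pi_\alpha(\cdot|s)$ in Step~1, weight by $\rho_\mu^{\pi_\alpha}(s)$, and sum over $s$. Splitting $\pi_\alpha-\pi_{k+1}=(\pi_\alpha-\pi_k)-(\pi_{k+1}-\pi_k)$ and dropping the nonpositive term $-D(\pi_{k+1},\pi_k)$ gives
\[
\eta_k\big(\Delta_k - C_\alpha(\Delta_k-\Delta_{k+1})\big)\le D_k-D_{k+1},
\]
that is,
\[
C_\alpha\Delta_{k+1} + \tfrac{D_{k+1}}{\eta_k} \le (C_\alpha-1)\Delta_k + \tfrac{D_k}{\eta_k}.
\]
Note that $C_\alpha\ge1$; the case $C_\alpha=1$ forces convergence in one step and is trivial, so assume $C_\alpha>1$.

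Divide by $C_\alpha$ and use the step-size condition, which gives $\frac{1}{\eta_{k+1}}\le \frac{C_\alpha-1}{C_\alpha\eta_k}$. The recursion becomes
\[
\Delta_{k+1}+\frac{D_{k+1}}{\eta_{k+1}(C_\alpha-1)} \le \Big(1-\tfrac1{C_\alpha}\Big)\Big(\Delta_k+\frac{D_k}{\eta_k(C_\alpha-1)}\Big).
\]
Iterating and dropping the nonnegative $D_k$ term on the left yields the claimed bound.

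\textbf{Main obstacle.} The delicate point is Step~2. We must justify that Lemma~\ref{lem:pdl} applies to the pair $(\pi_{k+1},\pi_k)$ with the weights $\rho_\mu^{\pi_{k+1}}$. We must also check that the per-state improvement is nonnegative separately on $\cR$ and on $\cT$, even though the transient part uses $K$ rather than $Q$. Only then is the change of measure via $C_\alpha$ legitimate. Since both policies lie in $\Pi_\alpha\subset\Pi_+$, Fact~\ref{fact:classification} fixes $\cR$ and $\cT$, so $G^{\pi_k}$ is well defined. Moreover, each per-state inner product is nonnegative by Step~1, which settles the sign issue.
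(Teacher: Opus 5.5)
Your proposal is correct and follows essentially the same route as the paper. Both use the three-point inequality with $p=\pi_\alpha(\cdot\,|\,s)$, weighted by $\rho_\mu^{\pi_\alpha}$. Both use the change of measure via $C_\alpha$ on the per-state nonnegative improvements, together with Lemma~\ref{lem:pdl} for $(\pi_{k+1},\pi_k)$, and the exact identity from Lemma~\ref{lem:pdl} for $(\pi_\alpha,\pi_k)$. The final division by $C_\alpha$ and the step-size telescoping are also the same.

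One side remark needs correcting. The case $C_\alpha=1$ does not ``force convergence in one step''. It is simply excluded by the hypothesis, since $(C_\alpha-1)\eta_{k+1}\ge C_\alpha\eta_k>0$ cannot hold when $C_\alpha=1$.
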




Although the adaptive step size yields a linear convergence rate, it requires knowledge of $C_{\alpha}$ to set the step sizes. In contrast, a constant step size always guarantees a sublinear rate. We point out that to the best of our knowledge, Theorems~\ref{thm:ngd} and \ref{thm:ngd_linear} are the first convergence results of policy gradient methods for average-reward multichain MDPs.

\begin{algorithm}[H]
\caption{Critic  }
\label{alg:critic}
\label{alg:multi_traj_gm}
\begin{algorithmic}
\STATE \textbf{Input:} $\pi \in \Pi_+, N,H,N', H'\in\mathbb{Z}_{+}$
\FOR{$j=1,\ldots,N$}
\FOR{$(s_0,a_0) \in \cS \times \cA$}
  \STATE Generate $\{(s_0,a_0),(s_1,a_1),\ldots,(s_H,a_H)\,|\, s_i \sim P(\cdot\,|\, s_{i-1}, a_{i-1}), a_{i} \sim \pi(\cdot \,|\, s_{i}) \}$ 
  \STATE $K^{j}(s_0,a_0)=\frac{1}{H+1}\sum^H_{i=0}r_j(s_i,a_i)$.
   \ENDFOR
    \ENDFOR
\STATE $\hat{K}^{\pi}=\frac{1}{N}\sum_{j=1}^{N}K^{j}$.
\FOR{$j=1,\ldots,N'$}
\FOR{$(s_0,a_0) \in \cS\times\cA$}
  \STATE Generate $\{(s_0,a_0),(s_1,a_1),\ldots,(s_{H'},a_{H'})\,|\, s_i \sim P(\cdot\,|\, s_{i-1}, a_{i-1}), a_{i} \sim \pi(\cdot \,|\, s_{i}) \}$ 
    
    \STATE $\hat{Q}^{j}(s_0,a_0)=\frac{1}{H'+1}\sum_{h=0}^{H'} \sum^h_{i=0}\big(r_j(s_i,a_i)-\hat{K}^{\pi}(s_i,a_i)\big)$.
  \ENDFOR
  \ENDFOR
  \STATE $\hat{Q}^{\pi}=\frac{1}{N'}\sum_{j=1}^{N'}\hat{Q}^{j}$
\FOR{$ s \in \cS$}
  \STATE $\hat{G}^\pi(s,\cdot)= \hat{Q}^{\pi}(s,\cdot) \mathbf{1}_{ \cR}(s) + \hat{K}^\pi(s,\cdot)\mathbf{1}_{\cT} (s)$ 
  \ENDFOR
\STATE \textbf{Output:} $\hat{G}^\pi$
\end{algorithmic}
\end{algorithm}

\section{Sample complexity of policy mirror ascent}\label{sec:4}

In this section, we extend the analysis of the $\alpha$-clipped policy mirror ascent to the sampling setting in which the transition probabilities are unknown. Specifically, we assume access to a generative model \citep{kearns1998finite}, which provides independent samples of the next state for any given state and action.

\subsection{Approximating $G^\pi$ with a generative model}
With a generative model, for a given policy $\pi$ and any state-action pair $(s,a)\in\mathcal{S}\times\mathcal{A}$, we can
generate independent trajectories of horizon $H$:
\[
\left\{
\bigl(s_0=s,a_0=a\bigr),\bigl(s_1,a_1\bigr),\ldots,\bigl(s_{H-1},a_{H-1}\bigr)
\right\}.
\]
 With such samples, we approximate $G^\pi(s,\cdot)= Q^{\pi}(s,\cdot) \mathbf{1}_{ \cR}(s) + K^\pi(s,\cdot)\mathbf{1}_{\cT} (s)$, $ \forall s\in \cS$, through the \emph{critic} method described as Algorithm~\ref{alg:critic}.
 Intuitively speaking, $ \hat{K}^\pi,\hat{Q}^\pi$ is $H$-horizon truncation of $K^\pi, Q^\pi$.

To present the sample-complexity results for the critic method under a generative model, we define \emph{expected target time} as follows. For $\pi \in \Pi$ and each recurrent class $\mathcal{R}_i$,  
\begin{align*}
t^{\pi, i}_{\mathrm{tar}}
:= \sum_{s'\in\mathcal{R}_i} \rho_i^{\pi}(s')\,\mathbb{E}_{\pi}\!\left[t^i_{s'} \,|\, s_0=s \right]\text{ for }i=1,\dots,m, \qquad \quad t^\pi_{\mathrm{tar}}=\max_{1\le i \le m}t^{\pi, i}_{\mathrm{tar}}
\label{eq:Ctar}
\end{align*}
where $t_{s'} := \inf\{t\ge 0 : s_t = s'\}$ for $s, s'\in\mathcal{R}_i$, and $\rho_i^\pi$ is stationary distribution of $R^\pi_i$. It is known that expected target time is always finite and independent of the starting state when the state space is finite \citep{levin2017markov}. 
\begin{theorem}\label{thm:PE}
  Consider a multichain MDP.  Let $\epsilon>0, \delta>0$.  For $\pi_0, \pi \in \Pi_+$ and $\mu\in \cM_+(\cS)$, with $1-\delta$ probability, the output of critic method satisfies $\|\hat{G}^\pi -G^\pi\|_\infty \le \epsilon $ with sample complexity 
    \[\mathcal{O}\bigg( \frac{R^3(t^\pi_{\mathrm{tar}})^3\infn{Q^\pi}^4|\cS||\cA|}{ \epsilon^6}\log\Big(\frac{2|\cS||\cA|}{\delta}\Big)\bigg).\]
\end{theorem}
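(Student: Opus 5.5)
The plan is to split the error $\|\hat G^\pi-G^\pi\|_\infty$ into a \emph{bias} term from truncating at horizons $H,H'$ and a \emph{variance} term from averaging $N,N'$ independent trajectories. On transient states $\hat G^\pi=\hat K^\pi$, and on recurrent states $\hat G^\pi=\hat Q^\pi$. We therefore control $\|\hat K^\pi-K^\pi\|_\infty$ and $\|\hat Q^\pi-Q^\pi\|_\infty$ separately, with each target accuracy of order $\epsilon$, and then take a union bound over the $|\cS||\cA|$ pairs and the two stages. This union bound is the source of the $\log(2|\cS||\cA|/\delta)$ factor.

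\textbf{Step 1 (estimating $K^\pi$).} First I would write $\expec[K^j(s,a)]=\frac{1}{H+1}\sum_{i=0}^H e_{(s,a)}^\top \tilde P^i r$, where $\tilde P$ is the state-action chain induced by $\pi$. The deviation from $K^\pi(s,a)$ is then a Ces\`aro truncation error. To bound it I would use the Poisson equation: $(I-\tilde P)Q^\pi = r-K^\pi$ together with $\tilde P K^\pi=K^\pi$. Telescoping gives
\[
\frac{1}{H+1}\sum_{i=0}^H\tilde P^i r - K^\pi=\frac{1}{H+1}(I-\tilde P^{H+1})Q^\pi ,
\]
so the bias is at most $2\|Q^\pi\|_\infty/(H+1)$. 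For the variance, each $K^j$ lies in $[-R,R]$, and Hoeffding's inequality gives deviation $R\sqrt{\log(\cdot)/N}$. Choosing $H\asymp \|Q^\pi\|_\infty/\epsilon$ and $N\asymp R^2\log(\cdot)/\epsilon^2$ makes $\|\hat K^\pi-K^\pi\|_\infty\lesssim\epsilon'$.

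\textbf{Step 2 (estimating $Q^\pi$).} Next I would condition on $\hat K^\pi$; the first-stage and second-stage samples are independent. The estimator $\hat Q^j$ is a Ces\`aro-averaged partial sum of $r-\hat K^\pi$. Replacing $\hat K^\pi$ by $K^\pi$ costs at most $\frac{1}{H'+1}\sum_{h}(h+1)\epsilon'\approx H'\epsilon'/2$. The exact-$K^\pi$ version has expectation
\[
\frac{1}{H'+1}\sum_{h=0}^{H'}(I-\tilde P^{h+1})Q^\pi ,
\]
whose distance from $Q^\pi$ is $\|(\frac{1}{H'+1}\sum_h \tilde P^{h+1})Q^\pi - \tilde P_\star Q^\pi\|$ plus $\|\tilde P_\star Q^\pi\|$. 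The normalization of $V^\pi$ in the paper, $P_\star V^\pi=0$, makes the second term vanish. The first is again a Ces\`aro truncation error, which I would bound by $O(t^\pi_{\mathrm{tar}}\|Q^\pi\|_\infty/H')$ using the target-time/mixing characterization of how fast Ces\`aro averages converge within each recurrent class (Levin--Peres).

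For concentration, the partial sums $\sum_{i\le h}(r-K^\pi)$ are not bounded by a constant. However, their fluctuations are controlled by the relative value plus a martingale. I would use a regeneration argument via hitting times, with expected target time $t^\pi_{\mathrm{tar}}$, to get $|\hat Q^j|\lesssim R\,t^\pi_{\mathrm{tar}}$ in sub-exponential or high-probability form. Hoeffding or Bernstein then gives $N'\asymp (R t^\pi_{\mathrm{tar}})^2\log(\cdot)/\epsilon^2$.

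\textbf{Step 3 (balancing).} The error is roughly
\[
H'\epsilon'+\frac{t^\pi_{\mathrm{tar}}\|Q^\pi\|_\infty}{H'}+\frac{Rt^\pi_{\mathrm{tar}}}{\sqrt{N'}}\le\epsilon .
\]
This forces $H'\asymp t^\pi_{\mathrm{tar}}\|Q^\pi\|_\infty/\epsilon$ and $\epsilon'\asymp\epsilon/H'\asymp \epsilon^2/(t^\pi_{\mathrm{tar}}\|Q^\pi\|_\infty)$. Step 1 then costs
\[
N H|\cS||\cA|\asymp \frac{R^2(t^\pi_{\mathrm{tar}})^2\|Q^\pi\|_\infty^2}{\epsilon^4}\cdot\frac{(t^\pi_{\mathrm{tar}})\|Q^\pi\|^2_\infty}{\epsilon^2}|\cS||\cA|\log(\cdot),
\]
up to how the factors of $R$ are tracked. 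This has the stated order $R^3(t^\pi_{\mathrm{tar}})^3\|Q^\pi\|_\infty^4|\cS||\cA|\epsilon^{-6}\log$, and it dominates the Step 2 cost.

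The main obstacle is Step 2: bounding both the Ces\`aro bias of $\hat Q$ and the tails of the unbounded partial sums in terms of $t^\pi_{\mathrm{tar}}$ uniformly over recurrent classes. Starting states in $\cT$ also need handling, but $G^\pi$ only uses $\hat Q$ on $\cR$, which removes the transient-time dependence. I would first try a coupling/regeneration decomposition at hitting times of a stationary-sampled target state in each class $\cR_i$.
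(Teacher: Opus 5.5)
Your decomposition is the paper's. You estimate $K^\pi$ with bounded Hoeffding plus a $1/H$ Ces\`aro bias. You pay $(H'+2)\infn{\hat K^\pi-K^\pi}/2$ for plugging $\hat K^\pi$ into $\hat Q^\pi$. You bound the $\hat Q^\pi$ bias on recurrent states by $O(t^\pi_{\mathrm{tar}}\infn{V^\pi}/H')$ via $P^\pi_\star V^\pi=0$ and the target-time lemma. Finally you force $\infn{\hat K^\pi-K^\pi}\asymp\epsilon/H'$, so the first stage dominates at order $\epsilon^{-6}$. Your Step~1 bias bound via the state--action Poisson equation $(I-\tilde P)Q^\pi=r-K^\pi$, $\tilde PK^\pi=K^\pi$, is correct. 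It is slightly cleaner than the paper's appeal to the value-iteration rate $\infn{V^k/k-J^\pi}\le 2\infn{V^\pi}/k$. Your identity $\tilde P_\star Q^\pi=PP^\pi_\star V^\pi=0$ (from $\tilde P^{h+1}Q^\pi=P(P^\pi)^hV^\pi$) is exactly what Lemma~\ref{lem:tar} exploits.

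The step that would fail is the concentration argument for $\hat Q^j$, the one you flag as the main obstacle. The bound $|\hat Q^j|\lesssim R\,t^\pi_{\mathrm{tar}}$ is false, whether stated with high probability or in sub-exponential form with a parameter independent of $H'$. By the same Poisson equation, $\sum_{i\le h}(r-K^\pi)(s_i,a_i)=Q^\pi(s_0,a_0)-Q^\pi(s_{h+1},a_{h+1})+M_h$, where $M_h$ is a martingale with increments up to $2\infn{Q^\pi}$. Its Ces\`aro average over $h\le H'$ genuinely fluctuates at scale $\sqrt{H'}$. Concretely, take one action, two states, uniform i.i.d.\ transitions and rewards $\pm1$. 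Then $t^\pi_{\mathrm{tar}}=1$, $K^\pi=0$ and $\infn{Q^\pi}=1$. Yet $\hat Q^j=\frac{1}{H'+1}\sum_{i=0}^{H'}(H'+1-i)\,r(s_i)$ has standard deviation about $\sqrt{H'/3}$, which blows up since $H'\asymp t^\pi_{\mathrm{tar}}\infn{Q^\pi}/\epsilon$. So your choice $N'\asymp(Rt^\pi_{\mathrm{tar}})^2\log(\cdot)/\epsilon^2$ and the variance term $Rt^\pi_{\mathrm{tar}}/\sqrt{N'}$ in Step~3 are wrong; no regeneration argument can rescue them.

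The repair is simpler than what you propose. As in the paper, run the concentration on the version with the true $K^\pi$, handling $\hat K^\pi-K^\pi$ deterministically so no conditioning is needed. Use the deterministic range $\bigl|\frac{1}{H'+1}\sum_{h=0}^{H'}\sum_{i=0}^{h}(r-K^\pi)(s_i,a_i)\bigr|\le R(H'+2)$ and apply Hoeffding with $N'=2R^2(H'+2)^2\log(2/\delta)/\epsilon^2$. The second-stage cost becomes $N'H'|\cS||\cA|=O\bigl(R^2(t^\pi_{\mathrm{tar}})^3\infn{Q^\pi}^3|\cS||\cA|\,\epsilon^{-5}\log(\cdot)\bigr)$. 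This is still dominated by the $\epsilon^{-6}$ first-stage cost, so your final bound goes through with this substitution and needs no regeneration argument.
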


\subsection{Observing classification of states through sampling }
To compute $\hat{G}^\pi$ with $\hat{Q}^\pi, \hat{K}^\pi$ from the critic method, the classification of the states is required.

Define \emph{transient half-life} and \emph{cover time} for given $\pi$ and each recurrent class $\mathcal{R}^\pi_i$ as
\[t_{\frac{1}{2},\pi} = \min \left\{t \ge 1: \infn{(\bar{T}^\pi)^t}\le \frac{1}{2} \right\}  , \quad t^{\pi,i}_{\mathrm{cov}}=\max_{s\in \cR_i} \expec_\pi [t^i_{\mathrm{cov}} \,|\, s_0=s] \quad \text{and} \quad t^\pi_{\mathrm{cov}}=\max_{1\le i \le m} t^{\pi, i}_{\mathrm{cov}}\]
where $\bar{T}^\pi$ is transient matrix and $t^{i}_{\mathrm{cov}}:=\inf\{t\ge 0: \{s_j\}^{t-1}_{j=0}=\mathcal{R}_i \}$. It is known that these quantities are finite when the state space is finite \citep{lee2025policy, levin2017markov}. 
\begin{lemma}\label{lem:class}
Consider a multichain MDP.  Let $ \delta>0$. Given $\pi \in \Pi_+$, set $M_1= \lceil t_{\frac{1}{2}, \pi} \log (\frac{2}{\delta})\rceil$ and $M_2= \lceil et^\pi_{\mathrm{cov}}\log \left(\frac{2}{\delta}\right)\rceil$. With probability $1-\delta$, for a generated single trajectory with length $M_1+M_2$,  $(s_0, a_0, s_1, a_1, \dots, s_{M_1+M_2-1}, a_{M_1+M_2-1})$, $\{s_j\}^{M_1+M_2-1}_{j=M_1} = \mathcal{R}^\pi_i$ for some $i$.
\end{lemma}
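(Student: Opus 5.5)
We split the trajectory into two phases and bound each failure event by $\delta/2$, then combine with a union bound. In the first phase (the first $M_1$ steps) the chain leaves the transient set $\cT$ and enters some recurrent class $\cR_i$. In the second phase (the next $M_2$ steps) the chain, now trapped in $\cR_i$ because recurrent classes are closed, visits every state of $\cR_i$. Since the chain cannot leave $\cR_i$, the visited set in the second phase is contained in $\cR_i$. Covering then gives $\{s_j\}_{j=M_1}^{M_1+M_2-1}=\cR_i$.

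\emph{Phase 1.} The key observation is that the probability of still being in $\cT$ at time $t$, starting from $s_0$, equals $e_{s_0}^\top(\bar T^\pi)^t\mathbf{1}_{\cT}$. This is because $\bar T^\pi$ restricts $P^\pi$ to transitions within $\cT$, and once the chain leaves $\cT$ it never returns. The quantity is therefore at most $\infn{(\bar T^\pi)^t}$. By the definition of $t_{\frac12,\pi}$ and submultiplicativity of the induced $\infty$-norm, we get $\infn{(\bar T^\pi)^{k t_{\frac12,\pi}}}\le 2^{-k}$. With $k=\lceil\log_2(2/\delta)\rceil$ the failure probability is at most $\delta/2$. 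There is a small mismatch here: the statement uses $\log$, presumably natural, and natural log gives the weaker bound $2^{-\log(2/\delta)}=(\delta/2)^{\log 2}$. I would handle this either by reading $\log$ as $\log_2$ or by absorbing the constant, since $t_{\frac12,\pi}\log(2/\delta)\cdot(1/\log 2)$ differs only by a constant factor. If a tight constant is needed, I would replace the halving argument with the sharper bound $\infn{(\bar T^\pi)^t}\le e^{-\lfloor t/t_{\frac12}\rfloor\log 2}$ and track constants. Since $\cT$ can only be exited, at time $M_1$ the state lies in some $\cR_i$ with probability at least $1-\delta/2$. If $s_0$ is already recurrent, this phase is trivial.

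\emph{Phase 2.} Condition on $s_{M_1}\in\cR_i$ and apply the strong Markov property. By Markov's inequality, for any starting state in $\cR_i$ the cover time exceeds $e\,t^{\pi}_{\mathrm{cov}}$ with probability at most $1/e$. Now split the $M_2$ steps into $\lceil\log(2/\delta)\rceil$ blocks of length $\lceil e\,t^\pi_{\mathrm{cov}}\rceil$. At the start of each block, restart the cover-time argument from the current state, which stays in $\cR_i$ by closedness. Each block independently fails to cover $\cR_i$ with conditional probability at most $1/e$, where $t^\pi_{\mathrm{cov}}$ is the max over starting states, so the bound holds uniformly. Hence the probability that none of the blocks covers $\cR_i$ is at most $e^{-\log(2/\delta)}=\delta/2$. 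If any single block covers $\cR_i$, the whole window covers it.

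A union bound over the two failure events gives success probability at least $1-\delta$. The main obstacle is bookkeeping rather than substance. It consists of two parts: justifying that the escape probability is controlled by $\infn{(\bar T^\pi)^t}$ (using the block structure of $P^\pi$ and Fact~\ref{fact:classification}), and matching the ceiling and log constants in $M_1,M_2$ to the block-restart argument.
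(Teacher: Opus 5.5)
Your proposal is essentially the paper's proof. Escape from $\cT$ is controlled through $\infn{(\bar T^\pi)^t}$ and the transient half-life, covering uses Markov's inequality on the cover time iterated over restarted blocks (the paper uses blocks of length $3t^\pi_{\mathrm{cov}}$ via Lemma~\ref{lem:cl} and Corollary~\ref{cor:cl}), and the two $\delta/2$ failure events are combined at the end.

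The log-base mismatch you flag in Phase~1 is genuine, and the paper's proof glosses over it too: it asserts $\infn{(\bar T^\pi)^{k}}<\delta$ for $k=\lceil t_{\frac12,\pi}\log(1/\delta)\rceil$. With natural log the lemma as stated actually fails. For example, take a single transient state with self-loop probability $1/2$ feeding an absorbing state, and $\delta=0.01$; then $t_{\frac12,\pi}=1$, $M_1=6$ and $\mathrm{Prob}(s_{M_1}\in\cT)=2^{-6}>\delta$. So your repair (reading $\log$ as $\log_2$ in $M_1$, or enlarging $M_1$ by the factor $1/\log 2$) is exactly what is needed.
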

This lemma shows that by sampling a trajectory of length $M_1 + M_2$ starting from an arbitrary state $s_0$, we can recover the complete set of states in a recurrent class. Moreover, by checking whether $s_0$ belongs to this set, we can determine the classification of $s_0$. Repeating this procedure for the remaining unclassified states under a generative model allows us to identify the classification of all states with sample complexity
\[
\mathcal{O}\!\left((|\mathcal{T}| + m)\,(t_{\frac{1}{2},\pi} + t^\pi_{\mathrm{cov}})\,
\log\!\left(\frac{|\mathcal{T}| + m}{\delta}\right)\right),
\]
where $m$ denotes the number of recurrent classes.

\subsection{Sample complexity of  stochastic $\alpha$-clipped policy mirror ascent}

Based on results in previous sections, we now present stochastic $\alpha$-clipped policy mirror ascent:
\begin{algorithm}[H]
\caption{Stochastic $\alpha$-clipped policy mirror ascent} 
\label{alg:spmd}
\begin{algorithmic}
\STATE \textbf{Input:} 
$ \alpha \in (0,1/|\mathcal{A}|), K, \pi_0\in \Pi_\alpha, \{\eta_k\}^{K-1}_{k=0} \subset (0,\infty),D(\cdot, \cdot), \{N_k, H_k, N'_k,  H'_k\}^{K-1}_{k=0}$
\FOR{$k=0,1,\ldots,K-1$}
    \STATE 
     $\hat{G}^{\pi_k}
        =\text{Critic}(\pi_k, N_k, H_k, N'_k, H'_k)$
        \FOR{$s \in \cS$}
      \STATE $\pi_{k+1}(s,\cdot)=
\argmax_{p\in \cM_\alpha (\cA)} \left\{\eta_k \sum_{a \in \cA}\hat{G}^{\pi_{k}}(s,a)p(a)-\,D\big(p(\cdot),\pi_{k}(\cdot \,|\, s)\big)\right\}$
\ENDFOR
\ENDFOR
\STATE \textbf{Output:} $\pi_K$ 
\end{algorithmic}
\end{algorithm}


\begin{theorem}\label{thm:spmg}
    Consider a multichain MDP. Let $\epsilon>0, \delta>0$. For any $\pi, \pi_0 \in \Pi_\alpha$ and $\mu\in \cM_+(\cS)$, with probability $1-\delta$, the iterates of stochastic $\alpha$-clipped policy mirror ascent with constant step size $\eta>0$ and $K=2\left( \frac{D_{\rho_\mu^{\pi_\alpha}}({\pi_\alpha}, \pi_0)}{\eta}
+  C_
\alpha (J_\mu^{\pi_\alpha}-J_\mu^{\pi_0})\right)/\epsilon$ satisfies $J_\mu^{\pi_\alpha}-J_\mu^{\pi_k}\le \epsilon$ with sample complexity 
\[\widetilde{\mathcal{O}}\Bigg(\frac{ t_{\mathrm{tar}}^3 \infn{Q^\pi}^4 R^{9} C_\alpha^6\left( \frac{D_{\rho_\mu^{\pi_\alpha}}(\pi_\alpha, \pi_0)}{\eta}\right)^6 B_\alpha^6|\cS||\cA|}{\epsilon^{12}}  \Bigg)\]
and with adaptive step size $\eta_{k+1}(C_\alpha  - 1) \ge \eta_k C_\alpha >0$ and $K =\frac{\log \left(2(J_\mu^{\pi_\alpha}- J_\mu^{\pi_{0}}
+ \frac{D_{\rho_\mu^{\pi_\alpha}}(\pi_\alpha, \pi_{0})}{\eta_0(C_\alpha-1)})/\epsilon
\right)}{ \log(C_\alpha /  (C_\alpha-1)) }$ satisfies $J_\mu^{\pi_\alpha}-J_\mu^{\pi_k}\le \epsilon$ with sample complexity 
\[\tilde{O}\bigg( \frac{ t_{\mathrm{tar}}^3 \infn{Q^\pi}^4 R^3 C_\alpha^6 B^6_\alpha|\cS||\cA|}{ \epsilon^6}\bigg)\]
where $\widetilde{\mathcal{O}}$ ignores all logarithmic factors, $\infn{Q^{\pi}}= \underset{0\le k \le K-1}{\max}\infn{Q^{\pi_{k}}}$,  and $t_{\mathrm{tar}}= \underset{0\le k \le K-1}{\max}t_{\mathrm{tar}}^{\pi_{k}}$.
\end{theorem}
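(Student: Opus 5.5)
The proof has two parts. First, an \emph{inexact} version of Theorems~\ref{thm:ngd} and~\ref{thm:ngd_linear} in which $G^{\pi_k}$ is replaced by an estimate $\hat G^{\pi_k}$ with $\|\hat G^{\pi_k}-G^{\pi_k}\|_\infty\le \epsilon_k$. Second, a per-iteration accuracy requirement, combined with Theorem~\ref{thm:PE} and a union bound over the $K$ iterations (each critic call run at confidence $\delta/K$). The classification of states is assumed known from Lemma~\ref{lem:class}, at a cost that is lower order and absorbed into $\widetilde{\mathcal O}$.

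\textbf{Inexact descent and three-point steps.} The exact proofs rely on two facts:
\begin{itemize}
\item the three-point (Bregman proximal) inequality for each state-wise update over the convex set $\cM_\alpha(\cA)$;
\item the performance difference lemma (Lemma~\ref{lem:pdl}), rewritten as $J^\pi_\mu-J^{\pi'}_\mu=\sum_s\rho^\pi_\mu(s)\langle \pi(\cdot|s)-\pi'(\cdot|s),G^{\pi'}(s,\cdot)\rangle$.
\end{itemize}
With $\hat G$, the three-point inequality holds verbatim, since it is a property of the prox step. Translating back to $G$ then costs an additive error at each state of
\[
\eta_k|\langle p-p',\hat G-G\rangle|\le 2\eta_k\epsilon_k .
\]

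In the monotonicity step (analogue of Lemma~\ref{lem:ngd_descent}), weighting by $\rho^{\pi_{k+1}}_\mu$ gives
\[
J^{\pi_{k+1}}_\mu\ge J^{\pi_k}_\mu-2\epsilon_k\|\rho^{\pi_{k+1}}_\mu\|_1\ge J^{\pi_k}_\mu-2B_\alpha\epsilon_k
\]
by Lemma~\ref{lem:bound}. In the comparison with $\pi_\alpha$, the change of measure from $\rho^{\pi_\alpha}_\mu$ to $\rho^{\pi_{k+1}}_\mu$ costs the factor $C_\alpha$, so the error picks up a factor of order $C_\alpha B_\alpha\epsilon_k$. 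Telescoping as in \cite{xiao2022convergence} yields the following bounds.
\begin{itemize}
\item Constant step:
\[
J^{\pi_\alpha}_\mu-J^{\pi_K}_\mu\le \frac{1}{K+1}\Big(\frac{D_{\rho^{\pi_\alpha}_\mu}(\pi_\alpha,\pi_0)}{\eta}+C_\alpha(J^{\pi_\alpha}_\mu-J^{\pi_0}_\mu)\Big)+c\,C_\alpha B_\alpha\max_k\epsilon_k .
\]
The error term is not divided by $K$, because each step contributes roughly $\epsilon_k$ to the weighted sum and the sum is then averaged.
\item Adaptive step: the same structure, with a geometric factor $(1-1/C_\alpha)^K$ and an error term bounded by $c\,C_\alpha^2B_\alpha\max_k\epsilon_k$ after summing the geometric series.
\end{itemize}

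\textbf{Choosing $K$ and $\epsilon_k$.} Take $K$ as in the statement, so the deterministic part is at most $\epsilon/2$. Then choose $\epsilon_k$ so that the error part is at most $\epsilon/2$:
\begin{itemize}
\item adaptive step: $\epsilon_k\asymp \epsilon/(C_\alpha B_\alpha)$ (up to the extra $C_\alpha$ and $R$ factors);
\item constant step: the stated bound has $\epsilon^{-12}$ and an extra $(D/\eta)^6$ factor, so here the error must be summed rather than averaged. Requiring $K\epsilon_k B_\alpha C_\alpha\lesssim\epsilon$ gives $\epsilon_k\asymp \epsilon^2/(C_\alpha B_\alpha D/\eta)$.
\end{itemize}
Plugging these into Theorem~\ref{thm:PE}, with $t^{\pi_k}_{\mathrm{tar}}\le t_{\mathrm{tar}}$, $\|Q^{\pi_k}\|_\infty\le\|Q^\pi\|_\infty$, and $\log(2|\cS||\cA|K/\delta)$ absorbed, gives a per-iteration cost of order $t_{\mathrm{tar}}^3\|Q^\pi\|_\infty^4R^3|\cS||\cA|/\epsilon_k^6$. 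Multiplying by $K$ produces the two stated bounds; in the adaptive case $K$ is logarithmic, and in the constant case the $R$ powers collect from $R$-dependent normalizations of $\epsilon_k$.

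\textbf{Main obstacle.} The delicate step is the inexact three-point argument. One has to show that the errors are weighted by $\rho$-measures whose total mass is controlled by $B_\alpha$ even though $\delta^\pi_\mu$ is not a probability measure, and that the change of measure uses only $C_\alpha$ on $\Pi_\alpha$. I would first redo the exact proof of Theorem~\ref{thm:ngd} keeping an additive slack term at each inequality, and then track which measure multiplies it.
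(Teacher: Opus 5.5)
Your route matches the paper's. You prove an inexact version of Theorems~\ref{thm:ngd} and~\ref{thm:ngd_linear} under a uniform critic error $\bar\epsilon=\max_k\epsilon_k$ (the paper's Theorem~\ref{thm:inexact}), then apply Theorem~\ref{thm:PE} with a union bound over the $K$ critic calls. Your approximate monotonicity $J^{\pi_{k+1}}_\mu\ge J^{\pi_k}_\mu-2B_\alpha\bar\epsilon$ and the $C_\alpha$ change of measure are exactly the paper's.

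\textbf{Constant step.} This is where the gap is. The error term $cC_\alpha B_\alpha\bar\epsilon$ you display is what the telescoped sum gives for the \emph{average} $\frac{1}{K+1}\sum_{i\le K}U_i$ (or the best iterate), where $U_i=J^{\pi_\alpha}_\mu-J^{\pi_i}_\mu$. It is not a bound for the output $\pi_K$. Passing to the last iterate requires monotonicity, and you only have $U_{i+1}\le U_i+2B_\alpha\bar\epsilon$. Hence $U_k\le U_i+2(k-i)B_\alpha\bar\epsilon$, and averaging over $i\le k$ adds $kB_\alpha\bar\epsilon$. That is why the paper's Theorem~\ref{thm:inexact} carries the error $(2C_\alpha+k+2)B_\alpha\bar\epsilon$. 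It is also the real reason one must take $\bar\epsilon=O(\epsilon/(KB_\alpha))$. You instead assert a $K$-independent error, then impose $K\epsilon_kB_\alpha C_\alpha=O(\epsilon)$ only because the target rate has $\epsilon^{-12}$. That step is circular and contradicts the bound you just wrote.

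\textbf{Adaptive step.} The error should be $4C_\alpha B_\alpha\bar\epsilon$, not of order $C_\alpha^2B_\alpha\bar\epsilon$. Start from $C_\alpha(U_{k+1}-U_k)+U_k\le\frac{1}{\eta_k}(D_k-D_{k+1})+2(C_\alpha+1)B_\alpha\bar\epsilon$, with $D_k=D_{\rho^{\pi_\alpha}_\mu}(\pi_\alpha,\pi_k)$, and divide by $C_\alpha$ \emph{before} recursing. The per-step slack becomes $2(1+1/C_\alpha)B_\alpha\bar\epsilon\le 4B_\alpha\bar\epsilon$, and Fact~\ref{fact:seq} with ratio $1-1/C_\alpha$ multiplies it by $C_\alpha$ only once. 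With your $C_\alpha^2B_\alpha$ bound you would need $\bar\epsilon\asymp\epsilon/(C_\alpha^2B_\alpha)$ and would end with $C_\alpha^{12}$ rather than the stated $C_\alpha^6$. Your phrase ``up to the extra $C_\alpha$'' hides exactly this factor.

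\textbf{Final count.} Your closing arithmetic does not close as claimed. With the constant-step choice of $\epsilon_k$, the per-iteration cost from Theorem~\ref{thm:PE} is already of order $\epsilon^{-12}$, so multiplying by $K$ gives $\epsilon^{-13}$. In the adaptive case, $K\approx C_\alpha\log(\cdot)$ is not purely logarithmic, since $\log(C_\alpha/(C_\alpha-1))\approx 1/C_\alpha$. The paper's proof and statement absorb these factors of $K$ in the same way. This is therefore a looseness in the stated bounds, not a flaw specific to your strategy.
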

We point out that to the best of our knowledge, Theorem~\ref{thm:spmg} is the first sample complexity results of policy gradient methods for average-reward multichain MDPs. Lastly, we briefly mention that refined analysis for weakly communicating MDPs is provided in Appendix~\ref{weak_mdp}.

\section{Conclusion}
In this work, we present the first analysis of policy gradient methods for average-reward multichain MDPs using the notion of recurrent and transient visitation measures based on the invariance of the classification of states. Our results and proof techniques open the door to future work on policy gradient algorithms for the average-reward multichain MDP setup.

One future direction is to further improve the sample complexity by incorporating variance reduction techniques \citep{li2025stochastic, lee2025near} and to fully characterize the sample complexity by quantifying the dependence between $\alpha$ and $\epsilon$ in our theorems. Developing trust region methods \citep{schulman2015trust, zhang2021policy} for average-reward multichain MDPs using our machinery would be an interesting direction as well.




\bibliography{colt2026}

\newpage 

\appendix

\section{Preliminaries}\label{app:pre}
We say  function $h$ is essential smoothness if $h$ is differentiable and
$\|\nabla h(x_k)\|\to\infty$ for every sequence $\{x_k\}$ converging to a
boundary point of $\dom h$.
We say function $h$ is of Legendre if it is essentially smooth and strictly
convex in the (relative) interior of $\dom h$, 

\begin{fact}[Three-point descent lemma]\cite[Lemma 6]{xiao2022convergence}\label{fact:bregman}
Suppose that $\,\mathcal{C}\subset\mathbb{R}^n$ is a closed convex set,
$\phi:\mathcal{C}\to\mathbb{R}$ is a proper, closed, and convex function,
$D(\cdot,\cdot)$ is the Bregman divergence generated by a function $h$ of
Legendre type and $\rint \dom h \cap \mathcal{C}\neq\varnothing$.
For any $x\in \rint \dom h$, let
\[
x^{+}=\argmin_{u\in\mathcal{C}} \big\{\, \phi(u)+D(u,x) \,\big\}.
\]
Then, $x^{+}\in \rint \dom h \cap \mathcal{C}$ and for any $u\in\mathcal{C}$,
\[
\phi(x^{+}) + D(x^{+},x) \;\le\; \phi(u) + D(u,x) - D(u,x^{+}).
\]
\end{fact}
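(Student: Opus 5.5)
The plan is the standard mirror-descent argument: first show that $x^+$ exists and lies in $\rint\dom h\cap\mathcal{C}$, then write down the first-order optimality condition at $x^+$, and finally combine it with convexity of $\phi$ and the three-point identity for Bregman divergences.

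\textbf{Existence and location of $x^+$.} The objective $\Phi(u)=\phi(u)+D(u,x)+\iota_{\mathcal{C}}(u)$ is proper, closed and convex. Since $h$ is strictly convex, $D(\cdot,x)$ is strictly convex, so $\Phi$ has at most one minimizer.
\begin{itemize}
\item For existence I would use that $D(\cdot,x)$ is level-bounded, which holds for Legendre $h$ in the settings of interest (for example on the compact set $\mathcal{M}(\cA)$, where this is immediate).
\item To show $x^+\in\rint\dom h$ I would argue by contradiction using essential smoothness. Suppose $x^+$ lies on the relative boundary of $\dom h$. Pick $z\in\rint\dom h\cap\mathcal{C}$ and consider the segment $u_t=x^++t(z-x^+)$.
\item The one-sided directional derivative of $D(\cdot,x)$ at $x^+$ along $z-x^+$ equals $-\infty$. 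This follows because $\|\nabla h\|\to\infty$ near the boundary forces $\langle\nabla h(u_t),z-x^+\rangle\to-\infty$ as $t\downarrow0$; this is the standard Rockafellar argument for Legendre functions.
\item The directional derivative of $\phi$ along the same direction is below $+\infty$, given that $\dom\phi$ is all of $\mathcal{C}$ and $\phi$ is convex. Hence $\Phi(u_t)<\Phi(x^+)$ for small $t$, which contradicts minimality.
\end{itemize}

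\textbf{Optimality condition.} Once $x^+\in\rint\dom h$, the function $h$ is differentiable there and $\nabla_u D(u,x)=\nabla h(u)-\nabla h(x)$. The subdifferential sum rule applies because of the qualification $\rint\dom h\cap\mathcal{C}\neq\varnothing$. It gives some $g\in\partial\phi(x^+)$ such that
\[
\langle g+\nabla h(x^+)-\nabla h(x),\,u-x^+\rangle\ \ge 0\qquad\forall u\in\mathcal{C}.
\]
Convexity of $\phi$ then yields $\phi(u)\ge\phi(x^+)+\langle g,u-x^+\rangle\ge \phi(x^+)-\langle\nabla h(x^+)-\nabla h(x),u-x^+\rangle$.

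\textbf{Conclusion.} Expanding the definition of $D$ gives the three-point identity
\[
D(u,x)-D(u,x^+)-D(x^+,x)=\langle\nabla h(x^+)-\nabla h(x),\,u-x^+\rangle .
\]
Substituting this into the previous inequality gives exactly $\phi(x^+)+D(x^+,x)\le\phi(u)+D(u,x)-D(u,x^+)$. For $u\notin\dom h$ the right-hand side is $+\infty$ by convention, so those points need no separate treatment.

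\textbf{Main obstacle.} I expect the hardest step to be showing that $x^+$ lies in $\rint\dom h$, since this is what makes $\nabla h(x^+)$, and hence $D(u,x^+)$, well defined. The justification of the sum rule on a constrained domain is the other delicate point. I would handle both by appealing to Rockafellar's theory of Legendre functions (Theorem 26.1 and Theorem 23.8), rather than redoing the convex-analysis details.
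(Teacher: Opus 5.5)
The paper does not prove this fact; it imports it from \citet[Lemma~6]{xiao2022convergence}. Your proposal is the standard argument behind that lemma: essential smoothness puts $x^+$ in $\rint\dom h$, then first-order optimality at $x^+$, then the three-point identity. It is correct in substance.

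Two minor points are worth tightening.

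First, existence of $x^+$ is presupposed by the statement and does not follow from level-boundedness of $D(\cdot,x)$ in general. For $h(u)=e^u$, $\phi(u)=e^{x}u$ and $\mathcal{C}=\mathbb{R}$, one gets $\phi(u)+D(u,x)=e^u+\mathrm{const}$, which has no minimizer. So that bullet should simply be dropped; for the compact $\cM_\alpha(\cA)$ used in the paper existence is trivial anyway.

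Second, you can avoid the sum rule and its qualification entirely. For $u\in\mathcal{C}\cap\dom h$ set $u_t=x^++t(u-x^+)$. Minimality of $x^+$ and convexity of $\phi$ give $0\le \phi(u)-\phi(x^+)+t^{-1}\bigl(D(u_t,x)-D(x^+,x)\bigr)$. Letting $t\downarrow 0$, using differentiability of $h$ at $x^+\in\rint\dom h$, yields $\phi(u)\ge\phi(x^+)-\langle\nabla h(x^+)-\nabla h(x),u-x^+\rangle$ directly, and your three-point identity then finishes the proof. This matters when $\dom h$ is lower-dimensional, as for the paper's $h$ on $\cM(\cA)$. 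There the hypothesis $\rint\dom h\cap\mathcal{C}\neq\varnothing$ does not by itself give the condition $\rint\dom h\cap\rint\mathcal{C}\neq\varnothing$ required by Rockafellar's Theorem~23.8, unless you invoke polyhedrality of $\mathcal{C}$.
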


In our setup, $\mathcal{C}=\cM_
\alpha(\cA)$,  $\phi(p)=-\eta_k \langle G^{\pi_k}(\cdot,s), p(\cdot)\,\rangle$, and $h$ is the negative-entropy function, which is also
of Legendre type, satisfying $\rint\,\operatorname{dom}h \cap \mathcal{C}
= \cM_
\alpha(\cA) $.
Therefore, if we start with an initial point in $ \cM_
\alpha(\cA)$, then every
iterate will stay in $ \cM_
\alpha(\cA)$.

\begin{fact}[Hoeffding inequality]
      Let $X_1, \dots, X_n$ be indepedent random variables such that $ a_i \le X_i  \le b_i$ for all $i$. Then
\[ \mathbb{P}\left( \left|\frac{1}{n}\sum^n_{i=1} X_i- \expec \left[\frac{1}{n}\sum^n_{i=1} X_i\right]\right| \ge \epsilon\right) \le 2 \text{exp} \left(-\frac{2n^2 \epsilon^2}{\sum^n_{i=1}(b_i-a_i)^2}\right).\]
\end{fact}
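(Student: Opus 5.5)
We prove the Hoeffding inequality by the Chernoff (exponential moment) method combined with Hoeffding's lemma on the moment generating function of a bounded random variable. Set $Y_i = X_i - \expec[X_i]$. Then $Y_i$ has mean zero, takes values in an interval of length $b_i - a_i$, and the $Y_i$ are independent. Write $S = \sum_{i=1}^n Y_i$. The event in the statement is $\{|S| \ge n\epsilon\}$, so it suffices to prove the one-sided bound
\[
\mathbb{P}(S \ge t) \le \exp\Big(-\frac{2t^2}{\sum_i (b_i-a_i)^2}\Big), \qquad t = n\epsilon.
\]
The same bound for $-S$ follows by applying it to the variables $-Y_i$, whose ranges have the same lengths. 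A union bound over the two tails then gives the factor $2$.

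The key step, and the main obstacle, is \emph{Hoeffding's lemma}: if $Y$ has mean zero and $Y \in [a,b]$, then
\[
\expec\big[e^{\lambda Y}\big] \le \exp\Big(\frac{\lambda^2 (b-a)^2}{8}\Big) \qquad \text{for all } \lambda \in \real .
\]
The proof proceeds in three moves.
\begin{enumerate}
\item By convexity of $y \mapsto e^{\lambda y}$ on $[a,b]$, $e^{\lambda y} \le \frac{b-y}{b-a}e^{\lambda a} + \frac{y-a}{b-a}e^{\lambda b}$. Taking expectations and using $\expec[Y]=0$ gives $\expec[e^{\lambda Y}] \le e^{L(u)}$, where $u = \lambda(b-a)$, $p = -a/(b-a) \in [0,1]$, and $L(u) = -pu + \log(1-p+pe^{u})$.
\item A direct computation gives $L(0) = L'(0) = 0$.
\item The second derivative is $L''(u) = q(1-q)$ with $q = pe^u/(1-p+pe^u) \in [0,1]$, so $L''(u) \le 1/4$. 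Taylor's theorem with remainder then yields $L(u) \le u^2/8$, which is the claimed bound.
\end{enumerate}

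With the lemma in hand, the conclusion is routine. For $\lambda > 0$, Markov's inequality applied to $e^{\lambda S}$, together with independence (which factorizes the moment generating function) and the lemma applied to each factor, gives
\[
\mathbb{P}(S\ge t) \le e^{-\lambda t}\prod_{i=1}^n \expec\big[e^{\lambda Y_i}\big] \le \exp\Big(-\lambda t + \frac{\lambda^2}{8}\sum_{i=1}^n (b_i-a_i)^2\Big).
\]
We minimize the exponent over $\lambda$, choosing $\lambda = 4t/\sum_i (b_i-a_i)^2$. This gives $\exp\big(-2t^2/\sum_i(b_i-a_i)^2\big)$. Substituting $t = n\epsilon$ yields exactly the exponent $-2n^2\epsilon^2/\sum_i (b_i-a_i)^2$ in the statement. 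Adding the bound for the lower tail completes the proof.
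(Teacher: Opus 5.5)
The paper gives no proof of this statement; it simply quotes Hoeffding's inequality as a standard fact. Your argument is the standard textbook proof and is correct: centering, Hoeffding's lemma via the convexity bound and $L''(u)=q(1-q)\le 1/4$, the Chernoff bound optimized at $\lambda = 4t/\sum_i (b_i-a_i)^2$ with $t=n\epsilon$, and a union bound over the two tails. The only implicit assumptions worth stating are $\epsilon>0$ and $\sum_i (b_i-a_i)^2>0$. In the lemma, $a\le 0\le b$ follows automatically from $\expec[Y]=0$, and the degenerate case $a=b$ is trivial since then $Y\equiv 0$.
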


\begin{fact}[Target time lemma]\cite[Corollary~3]{roberts1997shift}\label{fact:hittting} For $\pi \in \Pi_+$ and $\forall s \in \cR_i$, 
\[
\left\|
\frac{1}{k}\sum_{j=1}^{k}\bigl(R_i^{\pi}\bigr)^{j}(\cdot, s)\;-\;g_i^{\pi}(\cdot)
\right\|_{1}
\;\le\;
\frac{2 t^{\pi,i}_{\mathrm{tar}}}{k},
\]
where $g^\pi_i$ is stationary distribution of $R_i^{\pi}$.
\end{fact}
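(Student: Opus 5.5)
The plan is a shift-coupling argument in the spirit of \citet{roberts1997shift}. I read the vector $\frac{1}{k}\sum_{j=1}^k (R_i^\pi)^j(\cdot,s)$ as the Ces\`aro-averaged distribution of the chain on $\cR_i$ started at $s$, i.e.\ the row $e_s^\top (R_i^\pi)^j$. For a column reading, one applies the same argument to the row distributions, so I argue only for rows.

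\textbf{Coupling construction.} Fix $\pi\in\Pi_+$, a class $\cR_i$ and $s\in\cR_i$, and let $(X_t)_{t\ge0}$ be the chain with kernel $R_i^\pi$ and $X_0=s$. Independently of the chain, draw a target state $\tau\sim g_i^\pi$ and set $T=t_\tau=\inf\{t\ge0: X_t=\tau\}$. By definition of the expected target time, $\expec[T]=\sum_{s'}g_i^\pi(s')\expec_\pi[t_{s'}\mid s_0=s]=t^{\pi,i}_{\mathrm{tar}}$, which is finite and independent of $s$.

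\textbf{Stationarity after $T$.} We have $X_T=\tau$ exactly, so $\mathrm{Prob}(X_T=y)=g_i^\pi(y)$. Since $T$ is a stopping time for the filtration enlarged by the independent variable $\tau$, the strong Markov property shows that $(X_{T+n})_{n\ge0}$ is a chain started from its stationary law. Hence $\expec[f(X_{T+j})]=g_i^\pi(f)$ for every $j$ and every $f:\cR_i\to[-1,1]$.

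\textbf{Comparison of the two averages.} Fix such an $f$. Because $|f|\le1$,
\[
\Big|\sum_{j=1}^k f(X_j)-\sum_{j=1}^k f(X_{T+j})\Big|\le 2\min(T,k)\le 2T .
\]
If $T\le k$, the two sums share the terms with indices $T+1,\dots,k$, and what remains is $\sum_{j=1}^{T}f(X_j)-\sum_{j=k+1}^{k+T}f(X_j)$, which has at most $2T$ unit-bounded terms. If $T>k$, the left side is trivially at most $2k$. Taking expectations gives
\[
\Big|\frac1k\sum_{j=1}^k \expec_s f(X_j)-g_i^\pi(f)\Big|\le \frac{2\expec T}{k}=\frac{2t^{\pi,i}_{\mathrm{tar}}}{k}.
\]
The $\ell_1$ norm of a signed measure is the supremum of its integral against such $f$, so taking the supremum over $|f|\le1$ yields the claimed bound.

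\textbf{Main obstacle.} The delicate step is justifying that $X_T\sim g_i^\pi$ and that the post-$T$ process is stationary. This needs $\tau$ to be independent of the chain and $T<\infty$ almost surely. Almost-sure finiteness holds because $\cR_i$ is a finite irreducible recurrent class under $\pi\in\Pi_+$ (Fact~\ref{fact:classification}). Everything else is elementary counting.
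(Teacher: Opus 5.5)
Your proof is correct, and it is essentially the argument behind the result the paper only cites, \cite[Corollary~3]{roberts1997shift}. That result is a shift coupling against a stationary random target; total variation is half the $\ell_1$ norm, which accounts for the factor $2$. One correction: drop the remark about a column reading, because $\frac{1}{k}\sum_{j=1}^{k}(R_i^\pi)^j(\cdot,s)$ read as a column converges to $g_i^\pi(s)\mathbf{1}$ rather than $g_i^\pi$, so its left side does not even tend to zero unless $g_i^\pi$ is uniform; the row reading you prove is the meaningful one and is what Lemma~\ref{lem:tar} uses.
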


\section{Omitted proofs of Section 2}

\subsection{Proof of Lemma~\ref{lem:pdl}}
We first prove the following lemma. 
\begin{lemma}\label{lem:con}
    For any $\pi \in \Pi_+$, $a,a'\in \cA$ and $s,s' \in  \cR_i$, $K^\pi(s,a)=K^\pi(s',a')$ and $ J^\pi(s)=J^\pi(s')$.
\end{lemma}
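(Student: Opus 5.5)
The plan is to prove the statement about $J^\pi$ first and then deduce the one about $K^\pi$ from $K^\pi = PJ^\pi$.

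For $J^\pi$, I would use the canonical decomposition of $P^\pi$ together with Fact~\ref{fact:classification}. Fix $\pi\in\Pi_+$ and a recurrent class $\cR_i$. The rows of $P^\pi_\star$ indexed by states in $\cR_i$ are the rows of $R^\pi_{i,\star}=\mathbf{1}(g^\pi_i)^\top$, padded with zeros outside the block of $\cR_i$. So for every $s\in\cR_i$ we have $e_s^\top P^\pi_\star = (g^\pi_i)^\top$, viewed as a distribution supported on $\cR_i$, and this row does not depend on $s$. Since $J^\pi = P^\pi_\star r^\pi$, it follows that $J^\pi(s) = \sum_{s''\in\cR_i} g^\pi_i(s'') r^\pi(s'')$ for every $s\in\cR_i$. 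This gives $J^\pi(s)=J^\pi(s')$.

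For $K^\pi$, the identity $K^\pi = PJ^\pi$ gives $K^\pi(s,a)=\sum_{s''}P(s''\mid s,a)J^\pi(s'')$. The key claim is that for $s\in\cR_i$ and \emph{any} $a\in\cA$, the kernel $P(\cdot\mid s,a)$ is supported on $\cR_i$. Because $\pi\in\Pi_+$, we have $\pi(a\mid s)>0$, so $P^\pi(s,s'')\ge \pi(a\mid s)P(s''\mid s,a)>0$ whenever $P(s''\mid s,a)>0$. A recurrent class is closed under $P^\pi$ (the block structure has zeros off the diagonal blocks in the recurrent rows), so every such $s''$ lies in $\cR_i$. Hence $K^\pi(s,a)=\sum_{s''\in\cR_i}P(s''\mid s,a)\,c_i = c_i$, where $c_i$ is the common value of $J^\pi$ on $\cR_i$. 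This value is the same for all $s\in\cR_i$ and $a\in\cA$, and it equals $J^\pi(s)$ as a bonus.

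The main obstacle, mild as it is, is the closedness step: it is exactly where positivity of $\pi$ enters. For a non-positive policy, an action with $\pi(a\mid s)=0$ could lead outside $\cR_i$, and $K^\pi(s,a)$ would then differ. I would make the argument explicit via the canonical form: if $P(s''\mid s,a)>0$ for some $s''\notin\cR_i$, then $P^\pi(s,s'')>0$, which contradicts $P^\pi(s,s'')=0$ for $s\in\cR_i$, $s''\notin\cR_i$.
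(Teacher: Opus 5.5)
Your proposal is correct and matches the paper's proof essentially step for step. You get constancy of $J^\pi$ on $\cR_i$ from $R^\pi_{i,\star}=\mathbf{1}(g^\pi_i)^\top$, then use $K^\pi=PJ^\pi$ with $P(s''\mid s,a)=0$ for $s\in\cR_i$, $s''\notin\cR_i$; your derivation of that closedness from $\pi(a\mid s)>0$ spells out a step the paper only asserts.
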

\begin{proof}
 For $s \in \cR_i$ and $ s' \not\in \cR_i$, $P(s' \,|\, s,a )=0 $.  For any $s,s' \in \cR_i$, $J^\pi(s)=J^\pi(s')$ since $R^\pi_{i,\star}= \mathbf{1} g_i^\top$ where $g_i$ is stationary distribution of $R^\pi_{i}$. Thus,   $K^\pi(s,a)= P J^\pi(s,a) = \sum_{s'' \in \cR_{i}} P(s'' \,|\, s,a ) J^\pi (s'') = J^\pi(s)=J^\pi(s')$ for any $a \in \cA$.  
\end{proof}
Now we prove  Lemma~\ref{lem:pdl}. 
\begin{proof}
First, we have  $J^\pi-J^{\pi'}  =P^{\pi}_\star (J^{\pi} -J^{\pi'}) + (P^{\pi}_\star-I) J^{\pi'}$ since $P^\pi_\star P^\pi_\star=P^\pi_\star$. 

For the first term, 
\begin{align*}
    P^{\pi}_\star (J^{\pi} -J^{\pi'})&=    P^\pi_{\star} (r^\pi- J^{\pi'})\\&=    P^\pi_{\star} (r^\pi+V^{\pi'}-P^{\pi'} V^{\pi'} - r^{\pi'})\\&=    P^\pi_{\star} (r^\pi+P^\pi V^{\pi'}-P^{\pi'} V^{\pi'} - r^{\pi'})\\&=    P^\pi_{\star} (\Theta_\pi-\Theta_{\pi'})(r+P V^{\pi'})\\&=    P^\pi_{\star} (\Theta_\pi-\Theta_{\pi'})(K^{\pi'}+Q^{\pi'})
\end{align*}
where first and third equalities come from property of limiting matrix, $P^\pi_\star=P^\pi_\star P^\pi_\star= P^\pi_\star P^\pi$, second and last equalities are from Bellman equation, and fourth equality comes from that $\Theta_\pi \in  \real^{|\cS| \times |\cS||\cA|} $ is matrix form of policy $\pi$ satisfying $\Theta_\pi P = P^{\pi}$ and $\Theta_\pi r = r^{\pi}$. Then, for given $\mu$ with full support and $s \in \mathcal{R}$, we have
\begin{align*}
    \mu^\top P^{\pi}_\star (J^{\pi} -J^{\pi'})&=   \mu^\top P^\pi_{\star} (\Theta_\pi-\Theta_{\pi'})(K^{\pi'}+Q^{\pi'})\\& =\sum_{s \in \cS} d^\pi_{\mu}(s)\sum_{a \in \cA} (\pi(a \,|\, s)-\pi'(a \,|\, s))(K^{\pi'}(s,a)+Q^{\pi'}(s,a))\\& =\sum_{s \in \cR} d^\pi_{\mu}(s)\sum_{a \in \cA} (\pi(a \,|\, s)-\pi'(a \,|\, s))(K^{\pi'}(s,a)+Q^{\pi'}(s,a))\\& =\sum_{s \in \cR} d^\pi_{\mu}(s) \sum_{a \in \cA}(\pi(a \,|\, s)-\pi'(a \,|\, s))Q^{\pi'}(s,a)
\end{align*}
where third equality comes from $d^\pi_{\mu}(s)=0$ for all $s\in \cT$ and last equality is from Lemma~\ref{lem:con}.

For the second term, if  $s \in \mathcal{R}_i$, $P^{\pi}_\star J^{\pi'} (s)=\sum_{s' \in \cR_i} g_i^\pi(s') J^{\pi'} (s')=J^{\pi'} (s) $ since
$R^\pi_{i,\star}= \mathbf{1} (g^\pi_i)^\top$ where $g^\pi_i$ is stationary distribution, $J^{\pi'}(s')=J^{\pi'}(s'')$ for $s',s'' \in \cR_i$, and $\pi,\pi'$ share same recurrent class by Lemma~\ref{lem:con} and Fact~\ref{fact:classification}. Otherwise, if $s \in \cT$, define $J^\pi_{i} \in \real^{|\cR_{i}|}$ and $J^\pi_{m+1}\in \real^{|\cT|}$ for $1\le i \le m$ such that $J^\pi(s)=J^\pi_{i}(s)$ for $s \in \cR_i$ and $J^\pi(s)=J^\pi_{m+1}(s)$ for $s \in \cT$. Since $S^\pi_{i,\star} = (I-T^\pi)^{-1}S^\pi_i R^{\pi}_{i,\star}$, we have
\begin{align*}
 P^{\pi}_\star J^{\pi'}(s) &= \sum^m_{i=1} (I-T^\pi)^{-1}S^\pi_i J^{\pi'}_i (s)\\&=  (I-\bar{T}^\pi)^{-1}(P^\pi-\bar{T}^\pi) J^{\pi'}(s)    
\end{align*}
where first equality comes from Lemma~\ref{lem:con}, Fact~\ref{fact:classification}, and canonical form of $P^\pi_\star$ and $P^\pi$. This implies 
\begin{align*}
(P^{\pi}_\star-I)J^{\pi'}(s)&= (I-\bar{T}^\pi)^{-1}(P^\pi-I) J^{\pi'}(s)\\&=(I-\bar{T}^\pi)^{-1}(P^\pi-P^{\pi'}) J^{\pi'}(s)\\&=(I-\bar{T}^\pi)^{-1}(\Theta_\pi-\Theta_{\pi'}) K^{\pi'}(s)
\end{align*}
for $ s\in \cT$, where second equality is from Bellman equation. By multiplying $\mu$, we
conclude the proof. 
\end{proof}




\subsection{Proof of Theorem~\ref{thm::policy_gd}}
\begin{proof}
First, due to invaraince of transient and recurrent class, it's sufficient to consider differentiablity of $\{R^\pi_{\star, i}\}^m_{i=1}, \{S^\pi_{\star,i}\}^m_{i=1}$ since other elements of $P^{\pi }_\star $ is always zero.
The differenitablity of $R^\pi_{\star, i}$ is guaranteed by the proof of \cite[Lemma~1]{marbach2001simulation} (As a technical detail, differentiablity of $R^\pi_{\star,i}$ in \cite{marbach2001simulation} is proved under aperiodicity, but the proof also works for the unichain MDP due to uniqueness of stationary distribution \cite[Proposition 1.29]{levin2017markov}.), and differenitablity of $(I-T^\pi)^{-1}$ is guaranteed by proof of \cite[Theorem~4.7]{lee2025policy}. These imply differentiability of $S^\pi_{i,\star} = (I-T^\pi)^{-1}S^\pi_i R^{\pi}_{i,\star}$ and $P^{\pi }_\star $  as well.

For manifold $S$, we denote tangent space of $S$ as $\mathbf{T} S$ \citep{lee2003smooth}. Consider $\Theta$ a differentiable manifold and differential $\nabla_\theta \pi_\theta: \mathbf{T} \Theta \rightarrow \mathbf{T} \Pi_+$  for $\pi_\theta: \Theta \rightarrow \Pi_+$, where $ \mathbf{T} \Pi_+ =\{ u\in \real^{|\cS||\cA|} \,|\,\sum_{a\in\cA} u(a \,|\,s ) =0\, \forall s \in \cS\}$. (Here note that $\Pi_+$ is simplex embedded in Euclidean space.)  Since 
$J^\pi_\mu:  \Pi_+ \rightarrow  \real$ is smooth function by previous argument, we can also consider differential $\nabla _\pi J^\pi_\mu: \mathbf{T} \Pi_+ \rightarrow \mathbf{T} \real$ where $\mathbf{T} \real =\real$. 

By Lemma~\ref{lem:pdl}, for any $u \in \mathbf{T} \Pi_+$ and $\epsilon >0$ such that $\pi+\epsilon  u \in \Pi_+$, we have
\begin{align*}
&J_\mu^{\pi+\epsilon  u} - J_\mu^{\pi}=        \epsilon  \sum_{\substack{s \in \cR\\a \in \cA}}
 d^{\pi+\epsilon  u}_{\mu}(s) u(a \mid s)Q^{\pi}(s,a) +\epsilon 
\sum_{\substack{s \in \cT\\a \in \cA}}\delta^{\pi+\epsilon  u}_{\mu}(s)u(a \mid s) K^{\pi}(s,a)
\end{align*}
By continuity of $d_\mu^{\pi}$ and $\delta^{\pi}_{\mu}$ on $\Pi_+$ and dividing both sides by $\epsilon$ and letting $\epsilon \rightarrow 0$, differential can be formalized as  
\[\nabla _\pi J^\pi_\mu (s ,a) = d^{\pi}_{\mu}(s) Q^{\pi}(s,a) +
\delta^{\pi}_{\mu}(s) K^{\pi}(s,a).\]
Then, by chain rule,
\[\nabla _\theta J^\pi_\mu=\sum_{\substack{s \in \cR\\a \in \cA}} d^{\pi}_{\mu}(s) \nabla _\theta \pi (a \,|\, s) Q^{\pi}(s,a) +
\sum_{\substack{s \in \cT\\a \in \cA}} \delta^{\pi}_{\mu}(s)  \nabla _\theta \pi(a \,|\, s) K^{\pi}(s,a). \]

\end{proof}

We briefly note that Lemma~\ref{lem:continuity} directly follows from Theorem~\ref{thm::policy_gd}. 


\section{Omitted proofs in Section \ref{sec:3}}\label{appen:miss_3}

\subsection{Proof of Lemma~\ref{lem:bound}}
\begin{proof}
As we showed in proof of Theorem~\ref{thm::policy_gd}, $d^{\pi}$ and $\delta^{\pi}$ are continuous on $\Pi_+$. For each $i$, $\cR_i$ is single recurrent class on $\pi \in \Pi_\alpha$, so stationary distribution $g^\pi_i$ of $R^\pi_i$ is always positive  \cite[Proposition 1.19]{levin2017markov}. This implies $\min_{s \in \cR, \pi \in\Pi_\alpha} d^\pi_\mu(s) >0$  by continuity, and  $\|d^\pi_\mu\|_1=1$. On the other hand, for $s\in \cT$ , $\min_{\pi \in\Pi_\alpha} \delta^\pi_\mu(s) \ge \mu(s)$ by definition  and $\max_{\pi \in\Pi_\alpha} \delta^\pi_\mu(s)  <\infty$ by continuity. 
\end{proof}

\newpage

\subsection{Proof of Lemma \ref{lem:ngd_descent}}
We proved more detailed version of  Lemma \ref{lem:ngd_descent}. 

\begin{lemma}\label{lem:7'}
For   $\mu \in \cM_+(\cS)$, the $\alpha$-policy mirror ascent generates a sequence of policies $\{\pi_{k}\}^\infty_{k=1}$ satisfying
\[ \sum_{a\in \cA}G^{\pi_{k}}(a, s )  (\pi_{k}(a\,|\,s)-\pi_{k+1}(a\,|\,s))  \le 0,\qquad \forall\, s\in \cS ,\]and
\[
J_\mu^{\pi_k} \le J_\mu^{\pi_{k+1}}.
\]
\end{lemma}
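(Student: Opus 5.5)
Both claims follow from the three-point descent lemma (Fact~\ref{fact:bregman}) and the performance difference lemma (Lemma~\ref{lem:pdl}), in the pattern of \citet{xiao2022convergence}.

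\textbf{Per-state inequality.} Fix $s\in\cS$ and apply Fact~\ref{fact:bregman} with $\mathcal{C}=\cM_\alpha(\cA)$, $x=\pi_k(\cdot\,|\,s)$ and $\phi(p)=-\eta_k\sum_a G^{\pi_k}(s,a)p(a)$. The function $\phi$ is linear, hence proper, closed and convex. The set $\cM_\alpha(\cA)$ is closed, convex, and meets the relative interior of the simplex because $\alpha<1/|\cA|$. The update $x^+$ is exactly $\pi_{k+1}(\cdot\,|\,s)$. Taking the comparison point $u=\pi_k(\cdot\,|\,s)\in\cM_\alpha(\cA)$ gives
\[
-\eta_k\sum_a G^{\pi_k}(s,a)\pi_{k+1}(a\,|\,s)+D(\pi_{k+1}(\cdot\,|\,s),\pi_k(\cdot\,|\,s)) \le -\eta_k\sum_a G^{\pi_k}(s,a)\pi_k(a\,|\,s) - D(\pi_k(\cdot\,|\,s),\pi_{k+1}(\cdot\,|\,s)).
\]
Both divergences are nonnegative and $\eta_k>0$, so dropping them and dividing by $\eta_k$ yields $\sum_a G^{\pi_k}(s,a)\big(\pi_k(a\,|\,s)-\pi_{k+1}(a\,|\,s)\big)\le 0$. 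This holds separately for $s\in\cR$, where $G=Q$, and for $s\in\cT$, where $G=K$.

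\textbf{Monotonicity.} Apply Lemma~\ref{lem:pdl} with $\pi=\pi_{k+1}$ and $\pi'=\pi_k$; both lie in $\Pi_\alpha\subset\Pi_+$ and $\mu$ has full support. This gives
\[
J_\mu^{\pi_{k+1}}-J_\mu^{\pi_k}=\sum_{s\in\cR}d^{\pi_{k+1}}_\mu(s)\sum_a(\pi_{k+1}-\pi_k)(a\,|\,s)\,Q^{\pi_k}(s,a)+\sum_{s\in\cT}\delta^{\pi_{k+1}}_\mu(s)\sum_a(\pi_{k+1}-\pi_k)(a\,|\,s)\,K^{\pi_k}(s,a).
\]
Each inner sum equals $\sum_a G^{\pi_k}(s,a)(\pi_{k+1}-\pi_k)(a\,|\,s)$, which is nonnegative by the first part. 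The weights $d^{\pi_{k+1}}_\mu$ and $\delta^{\pi_{k+1}}_\mu$ are nonnegative by definition, as a Ces\`aro limit and a Neumann series of nonnegative matrices. Hence $J_\mu^{\pi_{k+1}}\ge J_\mu^{\pi_k}$.

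\textbf{Main obstacle.} The delicate point is the hypotheses of Fact~\ref{fact:bregman}: it needs $h$ of Legendre type with $\rint\dom h\cap\mathcal{C}\ne\varnothing$, and it needs iterates to remain in $\cM_\alpha(\cA)$ so that Lemma~\ref{lem:pdl} applies. I would rely on the paper's remark after Fact~\ref{fact:bregman} for negative entropy. For the Euclidean case, the only fact used is the optimality inequality for projection onto a closed convex set, which gives the same three-point inequality, so the argument goes through unchanged.
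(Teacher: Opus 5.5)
Your proposal is correct and follows the paper's proof essentially step for step. The paper also applies Fact~\ref{fact:bregman} on $\cM_\alpha(\cA)$ with the linear $\phi$ and comparison point $p=\pi_k(\cdot\,|\,s)$, drops the two nonnegative Bregman terms, and obtains monotonicity by weighting the per-state inequalities with $\rho_\mu^{\pi_{k+1}}$ and invoking Lemma~\ref{lem:pdl} with $\pi=\pi_{k+1}$, $\pi'=\pi_k$. Your extra checks ($\cM_\alpha(\cA)$ meets $\rint\dom h$, the iterates stay in $\Pi_\alpha\subset\Pi_+$ by construction, and the weights $d^{\pi_{k+1}}_\mu,\delta^{\pi_{k+1}}_\mu$ are nonnegative) only make explicit what the paper leaves implicit.
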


\begin{proof}
    Applying Fact \ref{fact:bregman} with
$\mathcal{C}= \cM_
\alpha(\cA)$, $\phi(p)=-\eta_k \sum_{a\in\cA} G^{\pi_k}(a,s)p(a)$, we obtain
\begin{align*}
    &\eta_k \sum_{a\in \cA} G^{\pi_k}(a,s) p(a)
+D\big(\pi_{k+1}(\cdot\,|\,s),\, \pi_k(\cdot\,|\,s)\big)
\\&\le
\eta_k \sum_{a\in \cA} G^{\pi_k}(a,s) \pi_{k+1}(a\,|\,s)
+ D\big(p, \pi_k(\cdot\,|\,s)\big) -D\big(p, \pi_{k+1}(\cdot\,|\,s)\big)
\end{align*}
for any $p\in\cM_\alpha(\cA)$. Rearranging terms and dividing both sides by $\eta_k$, we get
\begin{align*}
    &\sum_{a\in \cA} G^{\pi_k}(a,s)( p(a)-\pi_{k+1}(a\,|\,s))
+ \frac{1}{\eta_k} D\!\big(\pi_{k+1}(\cdot\,|\,s), \pi_k(\cdot\,|\,s)\big)
\\&\le\;
\frac{1}{\eta_k} D\!\big(p, \pi_k(\cdot\,|\,s)\big)
- \frac{1}{\eta_k} D\!\big(p, \pi_{k+1}(\cdot\,|\,s)\big) \tag{$*$}.
\end{align*}
Letting \(p=\pi_k(\cdot\,|\,s)\) in previous inequality yields
\[
\sum_{a\in \cA} G^{\pi_k}(a,s)(\pi_{k}(a\,|\,s) - \pi_{k+1}(a\,|\,s)) \le
-\frac{1}{\eta_k} D\big(\pi_{k+1}(\cdot\,|\,s), \pi_k(\cdot\,|\,s)\big)
-\frac{1}{\eta_k} D\big(\pi_k(\cdot\,|\,s), \pi_{k+1}(\cdot\,|\,s)\big).
\]
Then, the first result comes from nonnegativity of Bregman divergence and second result comes from performance difference lemma by multiplying $\rho_\mu^{\pi_{k+1}}(\cdot)$ both sides.
\end{proof}


\subsection{Proof of Theorem \ref{thm:ngd}}
\begin{proof}
Consider previous inequality ($*$). Let \(p=\pi_\alpha(\cdot \,|\, s)\in\cM_\alpha\)  and add--subtract \(\pi_{k}(\cdot \,|\, s)\) inside
the inner product. Then we have
\begin{align*}
    &\sum_{a\in \cA} G^{\pi_k}(a,s)(\pi_{k}(a\,|\,s) - \pi_{k+1}(a\,|\,s))
+ \sum_{a\in \cA} G^{\pi_k}(a,s)(\pi_\alpha(a\,|\,s) - \pi_{k}(a\,|\,s)) \\&\le\; \frac{1}{\eta_k} D\big(\pi_\alpha (\cdot\,|\,s),\pi_{k}(\cdot \,|\, s)\big)
        - \frac{1}{\eta_k} D\big(\pi_\alpha (\cdot\,|\,s),\pi_{k+1} (\cdot\,|\,s)\big).
\end{align*}
This implies
\begin{align*}
    &\sum_{s \in \cS}\rho_\mu^{\pi_\alpha}(s)\sum_{a\in \cA} G^{\pi_k}(a,s)(\pi_{k}(a\,|\,s) - \pi_{k+1}(a\,|\,s))
+ \sum_{s \in \cS}\rho_\mu^{\pi_\alpha}(s)\sum_{a\in \cA} G^{\pi_k}(a,s)(\pi_\alpha(a\,|\,s) - \pi_{k}(a\,|\,s))\\ & \le\; \frac{1}{\eta_k} D_{\rho^{\pi_\alpha}_\mu}(\pi_\alpha, \pi_k) - \frac{1}{\eta_k} D_{\rho^{\pi_\alpha}_\mu}(\pi_\alpha, \pi_{k+1}) .
\end{align*}

For the first term, 
\[
\begin{aligned}
&\sum_{s \in \cS}\rho^{\pi_\alpha}_\mu(s)\sum_{a\in \cA} G^{\pi_k}(a,s)(\pi_{k}(a\,|\,s) - \pi_{k+1}(a\,|\,s))
 \\
&= \sum_{s \in \cS}\rho^{\pi_{k+1}}_\mu(s) \frac{\rho^{\pi_\alpha}_\mu(s)}{\rho^{\pi_{k+1}}_\mu(s)}\sum_{a\in \cA} G^{\pi_k}(a,s)(\pi_{k}(a\,|\,s) - \pi_{k+1}(a\,|\,s)) \\
&\ge C_\alpha (J_{\mu}^{\pi_{k}}-J_{\mu}^{\pi_{k+1}}),
\end{aligned}
\]
where the last inequality comes from Lemma~\ref{lem:7'} and~\ref{lem:pdl}.

For the second term, by  Lemma~\ref{lem:pdl},
\[
\sum_{s \in \cS}\rho_\mu^{\pi_\alpha}(s)\sum_{a\in \cA} G^{\pi_k}(a,s)(\pi_\alpha(a\,|\,s) - \pi_{k}(a\,|\,s))
= J_\mu^{\pi_\alpha}-J_\mu^{\pi_{k}}  .
\]
Thus we have
\[
J_\mu^{\pi_\alpha}-J_\mu^{\pi_{k}} 
\;\le\;\frac{1}{\eta_k} D_{\rho_\mu^{\pi_\alpha}}(\pi_\alpha, \pi_k) - \frac{1}{\eta_k} D_{\rho_\mu^{\pi_\alpha}}(\pi_\alpha, \pi_{k+1})
+ C_\alpha(J_{\mu}^{\pi_{k+1}}
 - J_{\mu}^{\pi_{k}}).
\]

Setting \(\eta_k=\eta\) for all \(k\ge 0\) and summing over \(k\) gives
\[
\sum_{i=0}^{k}\big(J_\mu^{\pi_\alpha}- J_\mu^{\pi_{i}}  \big)
\;\le\;
\frac{1}{\eta} D_{\rho_\mu^{\pi_\alpha}}(\pi_\alpha, \pi_0)  - \frac{1}{\eta} D_{\rho_\mu^{\pi_\alpha}}(\pi_\alpha, \pi_{k+1}) 
+  C_\alpha(J_{\mu}^{\pi_{k+1}}
 - J_{\mu}^{\pi_{0}}).
\]

Since \(J_\mu^{\pi_{k}}\) are non-decreasing by Lemma \ref{lem:ngd_descent} and Bregman divergence is non-negative, we conclude that
\[
J_\mu^{\pi_\alpha}-J_\mu^{\pi_{k}}  
\;\le\; \frac{1}{k+1}\!\left( \frac{D_{\rho_\mu^{\pi_\alpha}}(\pi_\alpha, \pi_0)}{\eta}
+ C_
\alpha (J^{\pi_\alpha}_{\mu}- J_{\mu}^{\pi_{0}}) \right).
\]
\end{proof}


\subsection{Proof of Theorem \ref{thm:ngd_linear}}
Define
$
U^{\pi_\alpha}_k=J_\mu^{\pi_\alpha}- J_\mu^{\pi_{k}},
$
We first prove the following key lemma. 
\begin{lemma}\label{lem:ngd_linear}
For $\pi_0 \in \Pi_\alpha$ and a given $\mu \in \cM_+(\cS)$, the $\alpha$-clipped policy mirror ascent with step size $\eta_k>0$ generates a sequence of policies $\{\pi_{k}\}^\infty_{k=1}$ satisfying,
\[
C_\alpha\,\big(U^{\pi_\alpha}_{k+1}-U^{\pi_\alpha}_k\big) + U^{\pi_\alpha}_k
\;\le\;
\frac{1}{\eta_k} D_{\rho_\mu^{\pi_\alpha}}({\pi_\alpha}, \pi_k) - \frac{1}{\eta_k} D_{\rho_\mu^{\pi_\alpha}}({\pi_\alpha}, \pi_{k+1}) 
\]
\end{lemma}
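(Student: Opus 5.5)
This inequality is essentially the per-step inequality already derived inside the proof of Theorem~\ref{thm:ngd}, rewritten in terms of $U^{\pi_\alpha}_k$. The only change is that the step size $\eta_k$ is kept general instead of being set to a constant. I would start from the three-point inequality $(*)$ obtained in the proof of Lemma~\ref{lem:7'} via Fact~\ref{fact:bregman}, applied with $\mathcal{C}=\cM_\alpha(\cA)$ and $p=\pi_\alpha(\cdot\,|\,s)$. This choice of $p$ is admissible because $\pi_\alpha\in\Pi_\alpha$.

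Inside the inner product I would add and subtract $\pi_k(\cdot\,|\,s)$. This splits the left side into
\[
\sum_a G^{\pi_k}(s,a)\bigl(\pi_k(a\,|\,s)-\pi_{k+1}(a\,|\,s)\bigr)+\sum_a G^{\pi_k}(s,a)\bigl(\pi_\alpha(a\,|\,s)-\pi_k(a\,|\,s)\bigr).
\]
I drop the nonnegative term $D(\pi_{k+1},\pi_k)/\eta_k$. Then I multiply by $\rho_\mu^{\pi_\alpha}(s)\ge 0$ and sum over $s$. The right side becomes exactly $\frac{1}{\eta_k}\bigl(D_{\rho_\mu^{\pi_\alpha}}(\pi_\alpha,\pi_k)-D_{\rho_\mu^{\pi_\alpha}}(\pi_\alpha,\pi_{k+1})\bigr)$.

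For the second sum, Lemma~\ref{lem:pdl} with $(\pi,\pi')=(\pi_\alpha,\pi_k)$ applies. Since $\rho_\mu^{\pi_\alpha}$ equals $d^{\pi_\alpha}_\mu$ on $\cR$ and $\delta^{\pi_\alpha}_\mu$ on $\cT$, and $G^{\pi_k}$ equals $Q^{\pi_k}$ on $\cR$ and $K^{\pi_k}$ on $\cT$, the lemma gives exactly $J^{\pi_\alpha}_\mu-J^{\pi_k}_\mu=U^{\pi_\alpha}_k$.

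The first sum is the main obstacle, because it requires a change of measure. By Lemma~\ref{lem:7'}, each per-state term $g(s):=\sum_a G^{\pi_k}(s,a)(\pi_k-\pi_{k+1})(a\,|\,s)$ is $\le 0$. I would write
\[
\rho_\mu^{\pi_\alpha}(s)g(s)=\frac{\rho_\mu^{\pi_\alpha}(s)}{\rho_\mu^{\pi_{k+1}}(s)}\,\rho_\mu^{\pi_{k+1}}(s)g(s).
\]
Both policies lie in $\Pi_\alpha$, so the ratio is at most $C_\alpha$ by Lemma~\ref{lem:bound}. Because $g(s)\le 0$, this gives $\rho_\mu^{\pi_\alpha}(s)g(s)\ge C_\alpha\rho_\mu^{\pi_{k+1}}(s)g(s)$. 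Summing over $s$ and applying Lemma~\ref{lem:pdl} with $(\pi,\pi')=(\pi_{k+1},\pi_k)$ yields $\sum_s\rho_\mu^{\pi_{k+1}}(s)g(s)=J^{\pi_k}_\mu-J^{\pi_{k+1}}_\mu=U^{\pi_\alpha}_{k+1}-U^{\pi_\alpha}_k$.

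Combining the three pieces gives
\[
C_\alpha\bigl(U^{\pi_\alpha}_{k+1}-U^{\pi_\alpha}_k\bigr)+U^{\pi_\alpha}_k\le\frac{1}{\eta_k}\bigl(D_{\rho_\mu^{\pi_\alpha}}(\pi_\alpha,\pi_k)-D_{\rho_\mu^{\pi_\alpha}}(\pi_\alpha,\pi_{k+1})\bigr),
\]
which is the claim. Along the way I need to check that all iterates stay in $\Pi_\alpha\subset\Pi_+$, which follows from Fact~\ref{fact:bregman}. This is what makes Lemma~\ref{lem:pdl} and the constant $C_\alpha$ applicable, and it ensures $\rho_\mu^{\pi_{k+1}}(s)>0$ for every $s$ because $\mu$ has full support.
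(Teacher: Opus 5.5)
Your proposal is correct and follows the paper's proof essentially step for step. You use the three-point inequality $(*)$ with $p=\pi_\alpha(\cdot\,|\,s)$, add and subtract $\pi_k$, apply the performance difference lemma to the $\rho_\mu^{\pi_\alpha}$-weighted term, and handle the remaining term by a change of measure through $C_\alpha$ combined with the sign condition from Lemma~\ref{lem:7'}, which is exactly what the paper does (and you spell out more explicitly than the paper why the nonpositivity of each per-state term is what allows the ratio $\rho_\mu^{\pi_\alpha}(s)/\rho_\mu^{\pi_{k+1}}(s)$ to be replaced by its upper bound $C_\alpha$).
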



\begin{proof}
In the previous proof of Theorem \ref{thm:ngd}, we showed that 
\begin{align*}
    &\sum_{s \in \cS}\rho_\mu^{\pi_\alpha}(s)\sum_{a\in \cA} G^{\pi_k}(a,s)(\pi_{k}(a\,|\,s) - \pi_{k+1}(a\,|\,s))
+ \sum_{s \in \cS}\rho_\mu^{\pi_\alpha}(s)\sum_{a\in \cA} G^{\pi_k}(a,s)(\pi_\alpha(a\,|\,s) - \pi_{k}(a\,|\,s))\\ & \le\; \frac{1}{\eta_k} D_{\rho_\mu^{\pi_\alpha}}(\pi_\alpha, \pi_k) - \frac{1}{\eta_k} D_{\rho_\mu^{\pi_\alpha}}(\pi_\alpha, \pi_{k+1}) .
\end{align*}
For the first term
\[
\begin{aligned}
&\sum_{s \in \cS}\rho_\mu^{\pi_\alpha}(s)\sum_{a\in \cA} G^{\pi_k}(a,s)(\pi_{k}(a\,|\,s) - \pi_{k+1}(a\,|\,s))\\
&\ge  C_\alpha (J_{\mu}^{\pi_{k}}-J_{\mu}^{\pi_{k+1}})
\end{aligned}
\]
where the inequality comes from Lemma~\ref{lem:7'} and~\ref{lem:pdl}.

For the second term, by Lemma~\ref{lem:pdl},
\[
\sum_{s \in \cS}\rho_\mu^{\pi_\alpha}(s)\sum_{a\in \cA} G^{\pi_k}(a,s)(\pi_\alpha(a\,|\,s) - \pi_{k}(a\,|\,s))
= J_\mu^{\pi_\alpha}- J_\mu^{\pi_{k}}  .
\]
We obtain desired result after substitution.

\end{proof}

We are now ready to prove Theorem \ref{thm:ngd_linear}
\begin{proof}
By Lemma \ref{lem:ngd_linear}, 
\[
C_\alpha\,\big(U^{\pi_\alpha}_{k+1}-U^{\pi_\alpha}_k\big) + U^{\pi_\alpha}_k
\;\le\;
\frac{1}{\eta_k} D_{\rho_\mu^{\pi_\alpha}}(\pi_\alpha, \pi_k) - \frac{1}{\eta_k} D_{\rho_\mu^{\pi_\alpha}}(\pi_\alpha, \pi_{k+1}). 
\]
Dividing both sides by $C_\alpha$ and rearranging terms, we obtain
\[
U^{\pi_\alpha}_{k+1}
+\frac{1}{\eta_k C_\alpha} D_{\rho_\mu^{\pi_\alpha}}(\pi_\alpha, \pi_{k+1}) 
\;\le\;
\left(1-\frac{1}{C_\alpha}\right)
\left(
U^{\pi_\alpha}_k + \frac{1}{\eta_k(C_\alpha-1)} D_{\rho_\mu^{\pi_\alpha}}({\pi_\alpha}, \pi_{k}) 
\right).
\]
Since the step sizes satisfy condition,
$\eta_{k+1}(C_\alpha-1)\ge \eta_kC_\alpha >0$, we have
\[
U^{\pi_\alpha}_{k+1}
+\frac{1}{\eta_{k+1}(C_\alpha-1)} D_{\rho_\mu^{\pi_\alpha}}({\pi_\alpha}, \pi_{k+1})
\;\le\;
\left(1-\frac{1}{C_\alpha}\right)
\left(
U^{\pi_\alpha}_k + \frac{1}{\eta_k(C_\alpha-1)} D_{\rho_\mu^{\pi_\alpha}}({\pi_\alpha}, \pi_{k})
\right).
\]
Therefore, by recursion,
\[
U^{\pi_\alpha}_k
+\frac{1}{\eta_k(C_\alpha-1)} D_{\rho_\mu^{\pi_\alpha}}({\pi_\alpha}, \pi_{k})
\;\le\;
\left(1-\frac{1}{C_\alpha}\right)^{\!k}
\left(
U^{\pi_\alpha}_0 + \frac{1}{\eta_0(C_\alpha-1)} D_{\rho_\mu^{\pi_\alpha}}({\pi_\alpha}, \pi_{0})
\right).
\]
\end{proof}

\section{Omitted proofs in Section~\ref{sec:4}}
\subsection{Proof of Theorem~\ref{thm:PE}}
Define  
\[\qquad V^{k+1} = P^\pi V^k+r^\pi \qquad k=0,1,\dots\]
where $V^0 =0$.
\begin{fact}[Classical result, \protect{\cite[Theorem~9.4.1]{10.5555/528623}}]\label{fact::vi_normalized_iterate}
Consider a general (multichain) MDP. Then, for $k \ge 1$, the sequence $\{V^k\}^\infty_{k=0}$ exhibit the rate
\[\infn{\frac{V^k}{k}-J^{\pi}} \le \frac{2\infn{V^{\pi}}}{k}.\]
\end{fact}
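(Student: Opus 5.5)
The proof reduces the value-iteration error to the Poisson equation. Let $V^\pi$ be the relative value and $J^\pi$ the gain, which satisfy the Bellman equations
\[
P^\pi J^\pi=J^\pi, \qquad r^\pi+P^\pi V^\pi=J^\pi+V^\pi .
\]
The goal is to show by induction that the error vector
\[
E^k:=V^k-kJ^\pi-V^\pi
\]
satisfies $E^k=-(P^\pi)^kV^\pi$.

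\textbf{Base case.} For $k=0$ we have $V^0=0$, so $E^0=-V^\pi$, as claimed.

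\textbf{Inductive step.} Start from $V^{k+1}=P^\pi V^k+r^\pi$. Substitute $r^\pi=J^\pi+V^\pi-P^\pi V^\pi$, and use $P^\pi(kJ^\pi)=kJ^\pi$:
\[
V^{k+1}-(k+1)J^\pi-V^\pi=P^\pi\bigl(V^k-kJ^\pi-V^\pi\bigr).
\]
Hence $E^{k+1}=P^\pi E^k$, and therefore $E^k=-(P^\pi)^kV^\pi$ for all $k\ge 0$.

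\textbf{Conclusion.} Rearranging the definition of $E^k$ gives
\[
\frac{V^k}{k}-J^\pi=\frac{V^\pi-(P^\pi)^kV^\pi}{k}.
\]
Since $(P^\pi)^k$ is stochastic, $\infn{(P^\pi)^kV^\pi}\le\infn{V^\pi}$. The triangle inequality then gives the bound $2\infn{V^\pi}/k$ for all $k\ge 1$.

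The only delicate point is that the Bellman equations hold in the multichain setting with $V^\pi$ defined through the Ces\`aro-averaged deviation matrix. This is exactly the cited \cite[Theorem~8.2.6]{10.5555/528623}, stated in the preliminaries. No aperiodicity is needed, because the argument uses only the stochasticity of $P^\pi$ and never takes a limit of $(P^\pi)^k$. I therefore expect no real obstacle beyond carrying out the one-line induction carefully.
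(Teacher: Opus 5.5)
Your proof is correct: the induction $E^{k+1}=P^\pi E^k$ gives the identity $V^k = kJ^\pi + V^\pi - (P^\pi)^k V^\pi$, and the stochasticity of $(P^\pi)^k$ then yields the bound $2\infn{V^\pi}/k$ with no aperiodicity assumption. The paper just cites \cite[Theorem~9.4.1]{10.5555/528623} for this fact, but in the proof of Lemma~\ref{lem:tar} it derives the same identity by telescoping $\sum_{i=0}^{k-1}(P^\pi)^i r^\pi$ with $r^\pi = J^\pi + (I-P^\pi)V^\pi$ rather than by induction, so your route is essentially the one the paper relies on.
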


Using Fact~\ref{fact::vi_normalized_iterate}, we first study sample complexity for approximation of $K^\pi$.
\begin{lemma}\label{lem:approxk} 
    Let $\epsilon>0$ and $ \delta>0$. For given $\pi \in \Pi$, with $1-\delta$ probability, $\hat{K}^\pi$ in critic satisfies $\infn{\hat{K}^\pi - K^\pi} \le \epsilon$ with sample complexity 
    \[NH|\cS||\cA|=\mathcal{O}\left( \frac{ (\infn{V^\pi}+R)R^2|\cS||\cA|}{ \epsilon^3}\log\left(\frac{2|\cS||\cA|}{\delta}\right)\right).\]
\end{lemma}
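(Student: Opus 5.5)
We split the error into bias and variance: $\hat{K}^\pi(s,a)$ is an average of $N$ independent copies of $K^j(s,a)=\frac{1}{H+1}\sum_{i=0}^H r(s_i,a_i)$ along a trajectory from $(s_0,a_0)=(s,a)$. The plan is to choose $H$ so that the bias is at most $\epsilon/2$, then $N$ via Hoeffding so that each estimate is within $\epsilon/2$ of its mean, and finally take a union bound over the $|\cS||\cA|$ pairs.

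\emph{Bias.} Conditioning on the first transition gives
\[
\expec[K^j(s,a)]=\frac{1}{H+1}\Big(r(s,a)+\sum_{s'}P(s'\,|\,s,a)\,V^{H}(s')\Big),
\]
where $V^H=\sum_{i=0}^{H-1}(P^\pi)^i r^\pi$ is the value-iteration iterate from $V^0=0$ defined above. Fact~\ref{fact::vi_normalized_iterate} gives $\infn{V^H-HJ^\pi}\le 2\infn{V^\pi}$. Since $K^\pi=PJ^\pi$, we get
\[
\Big|\expec[K^j(s,a)]-K^\pi(s,a)\Big|
\le \frac{|r(s,a)|+\big|\big(P(V^H-HJ^\pi)\big)(s,a)\big|+|J^\pi|_\infty}{H+1}
\le \frac{2R+2\infn{V^\pi}}{H+1},
\]
using $|J^\pi|\le R$ and the fact that $H\,PJ^\pi/(H+1)$ differs from $PJ^\pi$ by at most $R/(H+1)$. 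It therefore suffices to take $H=\mathcal{O}\big((\infn{V^\pi}+R)/\epsilon\big)$.

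\emph{Concentration.} Each $K^j(s,a)\in[-R,R]$, and the copies are independent across $j$. Hoeffding's inequality with range $2R$ gives
\[
\mathbb{P}\Big(\big|\hat{K}^\pi(s,a)-\expec[K^j(s,a)]\big|\ge \epsilon/2\Big)\le 2\exp\!\Big(-\frac{N\epsilon^2}{8R^2}\Big).
\]
A union bound over the $|\cS||\cA|$ pairs shows that $N=\mathcal{O}\big(\frac{R^2}{\epsilon^2}\log\frac{2|\cS||\cA|}{\delta}\big)$ suffices for the estimate to hold at every pair simultaneously with probability $1-\delta$.

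Combining the two choices gives total sample count $NH|\cS||\cA|=\mathcal{O}\big(\frac{(\infn{V^\pi}+R)R^2|\cS||\cA|}{\epsilon^3}\log\frac{2|\cS||\cA|}{\delta}\big)$. The main subtlety is the bias step, specifically applying Fact~\ref{fact::vi_normalized_iterate} to a trajectory whose first action is fixed to $a$ rather than drawn from $\pi$. This is handled by peeling off the first step as above. The multichain structure causes no extra difficulty, because the Fact holds for general MDPs and $K^\pi=PJ^\pi$ holds for every state, transient or recurrent.
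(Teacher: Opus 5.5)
Your proposal is correct and follows essentially the same route as the paper: you peel off the first transition to write the mean as $\frac{1}{H+1}(r+PV^H)(s,a)$, bound the bias by $2(\infn{V^\pi}+R)/(H+1)$ via Fact~\ref{fact::vi_normalized_iterate}, then apply Hoeffding with a union bound over $\cS\times\cA$. The only cosmetic difference is that you split the bias into the three pieces $r$, $P(V^H-HJ^\pi)$, $PJ^\pi$, whereas the paper writes it as $\frac{H}{H+1}P(V^H/H-J^\pi)+\frac{r-K^\pi}{H+1}$; both give the same bound.
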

\begin{proof}
Since
\begin{align*}
\hat{K}(s_0,a_0)=\frac{1}{N}\sum^N_{j=1} \frac{1}{H+1}\sum^{H}_{i=0}r_j(s_i,a_i),  
\end{align*}
and  
\begin{align*}
   \expec_\pi \left[\frac{1}{N}\sum^N_{j=1} \frac{1}{H+1}\sum^{H}_{i=0}r_j(s_i,a_i)\right] =\expec_\pi \left[\frac{1}{H+1}\sum^{H}_{i=0}r_j(s_i,a_i)\right]
\end{align*}
by independence of samples from generative model. Also, we have
\begin{align*}
    \left|\expec_\pi \left[\frac{1}{H+1}\sum^{H}_{i=0}r_j(s_i,a_i)\right]-K^\pi(s_0,a_0) \right|&= \left|\frac{1}{H+1}(PV^H+r)(s_0,a_0) -P J^\pi(s_0,a_0) \right| 
    \\&=\left|P\left(\frac{V^{H}}{H+1}-J^\pi\right)(s_0,a_0) +\frac{r(s_0,a_0)}{H+1}\right|\\&=
    \left|\frac{H}{H+1}P\left(\frac{V^{H}}{H}-J^\pi\right)(s_0,a_0) +\frac{(r-K)(s_0,a_0)}{H+1}\right|
    \\&
    \le \frac{2(\infn{V^\pi}+R)}{H+1}    
\end{align*}
where first equality comes from Bellman equation and last inequality comes from by Fact \ref{fact::vi_normalized_iterate} and $\infn{K} \le R$. Moreover, by Hoeffding inequality with $N= \frac{2R^2}{ (\epsilon')^2}\log(\frac{2}{\delta})$, we get
\[\text{Prob} \left(\left|\hat{K}^\pi(s_0,a_0) - \expec_\pi \left[\frac{1}{N}\sum^N_{j=1} \frac{1}{H+1}\sum^{H}_{i=0}r_j(s_i,a_i)\right]\right|  \le \epsilon' \right) \ge 1-\delta.\]
and by union bound over $(s_0,a_0) \in \cS \times \cA$, we have
$\infn{\hat{K}^\pi-\expec \left[ \hat{K}^\pi \right]}< \epsilon'$.
Let $H =\frac{4(\infn{V^\pi}+R)}{ \epsilon}$ and $\epsilon'=\epsilon/2$. Then, by triangular inequality $\infn{K^\pi-\hat{K}^\pi} \le \infn{K^\pi-\expec \left[\hat{K}^\pi\right]} +\infn{\expec \left[\hat{K}^\pi\right]-\hat{K}^\pi} \le \epsilon$ with sample complexity of  
\[NH |\cS||\cA|= \frac{32|\cS||\cA| (\infn{V^\pi}+R)R^2}{ \epsilon^3}\log\left(\frac{2|\cS||\cA|}{\delta}\right)\]
\end{proof}
Now, we establish following lemma for evaluation of $Q^\pi$ with critic method.
\begin{lemma}\label{lem:tar} Consider a general (multichain) MDP. Then, for any recurrent state $s \in \mathcal{R}$ for $k \ge 1$, $\{V^k\}^\infty_{k=0}$ exhibit the rate
\[\left|\frac{1}{k}\sum^k_{i=1} (V^i-i J^\pi)(s) -V^\pi(s) \right| \le \frac{2 t_{\mathrm{tar}} \infn{V^\pi}}{k}.\]
\end{lemma}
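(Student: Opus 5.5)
The plan is to write the averaged iterate explicitly in terms of the Cesàro deviation $\frac1k\sum_{i=1}^k (P^\pi)^i - P^\pi_\star$ acting on $V^\pi$. Then I restrict to the recurrent class containing $s$ and invoke the target time lemma (Fact~\ref{fact:hittting}).

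\emph{Step 1: closed form for $V^i$.} Unrolling $V^{k+1}=P^\pi V^k + r^\pi$ from $V^0=0$ gives $V^i=\sum_{j=0}^{i-1}(P^\pi)^j r^\pi$. The Bellman equations give $r^\pi = J^\pi + V^\pi - P^\pi V^\pi$ and $P^\pi J^\pi = J^\pi$. Substituting these, the sum telescopes to
\[
V^i = iJ^\pi + V^\pi - (P^\pi)^i V^\pi ,
\]
so $V^i - iJ^\pi = V^\pi - (P^\pi)^i V^\pi$. Averaging over $i=1,\dots,k$ yields
\[
\frac1k\sum_{i=1}^k (V^i - iJ^\pi) - V^\pi = -\frac1k\sum_{i=1}^k (P^\pi)^i V^\pi .
\]

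\emph{Step 2: insert $P^\pi_\star$.} From $V^\pi=(I-P^\pi+P^\pi_\star)^{-1}(I-P^\pi_\star)r^\pi$ we get $P^\pi_\star V^\pi = 0$. This uses $P^\pi_\star(I-P^\pi+P^\pi_\star)=P^\pi_\star$, hence $P^\pi_\star(I-P^\pi+P^\pi_\star)^{-1}=P^\pi_\star$, together with $P^\pi_\star(I-P^\pi_\star)=0$. The right-hand side therefore equals $-\big(\frac1k\sum_{i=1}^k (P^\pi)^i - P^\pi_\star\big)V^\pi$.

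\emph{Step 3: restrict to the class of $s$.} Take $s\in\cR_j$. The $s$-th rows of $(P^\pi)^i$ and of $P^\pi_\star$ are supported on $\cR_j$, and there they equal the rows of $(R^\pi_j)^i$ and of $R^\pi_{j,\star}=\mathbf 1 (g^\pi_j)^\top$ (canonical form, Fact~\ref{fact:classification}). Hence
\[
\Big|\frac1k\sum_{i=1}^k (V^i - iJ^\pi)(s) - V^\pi(s)\Big| \le \Big\|\frac1k\sum_{i=1}^k (R^\pi_j)^i(s,\cdot) - g^\pi_j\Big\|_1 \infn{V^\pi}.
\]
Fact~\ref{fact:hittting} bounds the $\ell_1$ factor by $2t^{\pi,j}_{\mathrm{tar}}/k \le 2t_{\mathrm{tar}}/k$, which gives the claim.

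\emph{Main obstacle.} The difficulty is the orientation in Fact~\ref{fact:hittting}, which is written with the column index $(\cdot,s)$, whereas Step 3 needs the row $(s,\cdot)$, i.e.\ the Cesàro-averaged law of the chain started at $s$. My first attempt is to read the cited Roberts--Rosenthal bound as the row statement $\|\frac1k\sum_{j\le k}\mathrm{Prob}(s_j=\cdot\mid s_0=s)-g\|_1\le 2t_{\mathrm{tar}}/k$, which is its usual form. The underlying argument supports this: run the chain until it hits a target state drawn from $g$, which takes expected time $t_{\mathrm{tar}}$ independent of the start. After that the shifted chain is stationary, so the averaged $\ell_1$ discrepancy is at most $2\,\expec[\text{hitting time}]/k$. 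If a column version were genuinely needed, I would instead pass to the time-reversed chain, which has the same stationary distribution $g^\pi_j$ and, by the random target lemma, the same target time.
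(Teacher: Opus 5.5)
Your proposal is correct and follows essentially the same route as the paper: telescope $V^i = iJ^\pi + V^\pi - (P^\pi)^i V^\pi$ via the Bellman equations, remove the limit term with $P^\pi_\star V^\pi = 0$, restrict to the recurrent class of $s$, and invoke Fact~\ref{fact:hittting}. Your row reading of that fact is the right one (the column version as literally typed cannot hold in general, since the Ces\`aro average of $(R^\pi_j)^i(\cdot,s)$ tends to $g^\pi_j(s)\mathbf{1}$ rather than $g^\pi_j$), so the time-reversal fallback is unnecessary; also, your summation range $i=1,\dots,k$ matches the fact more cleanly than the paper's $\sum_{i=0}^{k-1}$.
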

\begin{proof} First, 
    \begin{align*}
        V^k &= \sum^{k-1}_{i=0}(\cP^\pi)^ir^\pi
        \\&= \sum^{k-1}_{i=0}(\cP^\pi)^i(J^\pi+(I-P^\pi)V^\pi)
        \\&= k J^\pi+V^\pi-(\cP^\pi)^{k} V^\pi.
    \end{align*}
 Thus 
    \begin{align*}
        \frac{1}{k}\sum^k_{i=1} (V^i-iJ^\pi)&=V^\pi-\frac{1}{k}\sum^{k-1}_{i=0}(\cP^\pi)^i V^\pi\\&=V^\pi+\left(P^\pi_\star-\frac{1}{k}\sum^{k-1}_{i=0}(\cP^\pi)^i\right)V^\pi
    \end{align*}
    where first equality is from $V^0=0$ and last equality comes from $P^\pi_\star V^\pi=0$ \cite[Section A.5]{10.5555/528623}. Then by canontical form of stochastic matrix and Fact \ref{fact:hittting}, we have for $s\in \cR$,  
    \[\left\|P^\pi_\star(s \,|\, \cdot) -\frac{1}{k}\sum^{k-1}_{i=0}(\cP^\pi)^i (s \,|\, \cdot )\right\|_1 \le    \frac{2t^\pi_{\mathrm{tar}} }{k} \qquad \forall s \in \mathcal{R} \]
    where $t^\pi_{\mathrm{tar}} = \max_{1\le j \le m} t^{\pi,j}_{\mathrm{tar}}$.
\end{proof}
Using previous lemma, we establish following sample complexity result. 
\begin{lemma}\label{lem:approxq} Let $\epsilon>0$ and $ \delta>0$. For given $\pi \in \Pi_+$, with $1-\delta$ probability 
\[\infn{\hat{Q}^{\pi}(\cdot, \cdot\cdot)\mathbf{1}_{\cR}(\cdot) - Q^\pi(\cdot, \cdot\cdot) \mathbf{1}_{\cR}(\cdot)} \le \frac{(2t_{\mathrm{tar}}+1) \infn{V^\pi}}{H'+1}+\frac{(H'+2)\infn{K^\pi-\hat{K}^\pi}}{2}+\epsilon. \] 
with sample complexity 
    \[N'H'|\cS||\cA|= \mathcal{O}\left( \frac{2R^2H'(H'+2)^2|\cS||\cA|}{ \epsilon^2}\log(\frac{2|\cS||\cA|}{\delta})\right).\]
\end{lemma}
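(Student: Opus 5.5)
We split the error for a recurrent pair $(s_0,a_0)$ with $s_0\in\cR$ into three parts: a truncation bias computed with the true $K^\pi$, a plug-in error from replacing $K^\pi$ by $\hat K^\pi$, and a Hoeffding fluctuation term. Throughout we condition on $\hat K^\pi$, which is built from samples independent of those used for $\hat Q^\pi$, so $\hat K^\pi$ may be treated as a fixed vector.

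\emph{Truncation bias.} Define the idealized quantity
\[
\tilde Q(s_0,a_0)=\expec_\pi\Big[\tfrac{1}{H'+1}\textstyle\sum_{h=0}^{H'}\sum_{i=0}^h\big(r(s_i,a_i)-K^\pi(s_i,a_i)\big)\,\Big|\,s_0,a_0\Big].
\]
We write the inner expectation in matrix form. The $i=0$ term is $r-K^\pi$. For $i\ge1$, conditioning on $s_1\sim P(\cdot\mid s_0,a_0)$ gives $P(P^\pi)^{i-1}(r^\pi-P^\pi J^\pi)$. Since $P^\pi J^\pi=J^\pi$, this equals $P(P^\pi)^{i-1}(r^\pi-J^\pi)$. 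Summing over $i$, the inner sum becomes
\[
(r-K^\pi)+P\big(V^h-hJ^\pi\big),
\]
with $V^h$ the value-iteration iterates defined above. Averaging over $h=0,\dots,H'$ yields
\[
\tilde Q=(r-K^\pi)+\tfrac{1}{H'+1}\textstyle\sum_{h=1}^{H'}P(V^h-hJ^\pi).
\]
From the identity $V^h=hJ^\pi+V^\pi-(P^\pi)^hV^\pi$ in the proof of Lemma~\ref{lem:tar}, we have
\[
\tfrac{1}{H'}\textstyle\sum_{h=1}^{H'}(V^h-hJ^\pi)=V^\pi+\Big(P^\pi_\star-\tfrac{1}{H'}\textstyle\sum(P^\pi)^h\Big)V^\pi.
\]
Because $s_0\in\cR_i$ is closed, $P(\cdot\mid s_0,a_0)$ is supported on $\cR_i$. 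The target time bound (Fact~\ref{fact:hittting}, as used in Lemma~\ref{lem:tar}) then controls the bracketed term under $P(\cdot\mid s_0,a_0)$ by $2t_{\mathrm{tar}}\infn{V^\pi}/H'$. Comparing with $Q^\pi=r+PV^\pi-K^\pi$, the only remaining discrepancy is the prefactor $H'/(H'+1)$ multiplying $PV^\pi$, which contributes $\infn{V^\pi}/(H'+1)$. Altogether,
\[
|\tilde Q-Q^\pi|(s_0,a_0)\le\frac{(2t_{\mathrm{tar}}+1)\infn{V^\pi}}{H'+1}.
\]
The main care needed is this index bookkeeping: the $i=0$ term, the Cesàro averaging, and the shift by $P$.

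\emph{Plug-in error.} Replacing $K^\pi$ by $\hat K^\pi$ changes each summand by at most $\infn{K^\pi-\hat K^\pi}$. The inner sum has $h+1$ terms, so averaging over $h$ gives
\[
\tfrac{1}{H'+1}\textstyle\sum_{h=0}^{H'}(h+1)=\tfrac{H'+2}{2}
\]
times $\infn{K^\pi-\hat K^\pi}$, which is exactly the middle term of the bound.

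\emph{Concentration.} Each sample $\hat Q^j(s_0,a_0)$ is bounded in absolute value by $O(R(H'+2))$, since $|r-\hat K|\le 2R$ when rewards and the estimates $\hat K^\pi$ lie in $[-R,R]$. By Hoeffding, $N'=O\big(R^2(H'+2)^2\epsilon^{-2}\log(2|\cS||\cA|/\delta)\big)$ samples suffice for deviation at most $\epsilon$ from the conditional mean. A union bound over $\cS\times\cA$ makes this hold simultaneously for all pairs. Each sample uses a trajectory of length $H'+1$ per pair, so the total sample count is $N'H'|\cS||\cA|$, matching the stated complexity. The triangle inequality combining the three pieces finishes the proof.
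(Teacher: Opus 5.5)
Your proposal is correct and follows the paper's decomposition: the truncation bias via $V^h = hJ^\pi + V^\pi - (P^\pi)^h V^\pi$ and Fact~\ref{fact:hittting}, the $(H'+2)/2$ plug-in factor, and Hoeffding with a union bound over $\cS\times\cA$. The only difference is cosmetic: the paper applies Hoeffding to the idealized samples built with the true $K^\pi$ and bounds the plug-in error pathwise, so it does not need your conditioning on $\hat K^\pi$, the independence of the two sample batches, or the bound $|\hat K^\pi|\le R$.
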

\begin{proof}
    $\hat{Q}^{j}(s_0,a_0)=\frac{1}{H'+1}\sum_{h=0}^{H'} \sum^h_{i=0}\big(r_j(s_i,a_i)-\hat{K}^{\pi}(s_i,a_i)\big)$.
First, we can decompose as follows
\begin{align*}
&\left| \frac{1}{N'}\sum^{N'}_{j=1}\hat{Q}^{j}(s_0,a_0)-Q^{\pi}(s_0,a_0)\right|\\&\le      \frac{1}{N'}\sum^{N'}_{j=1}\left|\frac{1}{H'+1}\sum_{h=0}^{H'} \sum^h_{i=0}\big(r_j(s_i,a_i)-\hat{K}^{\pi}(s_i,a_i)\big) - \frac{1}{H'+1}\sum_{h=0}^{H'} \sum^h_{i=0}\big(r_j(s_i,a_i)-K^{\pi}(s_i,a_i)\big)\right|\\&+ \left| \frac{1}{N'}\sum^{N'}_{j=1}\frac{1}{H'+1}\sum_{h=0}^{H'} \sum^h_{i=0}\big(r_j(s_i,a_i)-K^{\pi}(s_i,a_i)\big)- \expec_\pi \left[\frac{1}{H'+1}\sum_{h=0}^{H'} \sum^h_{i=0}\big(r_j(s_i,a_i)-K^{\pi}(s_i,a_i)\big)\right]\right|\\& + \frac{1}{N'}\sum^{N'}_{j=1}\left|\expec_\pi \left[\frac{1}{H'+1}\sum_{h=0}^{H'} \sum^h_{i=0}\big(r_j(s_i,a_i)-K^{\pi}(s_i,a_i)\big)\right]-Q^\pi(s_0,a_0)\right|
\end{align*}

For the first term, 
 \begin{align*}
    & \left|\frac{1}{H'+1}\sum_{h=0}^{H'} \sum^h_{i=0}\big(r_j(s_i,a_i)-\hat{K}^{\pi}(s_i,a_i)\big) - \frac{1}{H'+1}\sum_{h=0}^{H'} \sum^h_{i=0}\big(r_j(s_i,a_i)-K^{\pi}(s_i,a_i)\big)\right|
    \\ &= \left|-\frac{1}{H'+1}\sum_{h=0}^{H'} \sum^h_{i=0}\hat{K}^{\pi}(s_i,a_i) + \frac{1}{H'+1}\sum_{h=0}^{H'} \sum^h_{i=0} K^{\pi}(s_i,a_i)\right|\\&\le \frac{(H'+2)\infn{K^\pi-\hat{K}^\pi}}{2}
 \end{align*}

 For the last term, 
\begin{align*}
    &\left|Q^\pi(s_0,a_0) - \expec_\pi \left[\frac{1}{H'+1}\sum_{h=0}^{H'} \sum^h_{i=0}\big(r_j(s_i,a_i)-K^{\pi}(s_i,a_i)\big)\right]\right|\\&=
     \left|(P V^\pi+r-K^\pi)(s_0,a_0)- \expec_\pi \left[\frac{1}{H'+1}\sum_{h=0}^{H'} \sum^h_{i=0}\big(r_j(s_i,a_i)-K^{\pi}(s_i,a_i)\big)\right]\right|\\&=
     \left|P V^\pi(s_0,a_0)- \expec_\pi \left[\frac{1}{H'+1}\sum_{h=1}^{H'} \sum^h_{i=1}\big(r_j(s_i,a_i)-K^{\pi}(s_i,a_i)\big)\right]\right|\\&=
     \left|P V^\pi(s_0,a_0) -  P\left(\frac{1}{H'+1}\sum^{H'}_{i=1} (V^i-i J^\pi) \right)(s_0,a_0)\right|\\&=\left|\frac{1}{H'+1}P V^\pi(s_0,a_0)-\frac{H'}{H'+1}P\left(\frac{1}{H'}\sum^{H'}_{i=0} (V^i-i J^\pi) -V^\pi \right)(s_0,a_0)\right|\\&\le  \frac{(2t^\pi_{\mathrm{tar}}+1) \infn{V^\pi}}{H'+1}
\end{align*}
where last inequality is from Lemma~\ref{lem:tar}.

For the second term, let $Q'_j=\frac{1}{H'+1}\sum_{h=0}^{H'} \sum^h_{i=0}\big(r_j(s_i,a_i)-K^{\pi}(s_i,a_i)\big)$ by Hoeffding inequality with $N'= \frac{2R^2(H'+2)^2}{ \epsilon^2}\log(\frac{2}{\delta})$, 
\[\text{Prob}\left(\left| \frac{1}{N'}\sum^{N'}_{j=1}Q'_j(s_0,a_0)- \expec \left[\frac{1}{N'}\sum^{N'}_{j=1}\frac{1}{H'+1}\sum_{h=0}^{H'} \sum^h_{i=0}\big(r_j(s_i,a_i)-K^{\pi}(s_i,a_i)\big)\right]\right|  \le \epsilon \right) \ge 1-\delta.\]
By union bound over $(s,a) \in \cS \times \cA$, we conclude the result.
\end{proof}

We are now ready to prove Theorem.
\begin{proof}
In Lemma \ref{lem:approxq}, let $H' = \frac{3(2t^\pi_{\mathrm{tar}}+1) \infn{V^\pi}}{\epsilon'}, \epsilon=\frac{\epsilon'}{3}$. Then we have sample complexity
    \begin{align*}
        N'H' |\cS||\cA|& =\frac{2R^2H'(H'+2)^2|\cS||\cA|}{ \left(\frac{\epsilon'}{3}\right)^2}\log\left(\frac{2|\cS||\cA|}{\delta}\right)
        \\& =\mathcal{O}\left( \frac{R^2(t^\pi_{\mathrm{tar}})^3\infn{Q^\pi}^3|\cS||\cA|}{ (\epsilon')^5}\log\left(\frac{2|\cS||\cA|}{\delta}\right)\right)
    \end{align*}
where we used $\infn{V^\pi} \le \infn{Q^\pi}.$ Also, by Lemma~\ref{lem:approxk}, with sample complexity
    \begin{align*}
        NH|\cS||\cA|&=\frac{32|\cS||\cA| (\infn{V^\pi}+R)R^2}{ \left(\frac{2}{3(H'+2)}\right)^3}\log\left(\frac{2|\cS||\cA|}{\delta}\right)
        \\ = & \mathcal{O}\left(\frac{(t^\pi_{\mathrm{tar}})^3 \infn{Q^\pi}^4 R^3|\cS||\cA|}{ (\epsilon')^3}\log\left(\frac{2|\cS||\cA|}{\delta}\right)\right)
    \end{align*}
    we can have $\infn{K^\pi-\hat{K}^\pi} \le \frac{2\epsilon'}{3(H'+2)}$. Then this implies
    $\infn{\hat{Q}^{\pi}(\cdot, \cdot\cdot)\mathbf{1}_{\cR}(\cdot) - Q^\pi(\cdot, \cdot\cdot) \mathbf{1}_{\cR}(\cdot)}  \le \frac{\epsilon'}{3}+\frac{\epsilon'}{3}+\frac{\epsilon'}{3} \le \epsilon'$

    Therefore, combining previous results, with $1-\delta$ probability, we have $\infn{G^{\pi}-\hat{G}^{\pi}} \le \epsilon'$ with total sample complexity is 
    \[ \mathcal{O}\left( \frac{R^3(t^\pi_{\mathrm{tar}})^3\infn{Q^\pi}^4|\cS||\cA|}{ (\epsilon')^6}\log\left(\frac{2|\cS||\cA|}{\delta}\right)\right)\]
\end{proof}

\subsection{Proof of Lemma~\ref{lem:class}}
Denote $\cF_t =\{ \sigma (\{s_i : 0\le i \le t \})\}$ as the natural filtration generated by the sampling process in Markov chain.

First we prove the following lemma. 
\begin{lemma}\label{lem:cl}
Let $s_0 \in \mathcal{R}_i$ and $\pi \in \Pi_+$. For given deterministic $t,n>0$, 
   \[ \text{Prob}((t^i_{\mathrm{cov}}-t)_+\ge n \,|\, \cF_t)  \le \frac{\expec [(t^i_{\mathrm{cov}}-t)_+ \,|\, \cF_t]}{n}\le \frac{t^{\pi,i}_{\mathrm{cov}}}{n}\]
\end{lemma}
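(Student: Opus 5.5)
The first inequality is Markov's inequality applied conditionally on $\cF_t$. The random variable $(t^i_{\mathrm{cov}}-t)_+$ is nonnegative, so for any $n>0$ we have
\[
\text{Prob}\big((t^i_{\mathrm{cov}}-t)_+\ge n \,|\, \cF_t\big)\le \expec\big[(t^i_{\mathrm{cov}}-t)_+ \,|\, \cF_t\big]/n
\]
almost surely. This step needs no further work.

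The second inequality is the substantive part. I plan to use the Markov property at the deterministic time $t$. Start from $s_0\in\cR_i$. The class $\cR_i$ is closed for every $\pi\in\Pi_+$ by Fact~\ref{fact:classification}, so $s_t\in\cR_i$ almost surely.

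Let $A_t=\{s_0,\dots,s_{t-1}\}$ be the set of states already visited. On the event $\{t^i_{\mathrm{cov}}\le t\}$ the quantity $(t^i_{\mathrm{cov}}-t)_+$ vanishes, and the bound holds trivially.

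On the complementary event, the chain has not yet covered $\cR_i$, and $t^i_{\mathrm{cov}}-t$ is the additional time needed to visit every state of $\cR_i\setminus A_t$. This additional time is at most the full cover time of the shifted chain $(s_{t+j})_{j\ge0}$, which visits all of $\cR_i$ and in particular all of the unvisited states.

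More precisely, I will write $t^i_{\mathrm{cov}}\le t+\tilde t^{\,i}_{\mathrm{cov}}\circ\theta_t$ on this event, where $\theta_t$ is the shift operator and $\tilde t^{\,i}_{\mathrm{cov}}$ is the cover time of the shifted chain. One indexing detail must be checked here. The cover time is defined as the first $t$ with $\{s_j\}_{j=0}^{t-1}=\cR_i$, so covering the states in $A_t$ again is harmless: the visits of the shifted chain are a superset of what is still needed.

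Then I apply the strong (here simple) Markov property. Conditionally on $\cF_t$, the shifted chain is a Markov chain with kernel $P^\pi$ started at $s_t\in\cR_i$. This gives
\[
\expec\big[(t^i_{\mathrm{cov}}-t)_+\,|\,\cF_t\big]\le \expec_\pi\big[t^i_{\mathrm{cov}}\,|\,s_0=s_t\big]\le \max_{s\in\cR_i}\expec_\pi\big[t^i_{\mathrm{cov}}\,|\,s_0=s\big]=t^{\pi,i}_{\mathrm{cov}}.
\]

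The main obstacle I expect is making the domination $t^i_{\mathrm{cov}}\le t+\tilde t^{\,i}_{\mathrm{cov}}\circ\theta_t$ precise with the paper's set-equality definition of cover time and its indexing conventions. I will handle it with a pathwise argument: if every state of $\cR_i$ appears in $s_t,\dots,s_{t+T-1}$, then every state appears in $s_0,\dots,s_{t+T-1}$. Finiteness of $t^{\pi,i}_{\mathrm{cov}}$, which the paper cites, ensures the conditional expectation is well defined.
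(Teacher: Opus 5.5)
Your proposal is correct and follows essentially the same route as the paper: conditional Markov's inequality gives the first bound, and for the second you dominate $(t^i_{\mathrm{cov}}-t)_+$ by the cover time of the chain restarted at time $t$, then use the Markov property at $s_t\in\cR_i$ to bound its conditional expectation by $t^{\pi,i}_{\mathrm{cov}}$. You also note explicitly that closedness of $\cR_i$ keeps every $s_j$ inside $\cR_i$, which the set-equality definition of cover time requires and which the paper leaves implicit.
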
 
\begin{proof}
    First inequality is from Markov's inequality. For second inequality, define new cover time $t'_{\mathrm{cov}}$ start from $t$, i.e, $t'_{\mathrm{cov}}=  \inf\{u\ge 0: \{s_j\}^{u+t-1}_{j=t}=\mathcal{R}_i\}$ then $t^i_{\mathrm{cov}}-t \le t'_{\mathrm{cov}}$ almost surely by definition of cover time, and this implies $\expec [(t^i_{\mathrm{cov}}-t)_+ \,|\, \cF_t] \le \expec [t'_{\mathrm{cov}} \,|\, \cF_t]= \expec [t'_{\mathrm{cov}} \,|\, s_t] \le t^{\pi,i}_{\mathrm{cov}}$.
\end{proof}
From Lemma~\ref{lem:cl} and definition of $t^{\pi}_{\mathrm{cov}}$, we can directly obtain following corollary. 
\begin{corollary}\label{cor:cl}
Let $s_0 \in \mathcal{R}$ and $\pi \in \Pi_+$. For given deterministic $t,n>0$, 
   \[ \text{Prob}((t_{\mathrm{cov}}-t)_+\ge n \,|\, \cF_t)  \le \frac{\expec [(t_{\mathrm{cov}}-t)_+ \,|\, \cF_t]}{n}\le \frac{t^{\pi}_{\mathrm{cov}}}{n}\]
   where $t_{\mathrm{cov}}=  \inf\{u\ge 0: \{s_j\}^{u-1}_{j=0}=\mathcal{R}_i \text{\,\,for some $i$}\}$ 
\end{corollary}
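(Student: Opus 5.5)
The statement just before this point is Corollary~\ref{cor:cl}. I will derive it directly from Lemma~\ref{lem:cl} by conditioning on which recurrent class the chain starts in.

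\textbf{Step 1: restrict to the starting class.} Fix $s_0\in\cR$. Since $\cR=\bigcup_{i=1}^m\cR_i$ is a disjoint union, there is a unique index $i$ with $s_0\in\cR_i$. By Fact~\ref{fact:classification}, each $\cR_i$ is closed under every $\pi\in\Pi_+$: for $s\in\cR_i$ and $s'\notin\cR_i$ we have $P(s'\mid s,a)=0$, as noted in the proof of Lemma~\ref{lem:con}. Hence almost surely $s_j\in\cR_i$ for all $j\ge 0$.

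\textbf{Step 2: identify $t_{\mathrm{cov}}$ with $t^i_{\mathrm{cov}}$.} The visited set $\{s_j\}_{j=0}^{u-1}$ is nonempty and contained in $\cR_i$. It therefore cannot equal $\cR_{i'}$ for any $i'\neq i$, because distinct classes are disjoint and nonempty. So the event ``$\{s_j\}_{j=0}^{u-1}=\cR_{i'}$ for some $i'$'' coincides with the event $\{s_j\}_{j=0}^{u-1}=\cR_i$. Taking the infimum over $u$ gives $t_{\mathrm{cov}}=t^i_{\mathrm{cov}}$ almost surely.

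\textbf{Step 3: apply Lemma~\ref{lem:cl}.} Since $t_{\mathrm{cov}}=t^i_{\mathrm{cov}}$ almost surely, we have $(t_{\mathrm{cov}}-t)_+=(t^i_{\mathrm{cov}}-t)_+$ almost surely. Their conditional probabilities and conditional expectations given $\cF_t$ therefore agree. Lemma~\ref{lem:cl} then yields the first inequality (Markov's inequality) and the bound $\expec[(t_{\mathrm{cov}}-t)_+\mid\cF_t]\le t^{\pi,i}_{\mathrm{cov}}$. Finally, $t^{\pi,i}_{\mathrm{cov}}\le\max_{1\le i'\le m}t^{\pi,i'}_{\mathrm{cov}}=t^\pi_{\mathrm{cov}}$ by definition.

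\textbf{Main obstacle.} The only delicate point is that the index $i$ appearing in $t_{\mathrm{cov}}$ could a priori be any class. This is settled by Step 2, using closedness of $\cR_i$ and disjointness of the classes. Everything else is routine.
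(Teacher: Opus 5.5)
Your proposal is correct and follows the same route as the paper, which simply states that the corollary follows directly from Lemma~\ref{lem:cl} together with $t^{\pi,i}_{\mathrm{cov}}\le t^{\pi}_{\mathrm{cov}}$. Your Step~2 uses closedness and disjointness of the classes to show $t_{\mathrm{cov}}=t^{i}_{\mathrm{cov}}$ almost surely for the class $\cR_i\ni s_0$; this makes explicit an identification that the paper leaves implicit.
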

Now, we prove Lemma~\ref{lem:class}.
\begin{proof}
    Suppose $s_0$ was transient state. If $k=\lceil t_{\frac{1}{2}, \pi} \log (\frac{1}{\delta})\rceil$ such that $\infn{(\bar{T}^\pi)^{k}}< \delta$ by definition of transient half-life, and this implies with $1-\delta$ probability, $s_{k}$ is recurrent. 

Now, suppose $s_0$ was recurrent. Then, by Markov's inequality and $\expec[t_{\mathrm{cov}} \,|\, s_0 \in \mathcal{R}] \le t^\pi_{\mathrm{cov}}$, $ \text{Prob}(t_{\mathrm{cov}} \ge 3t^\pi_{\mathrm{cov}} \,|\,  s_0 \in \mathcal{R})  \le \text{Prob}(t_{\mathrm{cov}} \ge et^\pi_{\mathrm{cov}} \,|\,  s_0 \in \mathcal{R})\le e^{-1}$. Then, we have $\text{Prob}(t_{\mathrm{cov}} \ge 3kt^\pi_{\mathrm{cov}}|\,  s_0 \in \mathcal{R}) \le e^{-k}$, since by induction,
\begin{align*}
 &\text{Prob}(t_{\mathrm{cov}} \ge 3kt^\pi_{\mathrm{cov}} \,|\,  s_0 \in \mathcal{R}) \\&=\expec[\mathbf{1}_{t_{\mathrm{cov}} \ge 3(k-1) t^\pi_{\mathrm{cov}}}\text{Prob}(t_{\mathrm{cov}} \ge 3kt^\pi_{\mathrm{cov}} \,|\, \cF_{3(k-1)t^\pi_{\mathrm{cov}}})\,|\,  s_0 \in \mathcal{R}]
 \\&=\expec[\mathbf{1}_{t_{\mathrm{cov}} \ge 3(k-1)t^\pi_{\mathrm{cov}}}\text{Prob}((t_{\mathrm{cov}}-3(k-1)t^\pi_{\mathrm{cov}})_+ \ge 3t^\pi_{\mathrm{cov}} \,|\, \cF_{3(k-1)t^\pi_{\mathrm{cov}}})\,|\,  s_0 \in \mathcal{R}]
 \\& \le e^{-1}\expec[\mathbf{1}_{t_{\mathrm{cov}} \ge 3(k-1)t^\pi_{\mathrm{cov}}}\,|\,  s_0 \in \mathcal{R}]\\&\le e^{-k}. 
\end{align*}
where the second to the last inequality is from  Corollary~\ref{cor:cl}. Therefore, $\text{Prob}(t_{\mathrm{cov}} \le 3kt^\pi_{\mathrm{cov}} \,|\,  s_0 \in \mathcal{R} ) \ge 1-\delta$ with sample complexitiy $ \lceil 3t^\pi_{\mathrm{cov}}\log \frac{1}{\delta}\rceil$.

Define $A=\{s_{M_1} \in \mathcal{R} \}$, $B =\{ \cup^{M_1+M_2-1}_{i=M_1}\{s_i\}  =\mathcal{R}_j \text{\,for some\,} j\}$. Then,
$\text{Prob} (A\cap B )=\text{Prob} (A) \text{Prob} (B \,|\, A ) =\text{Prob} (A) \text{Prob} (B \,|\, s_{M_1} \in \mathcal{R}  ) \ge (1-\frac{\delta}{2}) (1-\frac{\delta}{2})\ge 1-\delta$ with sample complexity $M_1+M_2$ where second to the last inequality comes from previous arguments.
\end{proof}

\subsection{Inexact $\alpha$-clipped policy mirror ascent}
First, we consider \emph{inexact policy mirror ascent}
\begin{align*}
\pi_{k+1}(\cdot \,|\, s)
=
\argmax_{p \in \cM_\alpha(\mathcal{A})}
\left\{
\eta_k \sum_{a\in \cA} \hat{G}^{\pi_k}(s,a) p(a) 
- D(p(\cdot),\pi_k(\cdot\,|\, s))
\right\},
\qquad \forall s \in \mathcal{S},
\end{align*}
where $\hat{G}^{\pi_k}$ is an inexact evaluation of $G^{\pi_k}$.
We first study the convergence properties of inexact policy mirror ascent under the following
assumption.
\begin{assump}\label{assum:inexact}
The inexact evaluations $\hat{G}^{\pi_k}$ satisfy
\begin{align*}
\left\|
\hat{G}^{\pi_k} - G^{\pi_k}
\right\|_\infty
\le \epsilon,
\qquad \forall k \ge 0 .
\end{align*}
\end{assump}
We first prove the key Lemma, counterpart of Lemma \ref{lem:ngd_descent}.

\begin{lemma}\label{inexact:mon}
For given $\pi_0\in \Pi_\alpha$ and $\mu \in \cM_+(\cS)$, the inexact $\alpha$-clipped policy mirror ascent with step size $\eta_k>0$ generates a sequence of policies $\{\pi_{k}\}^\infty_{k=1}$ satisfying
\begin{align*}
\sum_{a\in\cA}\hat{G}^{\pi_k}(s,a)(\pi_{k}(\cdot \,|\, s)-\pi_{k+1}(\cdot \,|\, s)) 
\le 0,
\qquad \forall s \in \mathcal{S},
\end{align*}
and
\begin{align*}
J_\mu^{\pi_k}-J_\mu^{\pi_{k+1}} 
\le
2  B_\alpha \epsilon .
\end{align*}
\end{lemma}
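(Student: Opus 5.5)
We follow the proof of Lemma~\ref{lem:7'} almost verbatim, with $G^{\pi_k}$ replaced by $\hat{G}^{\pi_k}$ in the three-point inequality. The only new ingredient is an error term controlled through Assumption~\ref{assum:inexact} and the mass bound $B_\alpha$ of Lemma~\ref{lem:bound}.

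\textbf{First claim.} For each $s\in\cS$ we apply Fact~\ref{fact:bregman} with $\mathcal{C}=\cM_\alpha(\cA)$ and $\phi(p)=-\eta_k\sum_a \hat{G}^{\pi_k}(s,a)p(a)$; this $\phi$ is linear, hence proper, closed and convex. This yields the analogue of $(*)$:
\[
\sum_{a}\hat{G}^{\pi_k}(s,a)\big(p(a)-\pi_{k+1}(a\,|\,s)\big)+\tfrac{1}{\eta_k}D\big(\pi_{k+1}(\cdot\,|\,s),\pi_k(\cdot\,|\,s)\big)\le \tfrac{1}{\eta_k}D\big(p,\pi_k(\cdot\,|\,s)\big)-\tfrac{1}{\eta_k}D\big(p,\pi_{k+1}(\cdot\,|\,s)\big).
\]
Setting $p=\pi_k(\cdot\,|\,s)\in\cM_\alpha(\cA)$ kills the $D(p,\pi_k)$ term. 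Nonnegativity of the Bregman divergences then gives $\sum_a \hat{G}^{\pi_k}(s,a)\big(\pi_k(a\,|\,s)-\pi_{k+1}(a\,|\,s)\big)\le 0$ for every $s$.

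\textbf{Second claim.} We use Lemma~\ref{lem:pdl} with the pair $(\pi_{k+1},\pi_k)$, both in $\Pi_\alpha\subset\Pi_+$:
\[
J_\mu^{\pi_k}-J_\mu^{\pi_{k+1}}=\sum_{s}\rho_\mu^{\pi_{k+1}}(s)\sum_a G^{\pi_k}(s,a)\big(\pi_k(a\,|\,s)-\pi_{k+1}(a\,|\,s)\big).
\]
Here we use that $G^{\pi_k}$ equals $Q^{\pi_k}$ on $\cR$ and $K^{\pi_k}$ on $\cT$, and that $\rho_\mu^{\pi_{k+1}}$ is $d^{\pi_{k+1}}_\mu$ on $\cR$ and $\delta^{\pi_{k+1}}_\mu$ on $\cT$. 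Next we write $G^{\pi_k}=\hat{G}^{\pi_k}+(G^{\pi_k}-\hat{G}^{\pi_k})$. Since $\rho_\mu^{\pi_{k+1}}\ge 0$, the first claim makes the $\hat{G}$ part nonpositive. For the error part, H\"older's inequality at each state gives
\[
\Big|\sum_a (G^{\pi_k}-\hat{G}^{\pi_k})(s,a)\big(\pi_k(a\,|\,s)-\pi_{k+1}(a\,|\,s)\big)\Big|\le \epsilon\,\|\pi_k(\cdot\,|\,s)-\pi_{k+1}(\cdot\,|\,s)\|_1\le 2\epsilon .
\]
Summing against $\rho_\mu^{\pi_{k+1}}$ gives at most $2\epsilon\|\rho_\mu^{\pi_{k+1}}\|_1\le 2B_\alpha\epsilon$. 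For this last step we need $\pi_{k+1}\in\Pi_\alpha$, which holds by construction since the argmax is taken over $\cM_\alpha(\cA)$.

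\textbf{Main point of care.} The argument is routine; the only step needing attention is the identification of $\sum_s\rho_\mu^{\pi_{k+1}}(s)\sum_a G^{\pi_k}(s,a)(\cdot)$ with the performance-difference expression. This requires splitting over $\cR$ and $\cT$ consistently with Lemma~\ref{lem:pdl}, which relies on the shared recurrent/transient classification of Fact~\ref{fact:classification}. It also requires the mass bound $B_\alpha$ to apply to $\pi_{k+1}$, so the clipping to $\Pi_\alpha$ is essential here.
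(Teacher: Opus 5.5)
Your proposal is correct and follows essentially the paper's proof. The first claim comes from the three-point inequality with $\hat{G}^{\pi_k}$ in place of $G^{\pi_k}$ and $p=\pi_k(\cdot\,|\,s)$, as in Lemma~\ref{lem:7'}; the second comes from Lemma~\ref{lem:pdl} applied to $(\pi_{k+1},\pi_k)$, splitting $G^{\pi_k}=\hat{G}^{\pi_k}+(G^{\pi_k}-\hat{G}^{\pi_k})$, dropping the nonpositive $\hat{G}^{\pi_k}$ part, and bounding the error by H\"older's inequality and $\|\rho_\mu^{\pi_{k+1}}\|_1\le B_\alpha$ (where you correctly note, as the paper does only implicitly, that this needs $\pi_{k+1}\in\Pi_\alpha$).
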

\begin{proof}
The first inequality directly follows from the same arguments as in Lemma \ref{lem:7'}. By Lemma~\ref{lem:pdl},
\begin{align*}
  J_\mu^{\pi_k}-J_\mu^{\pi_{k+1}} 
&=
\sum_{s \in \cS}\rho_\mu^{\pi_{k+1}}(s)\sum_{a\in \cA} G^{\pi_k}(a,s)(\pi_{k}(a\,|\,s) - \pi_{k+1}(a\,|\,s))
\\
&=
\sum_{s \in \cS}\rho_\mu^{\pi_{k+1}}(s)\sum_{a\in \cA} \hat{G}^{\pi_k}(a,s)(\pi_{k}(a\,|\,s) - \pi_{k+1}(a\,|\,s))
\\
&\quad
+
\sum_{s \in \cS}\rho_\mu^{\pi_{k+1}}(s)\sum_{a\in \cA} (G^{\pi_k}(a,s)-\hat{G}^{\pi_k}(a,s))(\pi_{k}(a\,|\,s) - \pi_{k+1}(a\,|\,s)).
\end{align*}

For the second term,  
\begin{align*}
\sum_{a\in \cA} (G^{\pi_k}(a,s)-\hat{G}^{\pi_k}(a,s))(\pi_{k}(a\,|\,s) - \pi_{k+1}(a\,|\,s))
&\le
\left\|
G^{\pi_k} - \hat{G}^{\pi_k}
\right\|_\infty
\left\|
\pi_{k}(a\,|\,s) - \pi_{k+1}(a\,|\,s)
\right\|_1
\nonumber\\
&\le
2 \left\|
\hat{G}^{\pi_k} - G^{\pi_k}
\right\|_\infty
\nonumber\\
&\le
2\epsilon ,
\end{align*}
where the first inequality is from Holder's inequality and the last inequality follows from Assumption \ref{assum:inexact}. Since  $\max_{\pi \in \Pi_{\alpha}}\|\rho^\pi\|_1 \le B_\alpha$,  we get the desired result. 

\end{proof}

Following fact will be used in the proof of Theorem \ref{thm:inexact}. 
\begin{fact}\label{fact:seq}[\cite{xiao2022convergence}]
Suppose $0 < \alpha < 1$, $b > 0$, and a nonnegative sequence
$\{a_k\}$ satisfies
\begin{align*}
a_{k+1} \le \alpha a_k + b,
\qquad \forall k \ge 0 .
\end{align*}
Then for all $k \ge 0$,
\begin{align*}
a_k \le \alpha^k a_0 + \frac{b}{1 - \alpha}.
\end{align*}
\end{fact}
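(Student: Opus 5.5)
The statement is a classical fact about scalar recursions, so I will prove it by unrolling the recursion and then bounding the resulting geometric sum. Here $\alpha\in(0,1)$ denotes the contraction factor in Fact~\ref{fact:seq}, not the clipping parameter.

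First, I will show by induction on $k$ that
\[
a_k \le \alpha^k a_0 + b\sum_{j=0}^{k-1}\alpha^j .
\]
For $k=0$ the sum is empty and the claim reads $a_0\le a_0$. For the inductive step, assume the bound holds at $k$. Then $a_{k+1}\le \alpha a_k + b$. Since $\alpha>0$, multiplying the inductive hypothesis by $\alpha$ preserves the inequality, which gives
\[
a_{k+1}\le \alpha^{k+1}a_0 + b\sum_{j=1}^{k}\alpha^j + b = \alpha^{k+1}a_0 + b\sum_{j=0}^{k}\alpha^j .
\]

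Next, I will bound the geometric sum. Because $0<\alpha<1$, we have $\sum_{j=0}^{k-1}\alpha^j=\frac{1-\alpha^k}{1-\alpha}\le \frac{1}{1-\alpha}$. Combined with $b>0$, this yields $a_k\le \alpha^k a_0 + \frac{b}{1-\alpha}$, as claimed.

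There is no real obstacle here. The only point to watch is that multiplying the inductive bound by $\alpha$ keeps the inequality direction, which holds since $\alpha>0$. Nonnegativity of $\{a_k\}$ is not actually needed for the upper bound; it matters only for how the fact is applied later, in the proof of Theorem~\ref{thm:inexact}.
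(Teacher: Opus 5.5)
Your argument is correct: the induction giving $a_k \le \alpha^k a_0 + b\sum_{j=0}^{k-1}\alpha^j$, followed by the bound $\sum_{j=0}^{k-1}\alpha^j \le 1/(1-\alpha)$, is the standard unrolling argument, and the paper does not prove this fact itself but simply cites it from \cite{xiao2022convergence}. You are also right that nonnegativity of $\{a_k\}$ is never used (only $\alpha \ge 0$ and $b \ge 0$ are). In fact it is not needed in Theorem~\ref{thm:inexact} either: what is used there afterwards is $D_k \ge 0$, which lets the divergence term be dropped from $a_k$.
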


Now, we present the convergence result of inexact policy mirror ascent. 

\begin{theorem}\label{thm:inexact}
Consider a multichain MDP. Under Assumption~\ref{assum:inexact}, for $\pi_0 \in \Pi_\alpha$ and given $\mu \in \cM_+(\cS)$, the inexact $\alpha$-clipped policy mirror ascent with constant step size $\eta>0$ generates a sequence of policies $\{\pi_{k}\}^\infty_{k=1}$ satisfying
\[J_\mu^{\pi_\alpha}-J_\mu^{\pi_{k}}  
\;\le\; \frac{1}{k+1}\!\left( \frac{D_{\rho_\mu^{\pi_\alpha}}({\pi_\alpha}, \pi_0)}{\eta}
+ C_
\alpha (J^{\pi_\alpha}_{\mu}-J^{\pi_0}_{\mu}) \right)+(2C_\alpha+k+2)B_\alpha \epsilon.\]
and with  adaptive stepsize $\eta_{k+1}(C_\alpha - 1) \ge \eta_k C_\alpha>0$
generates a sequence of policies $\{\pi_{k}\}^\infty_{k=1}$ satisfying
\begin{equation}
J^{\pi_\alpha}-J_\mu^{\pi_k}
\le
\left(1 - \frac{1}{C_\alpha}\right)^k
\left(
J^{\pi_\alpha}_{\mu}-J^{\pi_0}_{\mu}
+
\frac{1}{\eta_0(C_\alpha - 1)} D_{\rho_\mu^{\pi_\alpha}}({\pi_\alpha}, \pi_0)
\right)
+
4C_\alpha B_\alpha  \epsilon.
\end{equation}
\end{theorem}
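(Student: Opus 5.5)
We mirror the proofs of Theorems~\ref{thm:ngd} and~\ref{thm:ngd_linear}, now tracking the error $\hat G^{\pi_k}-G^{\pi_k}$. Throughout we use Assumption~\ref{assum:inexact}, Lemma~\ref{inexact:mon}, Lemma~\ref{lem:pdl} and Lemma~\ref{lem:bound}.

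\textbf{Step 1: per-iteration inequality with $\hat G^{\pi_k}$.} Apply Fact~\ref{fact:bregman} with $\phi(p)=-\eta_k\langle \hat G^{\pi_k}(s,\cdot),p\rangle$ and $p=\pi_\alpha(\cdot\,|\,s)$. After adding and subtracting $\pi_k(\cdot\,|\,s)$, this gives the analogue of $(*)$ with $\hat G^{\pi_k}$ in place of $G^{\pi_k}$. Multiply by $\rho^{\pi_\alpha}_\mu(s)$ and sum over $s$.

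\textbf{Step 2: the two terms.} The left side splits into two terms.
\begin{itemize}
\item \emph{Second term.} Replace $\hat G$ by $G$. By Lemma~\ref{lem:pdl}, the exact version equals $U_k:=J^{\pi_\alpha}_\mu-J^{\pi_k}_\mu$. The replacement costs at most $\|\rho^{\pi_\alpha}_\mu\|_1\cdot\epsilon\cdot\|\pi_\alpha-\pi_k\|_1\le 2B_\alpha\epsilon$ by H\"older.
\item \emph{First term.} By Lemma~\ref{inexact:mon}, $\sum_a \hat G^{\pi_k}(s,a)(\pi_k-\pi_{k+1})(a\,|\,s)\le 0$ for every $s$. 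Since $\rho^{\pi_\alpha}_\mu(s)/\rho^{\pi_{k+1}}_\mu(s)\le C_\alpha$, the weighted sum is at least $C_\alpha\sum_s\rho^{\pi_{k+1}}_\mu(s)\sum_a\hat G^{\pi_k}(s,a)(\pi_k-\pi_{k+1})(a\,|\,s)$. Replacing $\hat G$ by $G$ there costs $2B_\alpha\epsilon$, again by H\"older, and Lemma~\ref{lem:pdl} turns the exact quantity into $J^{\pi_k}_\mu-J^{\pi_{k+1}}_\mu$. The first term is therefore at least $C_\alpha(J^{\pi_k}_\mu-J^{\pi_{k+1}}_\mu)-2C_\alpha B_\alpha\epsilon$.
\end{itemize}
Combining the two terms yields the key recursion
\[
U_k+C_\alpha(U_{k+1}-U_k)\le \tfrac{1}{\eta_k}\bigl(D_{\rho^{\pi_\alpha}_\mu}(\pi_\alpha,\pi_k)-D_{\rho^{\pi_\alpha}_\mu}(\pi_\alpha,\pi_{k+1})\bigr)+2(C_\alpha+1)B_\alpha\epsilon .
\]

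\textbf{Step 3: constant step size.} Set $\eta_k=\eta$ and sum over $i=0,\dots,k$. The divergences telescope and the $C_\alpha$ terms give $C_\alpha(U_{k+1}-U_0)\le C_\alpha(J^{\pi_\alpha}_\mu-J^{\pi_0}_\mu)$, since $U_{k+1}\ge 0$ by optimality of $\pi_\alpha$ on $\Pi_\alpha$. This bounds $\sum_{i\le k}U_i$.

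To pass from the sum to the last iterate, use Lemma~\ref{inexact:mon}: $U_i\ge U_k-2(k-i)B_\alpha\epsilon$. Summing gives $\sum_{i\le k}U_i\ge (k+1)U_k-k(k+1)B_\alpha\epsilon$. Dividing by $k+1$, the error becomes $2(C_\alpha+1)B_\alpha\epsilon+kB_\alpha\epsilon\le(2C_\alpha+k+2)B_\alpha\epsilon$. This growing-in-$k$ error from the near-monotonicity is the main bookkeeping point, and it matches the stated form.

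\textbf{Step 4: adaptive step size.} Divide the recursion by $C_\alpha$ and use $\eta_{k+1}(C_\alpha-1)\ge\eta_kC_\alpha$ as in the proof of Theorem~\ref{thm:ngd_linear}. With $a_k=U_k+D_{\rho^{\pi_\alpha}_\mu}(\pi_\alpha,\pi_k)/(\eta_k(C_\alpha-1))$, this gives $a_{k+1}\le(1-1/C_\alpha)a_k+b$ with $b=2(C_\alpha+1)B_\alpha\epsilon/C_\alpha$.

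Fact~\ref{fact:seq} then gives $a_k\le(1-1/C_\alpha)^ka_0+C_\alpha b$, where $C_\alpha b=2(C_\alpha+1)B_\alpha\epsilon\le 4C_\alpha B_\alpha\epsilon$ since $C_\alpha\ge1$. Finally drop the nonnegative divergence term from $a_k$ to obtain the bound on $U_k$.

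The main obstacle is the last-iterate conversion in the constant-step case, because exact monotonicity is lost. The rest follows the paper's exact proofs with H\"older error terms added.
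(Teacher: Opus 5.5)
Your proposal is correct and follows the paper's proof essentially step for step. It uses the same per-iteration inequality with $\hat G^{\pi_k}$, the same two H\"older error terms giving $2(C_\alpha+1)B_\alpha\epsilon$, the same near-monotonicity conversion from the averaged bound to the last iterate (which you spell out more carefully than the paper), and the same contraction via Fact~\ref{fact:seq} for the adaptive step size. One cosmetic sign slip: in Step~3 the term you bound using $U_{k+1}\ge 0$ should be $C_\alpha(U_0-U_{k+1})\le C_\alpha U_0$, i.e.\ the term after moving it to the right-hand side, not $C_\alpha(U_{k+1}-U_0)$.
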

\begin{proof}
Following the same arguments as in proof of Theorem \ref{thm:ngd},
\begin{align*}
    &\sum_{s \in \cS}\rho_\mu^{\pi_\alpha}(s)\sum_{a\in \cA} \hat{G}^{\pi_k}(a,s)(\pi_{k}(a\,|\,s) - \pi_{k+1}(a\,|\,s))
+ \sum_{s \in \cS}\rho_\mu^{\pi_\alpha}(s)\sum_{a\in \cA} \hat{G}^{\pi_k}(a,s)(\pi_\alpha(a\,|\,s) - \pi_{k}(a\,|\,s))\\ & \le\; \frac{1}{\eta_k} D_{\rho^{\pi_\alpha}_\mu}(\pi_\alpha, \pi_k) - \frac{1}{\eta_k} D_{\rho^{\pi_\alpha}_\mu}(\pi_\alpha, \pi_{k+1}) .
\end{align*}

For the first term,
\begin{align*}
&\sum_{s \in \cS}\rho^{\pi_\alpha}_\mu(s)\sum_{a\in \cA} \hat{G}^{\pi_k}(a,s)(\pi_{k}(a\,|\,s) - \pi_{k+1}(a\,|\,s))
\\& \ge C_\alpha \sum_{s \in \cS}\rho^{\pi_{k+1}}_\mu(s)\sum_{a\in \cA} \hat{G}^{\pi_k}(a,s)(\pi_{k}(a\,|\,s) - \pi_{k+1}(a\,|\,s))
\\&=C_\alpha \sum_{s \in \cS}\rho^{\pi_{k+1}}_\mu(s)\sum_{a\in \cA} G^{\pi_k}(a,s)(\pi_{k}(a\,|\,s) - \pi_{k+1}(a\,|\,s))\\&+C_\alpha \sum_{s \in \cS}\rho^{\pi_{k+1}}_\mu(s) \sum_{a\in \cA} (\hat{G}^{\pi_k}(a,s)-G^{\pi_k}(a,s))(\pi_k(a\,|\,s) - \pi_{k+1}(a\,|\,s)) 
 \\
&\ge C_\alpha \sum_{s \in \cS}\rho^{\pi_{k+1}}_\mu(s) \sum_{a\in \cA} G^{\pi_k}(a,s)(\pi_{k}(a\,|\,s) - \pi_{k+1}(a\,|\,s))-2C_\alpha B_\alpha \epsilon
\\&\ge C_\alpha (J_\mu^{\pi_{k}} - J_\mu^{\pi_{k+1}}) -  2C_\alpha B_\alpha \epsilon
\end{align*}
where second to the last inequality is from Lemma~\ref{inexact:mon} and Assumption~\ref{assum:inexact}, and last inequality comes from Lemma~\ref{lem:pdl}.

For the second term,
\begin{align*}
&\sum_{s \in \cS}\rho^{\pi_\alpha}_\mu(s)\sum_{a\in \cA} \hat{G}^{\pi_k}(a,s)(\pi_\alpha(a\,|\,s) - \pi_{k}(a\,|\,s))
\\&=\sum_{s \in \cS}\rho^{\pi_\alpha}_\mu(s) \sum_{a\in \cA} G^{\pi_k}(a,s)(\pi_\alpha(a\,|\,s) - \pi_{k}(a\,|\,s)) \\&+  \sum_{s \in \cS}\rho^{\pi_\alpha}_\mu(s) \sum_{a\in \cA} (\hat{G}^{\pi_k}(a,s)-G^{\pi_k}(a,s))(\pi_\alpha(a\,|\,s) - \pi_{k}(a\,|\,s))
\\&\ge J_\mu^{\pi_\alpha} - J_\mu^{\pi_k}   - 2  B_\alpha\epsilon.
\end{align*}
where last inequality comes from Lemma~\ref{lem:7'} and~ \ref{lem:pdl} and Assumption~\ref{assum:inexact}.

First, consider constant step size. We have
\[
J_\mu^{\pi_\alpha}-J_\mu^{\pi_{k}} 
\;\le\;\frac{1}{\eta_k} D_{\rho_\mu^{\pi_\alpha}}(\pi_\alpha, \pi_k) - \frac{1}{\eta_k} D_{\rho_\mu^{\pi_\alpha}}(\pi_\alpha, \pi_{k+1})
+ C_\alpha(J_{\mu}^{\pi_{k+1}}
 - J_{\mu}^{\pi_{k}})+2(C_\alpha+1)  B_\alpha\epsilon
\]
and summing over \(k\) gives
\[
\sum_{i=0}^{k}\big(J_\mu^{\pi_\alpha}- J_\mu^{\pi_{i}}  \big)
\;\le\;
\frac{1}{\eta} D_{\rho_\mu^{\pi_\alpha}}({\pi_\alpha}, \pi_0)  - \frac{1}{\eta} D_{\rho_\mu^{\pi_\alpha}}({\pi_\alpha}, \pi_{k+1}) 
+  C_\alpha(J_{\mu}^{\pi_{k+1}}
 - J_{\mu}^{\pi_{0}})+2(k+1)(C_\alpha+1)B_\alpha \epsilon
\]

By Lemma \ref{inexact:mon},  $-J_\mu^{\pi_{k}} 
-2  B_\alpha \epsilon \le -J_\mu^{\pi_{k-1}} $ and this implies
\[
J_\mu^{\pi_\alpha}-J_\mu^{\pi_{k}}  
\;\le\; \frac{1}{k+1}\!\left( \frac{D_{\rho_\mu^{\pi_\alpha}}({\pi_\alpha}, \pi_0)}{\eta}
+ C_
\alpha(J^{\pi_\alpha}_{\mu}-J^{\pi_0}_\mu) \right)+kB_\alpha\epsilon+2(C_\alpha+1)B_\alpha \epsilon.
\]

Second, consider adaptive step size.
\begin{equation*}
C_\alpha
\left(
U_{k+1} - U_k - 2B_\alpha \epsilon
\right)
+ U_k
\le
\frac{1}{\eta_k} D_k
-
\frac{1}{\eta_k} D_{k+1}
+
2B_\alpha \epsilon,
\end{equation*}
where $U_k = J_\mu^{\pi_\alpha}-J_\mu^{\pi_k} $ and $D_k = D_{\rho^{\pi_\alpha}_\mu}(\pi_\alpha, \pi_{k})$.
This implies
\begin{equation*}
C_\alpha(U_{k+1} - U_k) + U_k
\le
\frac{1}{\eta_k} D_k
-
\frac{1}{\eta_k} D_{k+1}
+
2B_\alpha( C_\alpha+1)\epsilon.
\end{equation*}

Dividing both sides by $C_\alpha$ and rearranging terms with adaptive stepsize yields 
\begin{equation*}
U_{k+1}
+
\frac{1}{\eta_{k+1} (C_\alpha-1)} D_{k+1}
\le
\left(1 - \frac{1}{C_\alpha}\right)
\left(
U_k
+
\frac{1}{\eta_k(C_\alpha - 1)} D_k
\right)
+
4B_\alpha \epsilon.
\end{equation*}
Finally, Fact \ref{fact:seq} with
 $
a_k
=
U_k
+
\frac{1}{\eta_k(C_\alpha - 1)} D_k,
\alpha = 1 - \frac{1}{C_\alpha},
b_k = 4B_\alpha  \epsilon,$ leads to
\[
U_k
\le
\left(1 - \frac{1}{C_\alpha}\right)^k
\left(
U_0
+
\frac{1}{\eta_0(C_\alpha - 1)} D_0
\right)
+
4C_\alpha B_\alpha  \epsilon .
\]

\end{proof}

\subsection{Proof of Theorem~\ref{thm:spmg}}
\begin{proof}
     First, consider constant step size with $K= 2\left( \frac{D_{\rho_\mu^{\pi_\alpha}}({\pi_\alpha}, \pi_0)}{\eta}
+ C_
\alpha (J^{\pi_\alpha}_{\mu}-J^{\pi_0}_{\mu})\right) /{\epsilon'}$. By Theorem  \ref{thm:PE} and union bound over $0\le k \le K-1$, we have $\infn{G^{\pi_k}-\hat{G}^{\pi_k} } \le   \frac{\epsilon'}{2(2C_\alpha+ K+2)B_\alpha }$  with sample complexity 
 \begin{align*}
    &K(HN+H'N')|\cS||\cA|=\\
& \mathcal{O}\left(\left(\frac{ t_{\mathrm{tar}}^3 \infn{Q^\pi}^4 R^{9} C_\alpha^6\left( \frac{D_{\rho_\mu^{\pi_\alpha}}(\pi_\alpha, \pi_0)}{\eta}\right)^6 B_\alpha^6|\cS||\cA|}{(\epsilon')^{12}}  \right)\log\left(\frac{2K|\cS||\cA|}{\delta}\right)\right)\end{align*} 
for all $0 \le k \le K-1$.
Then, by Theorem \ref{thm:inexact},  we have $J_\mu^{\pi_\alpha}-J_\mu^{\pi_k}\le \frac{\epsilon'}{2}+\frac{\epsilon'}{2} \le \epsilon'.$ 

For adaptive step size $\eta_{k+1}(C_\alpha  - 1) \ge \eta_k C_\alpha $ with $K = \frac{\log \left(2(J_\mu^{\pi_\alpha}- J_\mu^{\pi_{0}}
+ \frac{D_{\rho_\mu^{\pi_\alpha}}(\pi_\alpha, \pi_{0})}{\eta_0(C_\alpha-1)})/\epsilon
\right)}{ \log(C_\alpha /  (C_\alpha-1)) }$, By Theorem  \ref{thm:PE}  and union bound over $0\le k \le K-1$, $\infn{G^{\pi_k}-\hat{G}^{\pi_k} } \le   \frac{\epsilon'}{8 C_\alpha B_\alpha}$  with sample complexity
\[K(NH+N'H')|\cS||\cA|=\mathcal{O}\left( \frac{R^3C_\alpha^6 B_\alpha^6 t_{\mathrm{tar}}^3\infn{Q^\pi}^4 |\cS||\cA|}{ (\epsilon')^6}K\log\left(\frac{2K|\cS||\cA|}{\delta}\right)\right)\]
\
for all $0 \le k \le K-1$ .
Then, by Theorem \ref{thm:inexact},  we have $J_\mu^{\pi_\alpha}-J_\mu^{\pi_k}\le \frac{\epsilon'}{2}+\frac{\epsilon'}{2} \le \epsilon'.$ 
\end{proof}

\section{Weakly communicating MDPs}\label{weak_mdp}
In this section, we assume that MDP is weakly communicating. We will show that $\alpha$-clipped policy mirror ascent algorithm attains an $\epsilon$-optimal policy satisfying $J^\star_\mu-J^\pi_\mu \le \epsilon$ (i.e., not restricted to positive policies) by choosing $\alpha$ as a function of $\epsilon$. 

 \subsection{Continuity of $J^\pi$ in weakly communicating}
 We first argue about continuity of $J^\pi$ at optimal policy in weakly communicating MDP.
 \begin{lemma}\label{lem:weak}
 In weakly communicating MDP, the mappings $\pi \mapsto J^\pi$ and $\pi \mapsto J^\pi_\mu$ are continuous at optimal policies, and $J^\star_{+,\mu}= J^\star_\mu$ for any fixed $\mu \in \mathcal{M}(\mathcal{S})$. 
\end{lemma}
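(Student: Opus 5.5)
The plan rests on the fact that in a weakly communicating MDP the optimal gain is constant, $J^\star = g^\star\mathbf{1}$, and on a one-sided comparison: for any policy $\pi$, $J^\pi \le J^\star$ componentwise. Continuity at an optimal policy $\pi^\star$ therefore reduces to a lower-semicontinuity statement. We must show that for every $\epsilon>0$ there is a neighbourhood $U$ of $\pi^\star$ in $\Pi$ with $J^\pi \ge (g^\star-\epsilon)\mathbf{1}$ for all $\pi\in U$. The equality $J^\star_{+,\mu}=J^\star_\mu$ then follows at once: $\Pi_+$ is dense in $\Pi$, so we can pick $\pi_n\in\Pi_+$ with $\pi_n\to\pi^\star$, and continuity at $\pi^\star$ gives $J^{\pi_n}_\mu\to J^\star_\mu$. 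Combined with the trivial bound $J^\star_{+,\mu}\le J^\star_\mu$, this yields equality.

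For the lower bound I would use the optimality (Bellman) inequality rather than stationary distributions, since the recurrent structure can change near the boundary. Take a bias $h^\star$ solving $g^\star\mathbf{1}+h^\star = \max_a\{r(\cdot,a)+P(\cdot\,|\,\cdot,a)h^\star\}$; such an $h^\star$ exists for weakly communicating MDPs by \cite[Section~8.3, 9.1]{10.5555/528623}. Here a subtlety appears, because an arbitrary optimal $\pi^\star$ (in the gain sense) need not be greedy with respect to $h^\star$. So I first handle any $\pi$ near $\pi^\star$ via the residual
\[
e^\pi := r^\pi + P^\pi h^\star - h^\star - g^\star\mathbf{1},
\]
which is continuous (indeed affine) in $\pi$. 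Iterating and averaging gives
\[
J^\pi = g^\star\mathbf{1} + P^\pi_\star e^\pi,
\]
so $J^\pi \ge g^\star\mathbf{1} - \max_s (e^{\pi})^-(s)$, where $(\cdot)^-$ denotes the negative part. If $\pi^\star$ is greedy, then $e^{\pi^\star}=0$ and continuity of $e^\pi$ finishes the argument. For a general gain-optimal $\pi^\star$, $e^{\pi^\star}$ may be negative on states that are transient under $\pi^\star$. In that case I would instead choose $h^\star$ as a bias adapted to $\pi^\star$: take $h=V^{\pi^\star}$, so that $r^{\pi^\star}+P^{\pi^\star}h = g^\star\mathbf{1}+h$ exactly by the Bellman equation with $J^{\pi^\star}=g^\star\mathbf{1}$. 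Then $e^\pi := r^\pi+P^\pi V^{\pi^\star}-V^{\pi^\star}-g^\star\mathbf{1}$ vanishes at $\pi^\star$ and is affine in $\pi$.

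Given this, the bound $\|e^\pi\|_\infty\le \|\pi-\pi^\star\|_1(R+2\|V^{\pi^\star}\|_\infty)$ and the stochasticity of $P^\pi_\star$ yield
\[
J^\pi \ge g^\star\mathbf{1} - C\|\pi-\pi^\star\|.
\]
Together with $J^\pi\le g^\star\mathbf{1}$, this gives continuity at $\pi^\star$, for both $J^\pi$ and $J^\pi_\mu$.

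The main step to verify is the identity $J^\pi = g^\star\mathbf{1}+P^\pi_\star e^\pi$. It comes from multiplying $r^\pi = g^\star\mathbf{1} + (I-P^\pi)V^{\pi^\star} + e^\pi$ by $P^\pi_\star$, using $P^\pi_\star(I-P^\pi)=0$ and $P^\pi_\star\mathbf{1}=\mathbf{1}$. This holds for every $\pi\in\Pi$, including boundary policies, so no structural invariance of the kind in Fact~\ref{fact:classification} is needed. The only place weak communication enters is the constancy of the optimal gain, $J^{\pi^\star}=g^\star\mathbf{1}$, which I regard as the key (cited) ingredient.
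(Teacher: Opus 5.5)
Your proof is correct and is essentially the paper's argument. After multiplying by $\mu^\top$ and using $\mu^\top P^\pi_\star=(d^\pi_\mu)^\top$, your identity $J^\pi=g^\star\mathbf{1}+P^\pi_\star e^\pi$ is exactly the paper's reduced performance-difference identity $J^\pi_\mu-J^\star_\mu=\sum_s d^\pi_\mu(s)\sum_a(\pi-\pi_\star)(a\,|\,s)Q^{\pi_\star}(s,a)$ (the constant $K^{\pi_\star}=g^\star\mathbf{1}$ drops out), followed by the same H\"older bound. Deriving it directly from $P^\pi_\star P^\pi=P^\pi_\star$, rather than citing Lemma~\ref{lem:pdl} (stated only for $\pi,\pi'\in\Pi_+$), is a small gain, since it also covers a deterministic $\pi^\star$ and nearby boundary policies; your density argument also spells out the step to $J^\star_{+,\mu}=J^\star_\mu$ that the paper leaves implicit.
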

\begin{proof}
For weakly communicating MDP, optimal averge-reward is uniform vector \cite[Theorem 8.32]{10.5555/528623}. Then, in the proof of Lemma~\ref{lem:pdl}, we have $(P^{\pi_\star}_\star-I) J^{\star}=0$, and  we have
\begin{align*}
J_\mu^{\pi} - J_\mu^{\star}
&=    \sum_{s \in \cR}
\sum_{a \in \cA}
d^{\pi}_{\mu}(s) \big(\pi(a \mid s)-\pi_\star(a \mid s)\big)Q^{\pi_\star}(s,a). 
\end{align*}
By  Holder's inequality,   $J_\mu^{\star}-J_\mu^{\pi} \le \sum_{s \in \cR} d^\pi_\mu(s)
 \big \|\pi(\cdot \mid s)-\pi_\star(\cdot \mid s)\big\|_1\infn{Q^{\pi_\star}} $. So $\lim_{\pi \rightarrow \pi_\star} J^\pi_\mu= J^\star_\mu$ for $\pi\in \Pi_+$.

\end{proof}

 Given $\mu \in \cM(\cS)$, we say $\pi$ is an $\epsilon$-optimal policy if $J_\mu^{\star}-J_\mu^{\pi} \le \epsilon$. Then, Lemma~\ref{lem:weak} guarantee that we can find $\epsilon$-optimal policy for arbitrary $\epsilon>0$ by only considering policies in $\Pi_+$.

\subsection{Policy gradient theorem in weakly communicating MDP}

We first state following key lemma for weakly communicating  MDPs.
\begin{lemma}\label{lemm:weak}
Consider a weakly communicating MDP. For every $\pi \in \Pi_+$, $J^\pi =c^\pi \mathbf{1}$ for some $c^\pi \in \real$.  
\end{lemma}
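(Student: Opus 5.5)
The plan is to show that, in a weakly communicating MDP, every $\pi\in\Pi_+$ induces a chain with exactly one recurrent class. Once this is established, the canonical form of $P^\pi_\star$ gives $J^\pi=P^\pi_\star r^\pi=c^\pi\mathbf{1}$ with $c^\pi=(g^\pi_1)^\top r^\pi_{\cR_1}$.

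\textbf{Step 1: the recurrent set lies inside the closed communicating set.} By the definition of weakly communicating, $\cS=\cC\cup\cT_0$. Here $\cC$ is closed, and every state of $\cC$ is accessible from every other state of $\cC$ under some deterministic policy. The set $\cT_0$ consists of states that are transient under every policy. In particular, every state of $\cT_0$ is transient under $\pi$, so all recurrent classes $\cR_1,\dots,\cR_m$ of $P^\pi$ lie inside $\cC$.

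\textbf{Step 2: accessibility under some policy implies accessibility under $\pi$.} This is the key observation. Take $s,s'\in\cC$, and let $\sigma$ be a deterministic policy under which $s'$ is accessible from $s$. Then there is a path $s=x_0,x_1,\dots,x_n=s'$ with $P(x_{j+1}\mid x_j,\sigma(x_j))>0$ for each $j$. Since $\pi(a\mid x)>0$ for all $a$, we have
\[P^\pi(x_j,x_{j+1})\ge \pi(\sigma(x_j)\mid x_j)\,P(x_{j+1}\mid x_j,\sigma(x_j))>0.\]
Hence $s'$ is accessible from $s$ under $P^\pi$. Therefore all states of $\cC$ communicate with one another under $P^\pi$.

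\textbf{Step 3: exactly one recurrent class.} Suppose $\cR_i$ and $\cR_j$ are two distinct recurrent classes, both contained in $\cC$. By Step 2, a state of $\cR_j$ is accessible from a state of $\cR_i$. But recurrent classes are closed, and distinct classes are not accessible from one another. This is a contradiction, so $m=1$.

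Moreover, $\cC$ is closed and communicating under $P^\pi$, and it is finite, so all of $\cC$ is recurrent; in fact $\cR_1=\cC$, although only $m=1$ is needed. From the canonical form,
\[P^\pi_\star=\begin{bmatrix}\mathbf{1}g_1^\top&0\\ (I-T^\pi)^{-1}S_1^\pi\mathbf{1}g_1^\top&0\end{bmatrix}.\]
The rows of $S_1^\pi$ and $T^\pi$ together form the transient rows of a stochastic matrix, so $S_1^\pi\mathbf{1}=(I-T^\pi)\mathbf{1}$. It follows that $(I-T^\pi)^{-1}S_1^\pi\mathbf{1}=\mathbf{1}$, using Fact~\ref{fact:trans_spec} for invertibility. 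Hence every row of $P^\pi_\star$ equals $g_1^\top$ on $\cR_1$ and zero elsewhere, and $J^\pi=c^\pi\mathbf{1}$ with $c^\pi=\sum_{s\in\cR_1}g_1(s)r^\pi(s)$.

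\textbf{Main obstacle.} The only delicate point is Step 1: it needs the states outside $\cC$ to be transient under $\pi$ in particular, which holds because they are transient under every policy. Everything else is the positivity argument of Step 2.
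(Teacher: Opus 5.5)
Your proof is correct and follows essentially the paper's route. Positivity of $\pi$ makes $P^\pi$ inherit every accessibility relation available under other policies, which forces a single recurrent class, and then $P^\pi_\star=\mathbf{1}g^\top$ gives a constant gain. The paper instead takes a reference policy with a single recurrent class from the cited textbook result (Proposition~8.3.1) and transfers its structure to $\Pi_+$; you lift the definition's pairwise accessibility paths directly, using the ``transient under every policy'' clause, and verify $(I-T^\pi)^{-1}S_1^\pi\mathbf{1}=\mathbf{1}$ explicitly, which makes your version more self-contained.
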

\begin{proof}
    By  \citep[Proposition 8.3.1]{10.5555/528623}, there exist policy $\pi$ such that $P^\pi$ has single recurrent class and trasient states. Then by definition of accessibility,  for $\pi' \in \Pi_+$, $P^{\pi'}$ also has same single recurrent class and trasient states. Lastly, since $d^{\pi'} = \mathbf{1}(g^{\pi'})^\top$ where $g^{\pi'}$ is unique stationary distribution of $P^{\pi'}$, we conclude that $J^{\pi'} = (g^{\pi'})^\top r^{\pi'} \mathbf{1}$. 
\end{proof}
Applying Lemma~\ref{lemm:weak} into the proof of Lemma \ref{lem:pdl}, we can directly obtain following performance difference lemma for weakly communicating MDP. 
\begin{corollary}
Consider a weakly communicating MDP. For $\pi, \pi' \in \Pi_+$ and $\mu\in \mathcal{M}_+(\mathcal{S})$, 
\begin{align*}
J_\mu^{\pi} - J_\mu^{\pi'}
&=    \sum_{s \in \cR}
\sum_{a \in \cA}
d^{\pi}_{\mu}(s) \big(\pi(a \mid s)-\pi'(a \mid s)\big)Q^{\pi'}(s,a). 
\end{align*}
\end{corollary}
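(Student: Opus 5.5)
The plan is to reuse the proof of Lemma~\ref{lem:pdl} almost verbatim and show that the transient term vanishes once Lemma~\ref{lemm:weak} is available. Recall that the proof of Lemma~\ref{lem:pdl} starts from the identity
\[
J^\pi-J^{\pi'} = P^\pi_\star(J^\pi-J^{\pi'}) + (P^\pi_\star - I)J^{\pi'}.
\]
The first summand was shown, using only the Bellman equations and $P^\pi_\star P^\pi = P^\pi_\star$, to equal
\[
P^\pi_\star(\Theta_\pi-\Theta_{\pi'})(K^{\pi'}+Q^{\pi'}).
\]
After multiplying by $\mu^\top$ and using $d^\pi_\mu(s)=0$ for $s\in\cT$, this gives
\[
\sum_{s\in\cR} d^\pi_\mu(s)\sum_{a}(\pi(a\mid s)-\pi'(a\mid s))\bigl(K^{\pi'}(s,a)+Q^{\pi'}(s,a)\bigr).
\]
Lemma~\ref{lem:con} then removed the $K^{\pi'}$ part on recurrent states.

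The key new input is Lemma~\ref{lemm:weak}: for $\pi'\in\Pi_+$ we have $J^{\pi'}=c^{\pi'}\mathbf{1}$. From this I will deduce two facts.

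First, since $P^\pi_\star$ is stochastic, $P^\pi_\star J^{\pi'} = c^{\pi'}P^\pi_\star\mathbf{1} = c^{\pi'}\mathbf{1} = J^{\pi'}$. Hence the second summand $(P^\pi_\star-I)J^{\pi'}$ is identically zero on all states, not just recurrent ones. So there is no transient contribution at all, and the $\delta^\pi_\mu$ term from Lemma~\ref{lem:pdl} disappears.

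Second, $K^{\pi'} = PJ^{\pi'} = c^{\pi'}\mathbf{1}$ is constant in $(s,a)$. Therefore $\sum_a(\pi(a\mid s)-\pi'(a\mid s))K^{\pi'}(s,a) = c^{\pi'}(1-1) = 0$, which removes $K^{\pi'}$ from the recurrent sum. This step does not even need Lemma~\ref{lem:con}.

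Combining the two facts and multiplying by $\mu^\top$ yields exactly
\[
J^\pi_\mu-J^{\pi'}_\mu=\sum_{s\in\cR}\sum_a d^\pi_\mu(s)\bigl(\pi(a\mid s)-\pi'(a\mid s)\bigr)Q^{\pi'}(s,a).
\]

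The one point needing care is that $\cR$ and $\cT$ in the statement refer to the common classification of Fact~\ref{fact:classification}, so $d^\pi_\mu$ vanishes on $\cT$ for $\pi\in\Pi_+$. This is already guaranteed by that fact. Alternatively, one can simply apply Lemma~\ref{lem:pdl} directly: its transient term contains $K^{\pi'}$, and the constancy of $K^{\pi'}$ kills that term by the same $\sum_a(\pi-\pi')=0$ argument. That alternative is the cleanest route, and I would present it as the main proof. I do not expect a genuine obstacle; the only substantive ingredient is Lemma~\ref{lemm:weak}, which is assumed.
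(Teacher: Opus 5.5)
Your proposal is correct and follows the paper's argument. The paper gets this corollary by plugging Lemma~\ref{lemm:weak} ($J^{\pi'}=c^{\pi'}\mathbf{1}$) into the proof of Lemma~\ref{lem:pdl}, so that $(P^\pi_\star-I)J^{\pi'}=0$ and the transient term disappears, which is exactly your first route; your preferred route (apply the statement of Lemma~\ref{lem:pdl} and note that the constant $K^{\pi'}=PJ^{\pi'}$ gives $\sum_a(\pi(a\mid s)-\pi'(a\mid s))K^{\pi'}(s,a)=0$) is the same one-line argument the paper uses for the unichain Corollary~\ref{cor:pdl}, and is equally valid.
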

Again, applying Lemma~\ref{lemm:weak} into the proof of Theorem \ref{thm::policy_gd}, we can directly obtain following policy gradient theorem for weakly communicating MDP.

\begin{corollary}
 Consider a weakly communicating MDP. 
 For $\pi_{\theta} \in \Pi_{+}$  and $\mu\in \mathcal{M}_+(\mathcal{S})$,
\[\nabla_{\theta}  J_\mu^{\pi_\theta} =    \sum_{s \in \cR}
\sum_{a \in \cA}
d^{\pi_\theta}_{\mu}(s) \nabla_{\theta} \pi_{\theta} (a \mid s) Q^{\pi_\theta}(s,a). \]
\end{corollary}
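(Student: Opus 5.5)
The plan is to reuse the proof of Theorem~\ref{thm::policy_gd} essentially verbatim and show that the transient contribution disappears once Lemma~\ref{lemm:weak} is available. That proof gives, for $\pi=\pi_\theta\in\Pi_+$,
\[
\nabla_{\theta}J^{\pi_\theta}_\mu=\sum_{s\in\cR}\sum_{a\in\cA} d^{\pi_\theta}_\mu(s)\nabla_\theta\pi_\theta(a\mid s)Q^{\pi_\theta}(s,a)+\sum_{s\in\cT}\sum_{a\in\cA}\delta^{\pi_\theta}_\mu(s)\nabla_\theta\pi_\theta(a\mid s)K^{\pi_\theta}(s,a).
\]
It therefore suffices to show that the second sum is zero for every $s\in\cT$.

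Lemma~\ref{lemm:weak} gives $J^{\pi_\theta}=c^{\pi_\theta}\mathbf{1}$ for every $\theta$. Since $P$ is row-stochastic, $K^{\pi_\theta}=PJ^{\pi_\theta}=c^{\pi_\theta}\mathbf{1}$ as well. Hence
\[
\sum_{a}\nabla_\theta\pi_\theta(a\mid s)\,K^{\pi_\theta}(s,a)=c^{\pi_\theta}\,\nabla_\theta\Big(\sum_a\pi_\theta(a\mid s)\Big)=c^{\pi_\theta}\nabla_\theta 1=0.
\]
This is even simpler than the product-rule argument in Corollary~\ref{cor::policy_gd}: $K^{\pi_\theta}(s,\cdot)$ is constant in $a$, so the factor $c^{\pi_\theta}$ comes out of the sum and we never need to differentiate it. Once the transient term is gone, only the recurrent sum remains, which is the claim.

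An alternative route gives the same conclusion and serves as a consistency check. Start from the weakly communicating performance difference corollary just stated, in which the transient term is already absent. Take $\pi=\pi_\theta+\epsilon u$ and $\pi'=\pi_\theta$ with $u\in\mathbf{T}\Pi_+$, divide by $\epsilon$, and let $\epsilon\to0$, using continuity of $d^\pi_\mu$ on $\Pi_+$ as in the proof of Theorem~\ref{thm::policy_gd}. The chain rule then yields the formula.

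The only delicate point is whether $\cR$ and $\cT$ in the statement agree with the classification used for the multichain theorem. In the weakly communicating case, the proof of Lemma~\ref{lemm:weak} shows that every $\pi\in\Pi_+$ induces a single recurrent class, so $m=1$. Fact~\ref{fact:classification} guarantees that this classification is the same for all $\pi\in\Pi_+$, so no additional argument is needed. Differentiability of $J^{\pi_\theta}_\mu$ is inherited from the multichain case.
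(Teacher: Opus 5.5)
Your proof is correct. Your primary argument differs mildly from the paper's, while your ``alternative route'' is exactly what the paper does. The paper inserts Lemma~\ref{lemm:weak} into the proof of Theorem~\ref{thm::policy_gd}: when $J^{\pi'}=c^{\pi'}\mathbf{1}$, the term $(P^{\pi}_\star-I)J^{\pi'}$ in the proof of Lemma~\ref{lem:pdl} vanishes. The transient term is therefore already absent from the performance difference identity (the weakly communicating corollary), and the paper then differentiates along tangent directions $u\in\mathbf{T}\Pi_+$.

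Your main route instead uses Theorem~\ref{thm::policy_gd} as a black box, since a weakly communicating MDP is multichain. You then remove the transient term after differentiation, using $K^{\pi_\theta}(s,\cdot)\equiv c^{\pi_\theta}$ and $\sum_a\nabla_\theta\pi_\theta(a\mid s)=0$.

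What each approach buys:
\begin{itemize}
\item Your route is shorter and requires no re-derivation. It also avoids the product-rule step of Corollary~\ref{cor::policy_gd}, which implicitly differentiates $c^{\pi_\theta}$.
\item The paper's route exhibits the transient-free, finite-difference performance identity along the way. That is the form used elsewhere in the weakly communicating appendix.
\end{itemize}

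The two routes are consistent. As vectors in $\mathbb{R}^{|\cS||\cA|}$, the two gradient representations differ by $\delta^{\pi}_\mu(s)\,c^{\pi}$ on transient rows. This difference is constant in $a$, so every tangent direction annihilates it, and both yield the same $\nabla_\theta J^{\pi_\theta}_\mu$.
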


\subsection{Convergence of of policy mirror ascent in weakly communicating MDPs}
If MDP is weakly communicating, we can explicitly define an iteration number for finding an $\epsilon$-optimal policy with the value of $\alpha$ chosen to be a function of $\epsilon$. 

We first present the \emph{sublinear} convergence rate of the $\alpha$-clipped policy mirror ascent with constant step sizes. 

\begin{corollary}\label{cor:proj_grad}
Consider weakly communicating MDP. For any given $\epsilon \in (0,1)$,
set $\alpha = \frac{\epsilon}{2(|\cA|+1) \infn{Q^{\pi^\star}}}  $.
Then, for $\pi_0 \in \Pi_\alpha$ and $\mu \in \cM_+(\cS)$, $\alpha$-clipped policy mirror ascent with constant step size $\eta$ generates $\epsilon$-optimal policies for $k\ge \frac{2}{\epsilon}\left(\frac{D_{\rho_\mu^{\pi_\alpha}}(\pi_\alpha,\pi_0)}{\eta}
+C_\alpha  (J^{\pi_\alpha}-J^{\pi_0})\right).$
\end{corollary}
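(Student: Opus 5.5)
The plan is to split the suboptimality as
\[
J^\star_\mu - J^{\pi_k}_\mu = \big(J^\star_\mu - J^{\pi_\alpha}_\mu\big) + \big(J^{\pi_\alpha}_\mu - J^{\pi_k}_\mu\big)
\]
and make each part at most $\epsilon/2$. The second part is handled directly by Theorem~\ref{thm:ngd}. Its bound is $\frac{1}{k+1}\big(D_{\rho_\mu^{\pi_\alpha}}(\pi_\alpha,\pi_0)/\eta + C_\alpha(J^{\pi_\alpha}_\mu-J^{\pi_0}_\mu)\big)$, which is at most $\epsilon/2$ once $k+1 \ge \frac{2}{\epsilon}(\cdots)$. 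The stated iteration threshold $k \ge \frac{2}{\epsilon}(\cdots)$ is therefore enough.

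For the first part I compare $\pi_\alpha$ with an explicit clipped version of a deterministic optimal policy. Fix an optimal policy $\pi^\star$ (deterministic, say) and define
\[
\tilde\pi(\cdot\,|\,s) = (1-|\cA|\alpha)\,\pi^\star(\cdot\,|\,s) + \alpha\mathbf{1}.
\]
This $\tilde\pi$ lies in $\Pi_\alpha$ because $\alpha<1/|\cA|$; the choice of $\alpha$ guarantees this since $\epsilon<1$, provided $\infn{Q^{\pi^\star}}$ is not tiny. If $\infn{Q^{\pi^\star}}$ is small I would simply cap $\alpha$. Since $\pi_\alpha$ maximizes over $\Pi_\alpha$, we have $J^{\pi_\alpha}_\mu \ge J^{\tilde\pi}_\mu$, so it suffices to bound $J^\star_\mu - J^{\tilde\pi}_\mu$.

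To do this I use the identity from the proof of Lemma~\ref{lem:weak}. There $J^\star$ is a constant vector, so the transient term $(P^{\tilde\pi}_\star - I)J^\star$ vanishes. Note that this uses only that $J^\star$ is uniform, so it applies even though $\pi^\star$ itself need not lie in $\Pi_+$. The identity gives
\[
J^\star_\mu - J^{\tilde\pi}_\mu = \sum_{s\in\cR}\sum_{a} d^{\tilde\pi}_\mu(s)\big(\pi^\star(a|s) - \tilde\pi(a|s)\big)Q^{\pi^\star}(s,a).
\]
The main step is to redo that derivation carefully with $\pi' = \pi^\star$, using the Bellman equations for $\pi^\star$ and $P^{\tilde\pi}_\star P^{\tilde\pi} = P^{\tilde\pi}_\star$. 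Lemma~\ref{lem:pdl} is stated for $\Pi_+$, so I would verify this derivation directly rather than cite it.

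Next I apply H\"older's inequality, using $\sum_{s} d^{\tilde\pi}_\mu(s) = 1$ and
\[
\|\pi^\star(\cdot|s) - \tilde\pi(\cdot|s)\|_1 = \alpha\,\|{-|\cA|}\pi^\star(\cdot|s) + \mathbf{1}\|_1 \le \alpha(|\cA|+|\cA|) \le 2|\cA|\alpha.
\]
A sharper count gives $2(|\cA|-1)\alpha$. This yields $J^\star_\mu - J^{\tilde\pi}_\mu \le 2|\cA|\alpha\infn{Q^{\pi^\star}}$. With $\alpha = \epsilon/(2(|\cA|+1)\infn{Q^{\pi^\star}})$ this is at most $\frac{|\cA|}{|\cA|+1}\epsilon$, which falls short of the needed $\epsilon/2$.

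I expect this constant bookkeeping to be the main obstacle. I would first try a sharper estimate: because $Q^{\pi^\star}$ enters through the centered difference $\sum_a(\pi^\star - \tilde\pi)(a|s)Q^{\pi^\star}(s,a)$, I can subtract $\min_a Q^{\pi^\star}(s,\cdot)$ or use $\sum_a \pi^\star(a|s)Q^{\pi^\star}(s,a)$ relations to tighten the factor. Failing that, I would accept a slightly different constant, for example bounding each part by $\epsilon/2$ after adjusting $\alpha$ by a factor of $|\cA|$. Combining the two halves then gives $J^\star_\mu - J^{\pi_k}_\mu \le \epsilon$ for all $k$ beyond the stated threshold.
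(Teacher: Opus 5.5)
\textbf{Verdict: genuine gap.} Your argument follows the paper's route, but it does not prove the corollary with the prescribed $\alpha$, and no sharpening of the first half can.

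\textbf{Where the gap is.} Your decomposition and the use of Theorem~\ref{thm:ngd} for $J^{\pi_\alpha}_\mu-J^{\pi_k}_\mu$ match the paper. So does the identity $J^\star_\mu-J^{\tilde\pi}_\mu=\sum_{s}d^{\tilde\pi}_\mu(s)\sum_a(\pi^\star-\tilde\pi)(a\,|\,s)\,Q^{\pi^\star}(s,a)$, which you rightly re-derive from the Bellman equations since $\pi^\star\notin\Pi_+$. The gap is the one you flagged, and it is more than bookkeeping. With the prescribed $\alpha$ you need $J^\star_\mu-J^{\pi_\alpha}_\mu\le(|\cA|+1)\alpha\infn{Q^{\pi^\star}}$. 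H\"older gives at best $2(|\cA|-1)\alpha\infn{Q^{\pi^\star}}=\frac{|\cA|-1}{|\cA|+1}\epsilon$, which is $\le\epsilon/2$ only when $|\cA|\le 3$; there your argument does close. Changing the comparison policy cannot help: for deterministic $\pi^\star$, every $\pi\in\Pi_\alpha$ has $\|\pi^\star(\cdot\,|\,s)-\pi(\cdot\,|\,s)\|_1\ge 2(|\cA|-1)\alpha$. The paper's proof asserts the bound $(|\cA|+1)\alpha\infn{Q^{\pi^\star}}$ at this step without derivation, so it has the same hole. The later corollaries justify it by a $\pi\in\Pi_\alpha$ with $\infn{\pi^\star-\pi}\le\alpha$, and no such $\pi$ exists once $|\cA|\ge 3$.

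\textbf{Why your first fallback cannot work.} A sharper centered estimate cannot repair this, because the intermediate inequality itself is false. Take two states $A,B$ and $|\cA|=10$.
At $A$, action $1$ pays $1$ and moves to $B$; the other actions pay $-2$ and stay at $A$.
At $B$, action $1$ pays $0$ and moves to $A$ with probability $0.01$ (else stays); the other actions pay $-2$ and move to $A$.
This MDP is communicating, and $\pi^\star\equiv 1$ is the unique optimal policy. One computes $Q^{\pi^\star}(A,1)=V^{\pi^\star}(A)\approx 0.98$ and $Q^{\pi^\star}(A,a)=Q^{\pi^\star}(B,a)\approx -1.03$ for $a\neq 1$, so $\infn{Q^{\pi^\star}}\approx 1.03$. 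Every clipped policy puts stationary mass at least $0.89$ on $A$, where each unit of forced probability mass costs about $2\infn{Q^{\pi^\star}}$.

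For $\alpha=0.099$, every $\pi\in\Pi_\alpha$ has $J^\star-J^\pi\ge 1.69$, whereas $(|\cA|+1)\alpha\infn{Q^{\pi^\star}}\approx 1.12$. Multiplying all rewards by $0.4$ scales both quantities by $0.4$ and gives $\epsilon=2(|\cA|+1)\alpha\infn{Q^{\pi^\star}}\approx 0.90<1$, so this $\alpha$ is exactly the prescribed one. With the stated $\alpha$, therefore, neither you nor the paper can obtain $J^\star_\mu-J^{\pi_\alpha}_\mu\le\epsilon/2$.

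\textbf{What you actually prove.} Your proposal establishes the result only through your second fallback, i.e.\ with a smaller clipping level. For example, $\alpha=\epsilon/(4|\cA|\infn{Q^{\pi^\star}})$, kept below $1/|\cA|$, makes your bound $2|\cA|\alpha\infn{Q^{\pi^\star}}$ equal to $\epsilon/2$. That is a corrected corollary, not the one stated.
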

\begin{proof}
Following proof of Lemma~\ref{lem:weak},  if $\alpha=\frac{\epsilon}{2(|\cA|+1) \infn{Q^{\pi^\star}}} $, we have 
   \begin{align*}  
J_\mu^{\star} - J_\mu^{\pi}
&=  \sum_{s \in \cS}
\sum_{a \in \cA} d^{\pi}_\mu(s)
(\pi^\star(a \mid s)-\pi(a \mid s)) 
Q^{\pi_\star}(s,a)
 \\& \le (|\cA|+1)
\alpha \infn{Q^{\pi_\star}}  
 \\& \le \frac{\epsilon}{2}.
\end{align*}
    By Theorem~\ref{thm:ngd}, when $k\ge \frac{2}{\epsilon}\left(\frac{D_{\rho_\mu^{\pi_\alpha}}(\pi_\alpha,\pi_0)}{\eta}
+ C_\alpha   (J^{\pi_\alpha}-J^{\pi_0})\right)$, $ J^{\pi_{\alpha}}_\mu-J^{\pi_k}_\mu \le \frac{\epsilon}{2}$, which concludes the proof.
\end{proof}

Next, we present the \emph{linear} convergence rate of the $\alpha$-clipped policy mirror ascent with adaptive step sizes. 

\begin{corollary}\label{cor:ngd_linear}
Consider a weakly communcating MDP.  For any given $\epsilon \in (0,1)$, 
set $\alpha = \frac{\epsilon}{2(|\cA|+1) \infn{Q^{\pi^\star}}}  $.
Then, for $\pi_0 \in \Pi_\alpha$ and $\mu\in\cM_+(\cS) $, $\alpha$-clipped policy mirror ascent with step sizes $(C_\alpha-1)\eta_{k+1}\ge C_\alpha\eta_k>0$ generates $\epsilon$-optimal policies for  
\[
k\ge \frac{
\log \left(2(J_\mu^{\pi_\alpha}- J_\mu^{\pi_{0}}
+ \frac{D_{\rho_\mu^{\pi_\alpha}}(\pi_\alpha, \pi_{0})}{\eta_0(C_\alpha-1)})/\epsilon
\right)}{ \log(C_\alpha /  (C_\alpha-1)) }
\].
\end{corollary}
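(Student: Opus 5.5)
The argument mirrors the proof of Corollary~\ref{cor:proj_grad}, with Theorem~\ref{thm:ngd_linear} replacing Theorem~\ref{thm:ngd}. We split the optimality gap as
\[
J^\star_\mu - J^{\pi_k}_\mu = \big(J^\star_\mu - J^{\pi_\alpha}_\mu\big) + \big(J^{\pi_\alpha}_\mu - J^{\pi_k}_\mu\big)
\]
and show that each term is at most $\epsilon/2$.

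\textbf{Approximation term.} We use the weakly communicating performance difference identity from the proof of Lemma~\ref{lem:weak}, which holds because $J^\star$ is a uniform vector. Applied to the policy $\pi$, it reads
\[
J^\star_\mu - J^\pi_\mu = \sum_{s\in\cR}\sum_{a\in\cA} d^\pi_\mu(s)\big(\pi_\star(a\mid s)-\pi(a\mid s)\big)Q^{\pi_\star}(s,a).
\]
Take $\pi$ to be the $\alpha$-clipped version of a deterministic optimal policy $\pi_\star$: put mass $\alpha$ on each non-optimal action and mass $1-(|\cA|-1)\alpha$ on the optimal action. Then $\pi\in\Pi_\alpha$, and $\|\pi_\star(\cdot\mid s)-\pi(\cdot\mid s)\|_1 \le 2(|\cA|-1)\alpha \le (|\cA|+1)\alpha$. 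Since $\sum_s d^\pi_\mu(s)=1$, Hölder's inequality gives
\[
J^\star_\mu - J^\pi_\mu \le (|\cA|+1)\alpha\,\infn{Q^{\pi_\star}} = \epsilon/2
\]
for the chosen $\alpha$. Because $\pi_\alpha$ maximizes $J_\mu$ over $\Pi_\alpha$, it follows that $J^\star_\mu - J^{\pi_\alpha}_\mu \le \epsilon/2$.

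\textbf{Optimization term.} Theorem~\ref{thm:ngd_linear} with the step-size condition $(C_\alpha-1)\eta_{k+1}\ge C_\alpha\eta_k$ gives
\[
J^{\pi_\alpha}_\mu - J^{\pi_k}_\mu \le \Big(1-\tfrac{1}{C_\alpha}\Big)^k A, \qquad A := J^{\pi_\alpha}_\mu - J^{\pi_0}_\mu + \frac{D_{\rho^{\pi_\alpha}_\mu}(\pi_\alpha,\pi_0)}{\eta_0(C_\alpha-1)}.
\]
We want $(1-1/C_\alpha)^k A\le\epsilon/2$. Taking logarithms, this is equivalent to $k\log\big(C_\alpha/(C_\alpha-1)\big)\ge \log(2A/\epsilon)$, which is exactly the stated lower bound on $k$. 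If $A\le\epsilon/2$ the bound holds for every $k$.

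\textbf{Technical points.} Two points need care.
\begin{itemize}
\item We need $C_\alpha>1$ so that the step-size condition and the logarithm in the bound make sense. This holds in the nontrivial case; if $C_\alpha=1$, the contraction factor is $0$ and the conclusion is immediate.
\item We need the optimality identity to be valid on $\Pi_+$, i.e. that the recurrent-class structure used in Lemma~\ref{lem:weak} applies to $\pi\in\Pi_\alpha$. This follows from Lemma~\ref{lemm:weak}, which says $J^\pi$ is constant for every $\pi\in\Pi_+$, together with the fact that $J^\star$ is a uniform vector.
\end{itemize}

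Adding the two bounds gives $J^\star_\mu - J^{\pi_k}_\mu \le \epsilon$ for all $k$ satisfying the stated bound.
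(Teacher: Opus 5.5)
\textbf{Gap: the constant in the approximation step.} Your decomposition matches the paper's proof. You split at $J^{\pi_\alpha}_\mu$, bound the approximation part with the weakly communicating performance-difference identity, H\"older, and maximality of $\pi_\alpha$, and bound the optimization part with Theorem~\ref{thm:ngd_linear}. Your factor $2$ inside the logarithm is exactly what is needed; the paper's proof writes $4$, which is only more conservative.

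The failure is in the approximation step. You correctly compute $\|\pi_\star(\cdot\mid s)-\pi(\cdot\mid s)\|_1=2(|\cA|-1)\alpha$ for the clipped policy. However, $2(|\cA|-1)\alpha\le(|\cA|+1)\alpha$ holds only when $|\cA|\le 3$. For $|\cA|\ge 4$, H\"older with the prescribed $\alpha$ yields only
\[
J^\star_\mu-J^{\pi_\alpha}_\mu\le 2(|\cA|-1)\alpha\,\infn{Q^{\pi^\star}}=\frac{|\cA|-1}{|\cA|+1}\,\epsilon ,
\]
which exceeds $\epsilon/2$. Adding the optimization bound then does not give $\epsilon$.

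A different $\pi\in\Pi_\alpha$ does not rescue this route. Every $\pi\in\Pi_\alpha$ has $\pi(a^\star\mid s)\le 1-(|\cA|-1)\alpha$, so its $\ell_1$ distance to a deterministic $\pi^\star$ is at least $2(|\cA|-1)\alpha$. Centering $Q^{\pi^\star}(s,\cdot)$ does not help either, since its span can be $2\infn{Q^{\pi^\star}}$.

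The paper's own proof has the same defect in another form. It asserts some $\pi\in\Pi_\alpha$ with $\infn{\pi^\star-\pi}\le\alpha$, which is false for a deterministic $\pi^\star$ once $|\cA|\ge 3$, by the same observation.

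\textbf{Repair.} The cleanest fix is to shrink $\alpha$, for example $\alpha=\epsilon/(4|\cA|\,\infn{Q^{\pi^\star}})$. Your clipped-policy bound then gives at most $\epsilon/2$, and the rest of your argument goes through verbatim. Keeping the stated constant $|\cA|+1$ would need an estimate sharper than this H\"older bound, and neither you nor the paper supplies one.

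\textbf{Minor point.} When $C_\alpha=1$, the step-size condition $(C_\alpha-1)\eta_{k+1}\ge C_\alpha\eta_k>0$ cannot be satisfied, so the statement is vacuous in that case. It is not that ``the contraction factor is $0$''; Theorem~\ref{thm:ngd_linear} divides by $C_\alpha-1$.
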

\begin{proof}
  Following proof of Corollary~\ref{cor:proj_grad}, let $\alpha=\frac{\epsilon}{2(|\cA|+1) \infn{Q^{\pi^\star}}} $. Then there exist $\pi \in \Pi_{\alpha}$ such that $\infn{\pi^\star-\pi} \le \alpha$. Also, we have $J_\mu^{\star} - J_\mu^{\pi}
\le \frac{\epsilon}{2}$.
    By Theorem~\ref{thm:ngd_linear}, when $K\ge\frac{
\log \left(4(J_\mu^{\pi_\alpha}- J_\mu^{\pi_{0}}
+ \frac{D_{\rho_\mu^{\pi_\alpha}}(\pi_\alpha, \pi_{0})}{\eta_0(C_\alpha-1)})/\epsilon
\right)}{ \log(C_\alpha /  (C_\alpha-1)) }$, $ J^{\pi_{\alpha}}_\mu-J^{\pi_k}_\mu \le \frac{\epsilon}{2}$, which concludes the proof.
\end{proof}
To the best of our knowledge, Corollaries~\ref{cor:proj_grad} and \ref{cor:ngd_linear} are the first convergence results of policy gradient methods for average-reward weakly communicating MDPs.
\subsection{Sample complexity of policy mirror ascent in weakly communicating MDPs}
If MDP is weakly communicating, we can explicitly provide sample complexity for finding an $\epsilon$-optimal policy. Note that  for $\pi \in \Pi_+$, $\|d^\pi_{\mu}\|_1=B_\alpha=1$ in weakly communicating MDP.

\begin{corollary}\label{sam:weak}
    Consider a weakly communicating MDP. Let $\epsilon>0, \delta>0$ and set $\alpha = \frac{\epsilon}{2(|\cA|+1) \infn{Q^{\pi^\star}}}  $. For any $\pi, \pi_0 \in \Pi_\alpha$ and given $\mu \in \cM_+(\cS)$, with probability $1-\delta$, the iterate of stochastic $\alpha$-clipped policy ascent with constant step size $\eta>0$ and $K=4\left( \frac{D_{\rho_\mu^{\pi_\alpha}}({\pi_\alpha}, \pi_0)}{\eta}
+ C_
\alpha(J_\mu^{\pi_\alpha}-J^{\pi_0}_\mu) \right)/\epsilon$ generates $\epsilon$-optimal policy with sample complexity 
\[\widetilde{\mathcal{O}}\left(\frac{ t_{\mathrm{tar}}^3 \infn{Q^\pi}^4 R^{9} C_\alpha^6\left( \frac{D_{\rho_\mu^{\pi_\alpha}}(\pi_\alpha, \pi_0)}{\eta}\right)^6|\cS||\cA|}{\epsilon^{12}}  \right)\]
and with adaptive step size $\eta_{k+1}(C_\alpha  - 1) \ge \eta_k C_\alpha >0$ and $K= \frac{
\log \left(4(J_\mu^{\pi_\alpha}- J_\mu^{\pi_{0}}
+ \frac{D_{\rho_\mu^{\pi_\alpha}}(\pi_\alpha, \pi_{0})}{\eta_0(C_\alpha-1)})/\epsilon
\right)}{ \log(C_\alpha /  (C_\alpha-1)) }$ generates $\epsilon$-optimal policy with sample complexity 
\[\tilde{O}\left( \frac{ t_{\mathrm{tar}}^3 \infn{Q^\pi}^4 R^3 C_\alpha^6 |\cS||\cA|}{ \epsilon^6}\right)\]
where $\widetilde{\mathcal{O}}$ ignores all logarithmic factors, $\infn{Q^{\pi}}= \max_{0\le k \le K-1}\infn{Q^{\pi_{k}}}$,  and $t_{\mathrm{tar}}= \max_{0\le k \le K-1}t_{\mathrm{tar}}^{\pi_{k}}$.
\end{corollary}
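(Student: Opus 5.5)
The plan is to repeat the proof of Theorem~\ref{thm:spmg}, making two changes specific to weakly communicating MDPs. The first is to set $\alpha=\frac{\epsilon}{2(|\cA|+1)\infn{Q^{\pi^\star}}}$ and use the argument of Corollary~\ref{cor:proj_grad}. That argument rests on the weakly communicating performance difference identity from the proof of Lemma~\ref{lem:weak}. It gives a policy in $\Pi_\alpha$ that is $\epsilon/2$-close to optimal. Hence $J^\star_\mu-J^{\pi_\alpha}_\mu\le \epsilon/2$, because $\pi_\alpha$ maximizes $J_\mu$ over $\Pi_\alpha$. The whole task is then to make $J^{\pi_\alpha}_\mu-J^{\pi_K}_\mu\le\epsilon/2$ with probability $1-\delta$, which is Theorem~\ref{thm:spmg} run with accuracy $\epsilon/2$.

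The second change is to set $B_\alpha=1$. By Lemma~\ref{lemm:weak}, $J^{\pi}=c^\pi\mathbf 1$ for every $\pi\in\Pi_+$, so $K^\pi=PJ^\pi$ is constant. In the weakly communicating corollaries the transient term therefore drops out, and only $d^\pi_\mu$ is used as the weight, with $\|d^\pi_\mu\|_1=1$. I redo Lemma~\ref{inexact:mon} and Theorem~\ref{thm:inexact} with this weight: the Hölder steps then cost $2\epsilon_0$ instead of $2B_\alpha\epsilon_0$, where $\epsilon_0$ is the critic accuracy. Transient states need no special handling, since $\sum_a(\pi(a|s)-\pi'(a|s))K^{\pi'}(s,a)=0$ when $K^{\pi'}$ is constant. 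In particular the transient part of $\hat G$ has no effect on the performance difference, and the classification step of Lemma~\ref{lem:class} is not needed.

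For constant step size, take $K=4(\frac{D}{\eta}+C_\alpha(J^{\pi_\alpha}_\mu-J^{\pi_0}_\mu))/\epsilon$. The deterministic term in Theorem~\ref{thm:inexact} is then at most $\epsilon/4$. Require the error term $(2C_\alpha+K+2)\epsilon_0\le\epsilon/4$, i.e. $\epsilon_0\asymp \epsilon/(C_\alpha+K)$. Apply Theorem~\ref{thm:PE} at each iteration with confidence $\delta/K$ and take a union bound. The per-iteration cost is $\mathcal O(R^3t_{\mathrm{tar}}^3\infn{Q^\pi}^4|\cS||\cA|\epsilon_0^{-6})$. With $K\sim C_\alpha D/(\eta\epsilon)$, one gets $\epsilon_0^{-6}\sim (C_\alpha D/\eta)^6\epsilon^{-12}$ up to powers of $R$ coming from $J^{\pi_\alpha}-J^{\pi_0}\le 2R$. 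Multiplying by $K$ and absorbing that factor as in Theorem~\ref{thm:spmg}, the total matches the stated bound with $B_\alpha^6$ removed. For adaptive steps, take $K$ logarithmic as stated and $\epsilon_0=\epsilon/(16C_\alpha)$, so that the $4C_\alpha\epsilon_0$ error is at most $\epsilon/4$. This yields $\tilde O(t^3_{\mathrm{tar}}\infn{Q^\pi}^4R^3C_\alpha^6|\cS||\cA|/\epsilon^6)$, with $K$ absorbed into the logarithmic factors.

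The main obstacle is justifying $B_\alpha=1$ rigorously. I must check that the chain of inequalities in Theorem~\ref{thm:inexact} still goes through with the weight $d^\pi_\mu$ in place of $\rho^\pi_\mu$. That requires the three-point inequality $(*)$ to be weighted only over $\cR$. The ratio bound $C_\alpha$ is then taken over recurrent states only, where $d^\pi_\mu>0$ by Lemma~\ref{lem:bound}.
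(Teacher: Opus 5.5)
Your route matches the paper's. You choose $\alpha$ so that $J^\star_\mu-J^{\pi_\alpha}_\mu\le\epsilon/2$ by the argument of Corollary~\ref{cor:proj_grad}, then rerun Theorem~\ref{thm:spmg} at accuracy $\epsilon/2$ with $B_\alpha$ replaced by $1$. Your justification of that replacement is more careful than the paper's. You weight the three-point inequality only over $\cR$ by $d^{\pi_\alpha}_\mu$, use that $K^\pi$ is constant so the transient terms vanish, and observe that $D_{d^{\pi_\alpha}_\mu}(\pi_\alpha,\pi_0)\le D_{\rho^{\pi_\alpha}_\mu}(\pi_\alpha,\pi_0)$ and that $d^{\pi_\alpha}_\mu(s)/d^{\pi_{k+1}}_\mu(s)\le C_\alpha$ on $\cR$. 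The paper only asserts $\|d^\pi_\mu\|_1=B_\alpha=1$, although $B_\alpha$ was defined through $\rho^\pi_\mu$.

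The step that fails is the final count, where you multiply by $K$ and then absorb it. With constant step size, Theorem~\ref{thm:inexact} forces $\epsilon_0\le \epsilon/(4(2C_\alpha+K+2))$. Set $X=\frac{D_{\rho_\mu^{\pi_\alpha}}(\pi_\alpha,\pi_0)}{\eta}+C_\alpha(J^{\pi_\alpha}_\mu-J^{\pi_0}_\mu)$ and $K=4X/\epsilon$. Then a single critic call already costs at least of order $R^3t_{\mathrm{tar}}^3\infn{Q^\pi}^4|\cS||\cA|\,X^6/\epsilon^{12}$. Up to how $X$ is bounded in terms of $R$, $C_\alpha$ and $D/\eta$, this is exactly the expression in the statement. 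Running $K$ such calls gives a total of order $X^7/\epsilon^{13}$. $K$ is polynomial in $1/\epsilon$, $C_\alpha$ and $1/\eta$, not a logarithmic factor, so it cannot be hidden in $\widetilde{\mathcal{O}}$.

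The adaptive case has the same problem in milder form. There $1/\log(C_\alpha/(C_\alpha-1))\ge C_\alpha-1$, so $K$ carries a factor of $C_\alpha$, and the honest total has $C_\alpha^7$ rather than $C_\alpha^6$.

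The paper's proof of Theorem~\ref{thm:spmg}, which the corollary merely invokes, makes the same omission. It reports what is essentially the per-iteration cost as the total, and it treats $K$ as logarithmic in the adaptive case. So appealing to it does not close the gap. What your argument actually establishes is the corollary with $\epsilon^{-13}$ and a seventh power of $X$ (respectively $C_\alpha^7$). Obtaining the stated exponents would need a sharper inexact analysis whose error term does not grow linearly in $k$, and neither your proposal nor the paper provides one.
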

\begin{proof}
  Following proof of Corollary~\ref{cor:proj_grad},
   let $\alpha=\frac{\epsilon}{2(|\cA|+1) \infn{Q^{\pi^\star}}} $. Then there exist $\pi \in \Pi_{\alpha}$ such that $\infn{\pi^\star-\pi} \le \alpha$. Also, we have $J_\mu^{\star} - J_\mu^{\pi}
\le \frac{\epsilon}{2}$.

First, for constant step size, by Theorem~\ref{thm:spmg}, $K= 4\left( \frac{D_{\rho_\mu^{\pi_\alpha}}({\pi_\alpha}, \pi_0)}{\eta}
+ C_
\alpha(J^{\pi_\alpha}_{\mu}-J^{\pi_0}_{\mu})\right) /{\epsilon}$, $ J^{\pi_{\alpha}}_\mu-J^{\pi_k}_\mu \le \frac{\epsilon}{2}$ with sample complexity 
 \begin{align*}
    &K(HN+H'N')|\cS||\cA|=\\
& \mathcal{O}\left(\left(\frac{ t_{\mathrm{tar}}^3 \infn{Q^\pi}^4 R^{9} C_\alpha^6\left( \frac{D_{\rho_\mu^{\pi_\alpha}}(\pi_\alpha, \pi_0)}{\eta}\right)^6 |\cS||\cA|}{\epsilon^{12}}  \right)\log\left(\frac{2K|\cS||\cA|}{\delta}\right)\right).\end{align*} 
Then,  we have $J_\mu^{\star}-J_\mu^{\pi_k}\le \frac{\epsilon}{2}+\frac{\epsilon}{2} \le \epsilon.$ 

For adaptive step size $\eta_{k+1}(C_\alpha  - 1) \ge \eta_k C_\alpha $ with $K = \frac{
\log \left(4(J_\mu^{\pi_\alpha}- J_\mu^{\pi_{0}}
+ \frac{D_{\rho_\mu^{\pi_\alpha}}(\pi_\alpha, \pi_{0})}{\eta_0(C_\alpha-1)})/\epsilon
\right)}{ \log(C_\alpha /  (C_\alpha-1)) }$ with sample complexity 
 \begin{align*}
    &K(HN+H'N')|\cS||\cA|=\left( \frac{R^3C_\alpha^6 t_{\mathrm{tar}}^3\infn{Q^\pi}^4 |\cS||\cA|}{ \epsilon^6}K\log\left(\frac{2K|\cS||\cA|}{\delta}\right)\right).\end{align*} 
Then,  we have $J_\mu^{\star}-J_\mu^{\pi_k}\le \frac{\epsilon}{2}+\frac{\epsilon}{2} \le \epsilon.$ 
\end{proof}

To the best of our knowledge, Corollary~\ref{sam:weak} is the first sample complexity results of policy gradient methods for average-reward weakly communicating MDPs.

\section{Projection algorithms}\label{appen:proj}

\begin{algorithm}[H]
\caption{Euclidean projection onto $\cM_\alpha (\cA)$}
\label{alg:euclid_K_floor}
\begin{algorithmic}
\STATE \textbf{Input:} $q\in\mathbb R^d$, $\alpha\in[0,1/d]$
\STATE Sort $q$ into $q'$ satisfying $q'_1 \ge q'_2 \ge \cdots \ge q'_d$
\STATE Find $
\rho = \max\Bigl\{1 \le j \le d:\; q'_j + \frac{1}{j}\Bigl(1-d\alpha -\sum_{i=1}^{j} q'_i\Bigr) > 0 \Bigr\}$
\STATE $\lambda=\frac{1}{\rho}(1-d\alpha -\sum_{i=1}^{\rho} q'_i\Bigr) $ 
\FOR{$i=1,\ldots,d$}
    \STATE  $p_i=\max\{q_i+\lambda+\alpha,\alpha\}$
\ENDFOR
\STATE \textbf{Output:} $p$
\end{algorithmic}
\end{algorithm}

\vspace{-0.1in}
\begin{algorithm}[H]
\caption{KL projection onto $\cM_\alpha (\cA)$}
\label{alg:kl_proj_lower_bounded_simplex_median}
\begin{algorithmic}
\STATE \textbf{Input:}  $w\in \cM_+(\cS)$  where $|\cS|=d$, $\alpha\in[0,1/d]$
\STATE Set $W = \{1,\ldots,d\}$, $C_{\#}= 0$,  $C_{\%} = 0$
\WHILE{$W \neq \emptyset$}
    \STATE Find median $\omega$ in $\{w_i :i\in W \}$
    \STATE $L = \{i\in W : w_i < \omega\}$,\quad
           $L_{\#} = |L|$,\quad
           $L_{\%} \gets \sum_{i\in L} w_i$
    \STATE $M =\{i\in W : w_i = \omega\}$,\quad
           $M_{\#} =|M|$,\quad
           $M_{\%} =\sum_{i\in M} w_i$
    \STATE $H =\{i\in W : w_i > \omega\}$
    \STATE $m_0 =\dfrac{1-(C_{\#}+L_{\#})\alpha}{1-(C_{\%}+L_{\%})}$
    \IF{$\omega\, m_0 < \alpha$}
        \STATE $C_{\#} =C_{\#}+L_{\#}+M_{\#}$
        \STATE $C_{\%} =C_{\%}+L_{\%}+M_{\%}$
        \STATE $W =H$
    \ELSE
        \STATE $W =L$
    \ENDIF
\ENDWHILE
\STATE $m_0 =\dfrac{1-C_{\#}\alpha}{1-C_{\%}}$
\FOR{$i=1,\ldots,d$}
    \IF{$w_i < \omega$}
        \STATE $p_i =\alpha$
    \ELSE
        \STATE $p_i =m_0\, w_i$
    \ENDIF
\ENDFOR
\STATE \textbf{Output:} $p$
\end{algorithmic}
\end{algorithm}
\end{document}